\numberwithin{equation}{subsection}
\numberwithin{figure}{subsection}
\numberwithin{table}{subsection}
\newaliascnt{thm}{equation}
\newtheorem{thm}[thm]{Theorem}
\Crefname{thm}{Theorem}{Theorems}
\Crefname{thmA}{Theorem}{Theorems}
\newaliascnt{lemma}{equation}
\newtheorem{lemma}[lemma]{Lemma}
\Crefname{lemma}{Lemma}{Lemmas}
\newaliascnt{prop}{equation}
\newtheorem{prop}[prop]{Proposition}
\Crefname{prop}{Proposition}{Propositions}
\newaliascnt{cor}{equation}
\Crefname{cor}{Corollary}{Corollaries}
\theoremstyle{definition}
\newaliascnt{rem}{equation}
\Crefname{rem}{Remark}{Remarks}
\newaliascnt{example}{equation}
\Crefname{example}{Example}{Examples}
\newaliascnt{definition}{equation}
\newtheorem{definition}[definition]{Definition}
\Crefname{definition}{Definition}{Definitions}
\newaliascnt{question}{equation}
\Crefname{question}{Question}{Questions}
\newaliascnt{notation}{equation}
\newtheorem{notation}[notation]{Notation}
\Crefname{notation}{Notation}{Notations}
\newaliascnt{block}{equation}
\newtheorem{block}[block]{}
\Crefname{block}{Block}{Blocks}
\newaliascnt{remark}{equation}
\newtheorem{remark}[remark]{Remark}
\Crefname{remark}{Remark}{Remarks}
\crefname{equation}{eq.}{eqs.}
\Crefname{figure}{Fig.}{Fig.}
\crefname{table}{Table}{Tables}
\crefname{subsection}{subsection}{subsections}
\Crefname{subsection}{Subsection}{Subsections}
\newcommand{\C}{\mathbb{C}}
\newcommand{\Z}{\mathbb{Z}}
\newcommand{\R}{\mathbb{R}}
\newcommand{\W}{\mathcal{W}}
\newcommand{\V}{\mathcal{V}}
\newcommand{\A}{\mathcal{A}}
\newcommand{\E}{\mathcal{E}}
\renewcommand{\O}{\mathcal{O}}
\newcommand{\m}{\mathfrak{m}}
\renewcommand{\Im}{\mathop{\mathrm{Im}}}
\newcommand{\sign}{\mathop{\mathrm{sign}}}
\newcommand{\Hom}{\mathop{\rm Hom}\nolimits}
\renewcommand{\mod}{\mathop{\rm mod}\nolimits}
\newcommand{\Yro}{Y^{\mathrm{ro}}}
\newcommand{\Sinv}{S^{\mathrm{inv}}}
\newcommand{\Ainv}{A^{\mathrm{inv}}}
\newcommand{\Siinv}{\Sigma^{\mathrm{inv}}}
\newcommand{\xiinv}{\xi^{\mathrm{inv}}}
\newcommand{\Uro}{U^{\mathrm{ro}}}
\newcommand{\tU}{\tilde{U}}
\newcommand{\tu}{\tilde{u}}
\newcommand{\tv}{\tilde{v}}
\newcommand{\talpha}{\tilde{\alpha}}
\newcommand{\tbeta}{\tilde{\beta}}
\newcommand{\tr}{\tilde{r}}
\newcommand{\ts}{\tilde{s}}
\newcommand{\xiro}{\xi^{\mathrm{ro}}}
\newcommand{\fro}{f^{\mathrm{ro}}}
\newcommand{\Dro}{D^{\mathrm{ro}}}
\newcommand{\Droc}{D^{\mathrm{ro}, \circ}}
\newcommand{\Tu}{{\mathrm{Tub}}}
\newcommand{\hot}{{\mathrm{h.o.t.}}}
\newcommand{\lot}{{\mathrm{l.o.t.}}}
\newcommand{\Zmax}{Z_{\mathrm{max}}}
\newcommand{\Var}{\mathop{\rm Var}\nolimits}
\newcommand{\ord}{\mathrm{ord}}
\newcommand{\set}[2]{\left\{ #1 \,\middle\vert\, #2 \right\}}
\newcommand{\gen}[2]{\left\langle #1 \,\middle|\, #2 \right\rangle}
\renewcommand{\epsilon}{\varepsilon}
\newcommand{\Rep}{\mathcal{R}}
\newcommand{\Blro}{\textrm{Bl}^{\mathrm{ro}}}
\newcounter{dummy}
\renewcommand{\thedummy}{\roman{dummy}}
\title{On the integral variation map of isolated plane curve singularities}
\author{Pablo Portilla Cuadrado\thanks{The first author is supported by 
RYC2022-035158-I, funded by MCIN/AEI/10.13039/501100011033 and by the FSE+}	 
\and Baldur 
		Sigur{\dh}sson\thanks{The second
	 	author was partly supported by Contratos María Zambrano para la 
	 	atracción de talento
	 	internacional at Universidad Complutense de  Madrid and
	 	Universidad Politécnica de Madrid and the
Spanish grant of MCIN
(PID2020-114750GB-C32/AEI/10.13039/501100011033).}}
\date{\today}
\begin{document}

	\maketitle

\begin{abstract}
	The integral variation map and algebraic monodromy of isolated plane curve 
	singularities are important homological invariants of the singularity which 
	are still far from being completely understood. This work provides 
	effective ways of computing them with respect to an explicit geometric 
	basis of the homology. 
	
	For any given topological type of plane curve singularity, we construct
	an analytic model of it, along with a vector field on 
	our version of its A'Campo space. This 
	vector field is tangent to the Milnor fibers at radius zero and the union 
	of the stable manifolds of their singularities yields a spine of
    each fiber, which can be described explicitly.
    This is very much inspired by a recent work of the authors.
	
	Our first main contribution is the algorithmic computation of the algebraic 
	monodromy and integral variation map as matrices with explicit bases for 
	any Milnor fiber in the Milnor fibration, not 
	merely congruence classes. 
	 For our second contribution, we introduce gyrographs which are graphs 
	 equipped with angular data and 
	rational weights.
    We prove that the invariant spine naturally carries 
	a gyrograph structure were the weights are given by the Hironaka 
	numbers, and that this structure recovers
    the geometric monodromy as a homotopy class,
    as well as the integral variation map.
	This provides a combinatorial framework for computation by hand.
    Our methods are further implemented in a  publicly available
    computer program written in Python.
\end{abstract}

	\tableofcontents

	\section{Introduction}

The topological type of an isolated plane curve
singularity $(C,0) \subset (\C^2,0)$,
defined by a germ $f \in \C\{x,y\}$ can be encoded in the minimal
resolution graph. The topological information can be also fully recovered from 
the Milnor fibration, that is, from the locally trivial fibration
\[f^{-1}(\partial D_\delta) \cap B_\epsilon \to \partial D_\delta\]
given by the restriction of $f$ to the preimage of a small circle intersected 
with a small enough sphere, with $0 < \delta <<\epsilon$. Its fiber $F$ 
is called the Milnor fiber, and its characteristic mapping class 
\[
  G:F \to F
\] 
is 
called the {\em geometric monodromy}.
Its induced action at the homological level is the algebraic monodromy 
associated with 
the Milnor fibration
\[
  G_1: H_1(F; \Z) \to H_1(F; \Z).
\]
The singularity being isolated allows one to consider a well defined geometric 
monodromy which acts by the identity on the boundary $\partial F$ on the Milnor 
fiber which, in turn, allows for the definition of the variation map with 
integral coefficients
\begin{align*}
	\Var:H_1(F, \partial F; \Z) &\to H_1(F;\Z) \\
	[a] &\mapsto \left[G(a)-a\right]
\end{align*}
where $a$ is any relative cycle representing $[a]$.

Both the algebraic monodromy and the variation map are homological algebraic 
invariants of the embedded topological type of a
plane curve.
The variation map is equivalent to the Seifert form, and it
determines the algebraic monodromy.
There does not exist a satisfactory classification for these
algebraic objects. 
Indeed, there is no good generalization of the (generalized) Jordan
canonical form of a linear endomorphism on a finitely generated
free $\Z$-module,
nor is there any classification of integral bilinear (and non-symmetric)
forms which encode the variation map.
This work builds towards the objective of better understanding these objects.

Results in this direction include
Selling-reduction \cite{selling_binary} which was used by
Kaenders to prove that the Seifert form determines the intersection
numbers between branches of a curve singularity \cite{kaenders_seifert}.
Also, explicit description of the Seifert form is obtained in 
\cite{EisNeu} for a class of links including algebraic ones,
as well as by A'Campo
\cite{acampo_groupe_i,acampo_groupe_ii,ACampo_def,Acam_gord}
and Gusein-Zade \cite{GZ_int} using divides.

It is known that the algebraic monodromy determines the number of branches,
and that in the case of one branch, the algebraic monodromy, in fact,
its characteristic polynomial, determines its topological type fully.
This is no longer the case for two or more branches.

Examples of pairs of plane curve singularities having the same algebraic 
monodromy but distinct topological
types, are given in \cite{Grima_mon_rat}, using rational coefficients, and
in \cite{Mich_Web_sur} using integral coefficients.
In \cite{Bois_Mich_Seif}, du Bois and Michel describe pairs of plane curves
which are topologically distinct, but have the same Seifert form.
In particular, the integral variation map does not distinguish them.
Deep results in the direction of classifying these homological
invariants include
\cite{Mich_Web_sur, Mich_Web_Jor,Bois_Robin,Bois_Mich_filt}.

In order to investigate these homological
invariants of a plane curve singularity, the algebraic monodromy and
the variation map, we construct explicitly a suitable
analytic model of the topological type, as well as
a spine in its \emph{invariant A'Campo space}. In  \cite{PSspine} we already 
constructed a spine on the Milnor fibers at radius $0$ as the union of the 
stable manifolds associated with a suitable vector field that arises  as a 
limit-rescaled version of the complex gradient of $f$. The spine in this work 
is a combination of
A'Campo's classical construction of the Milnor fibration at radius
zero by real oriented blow up, and interpolating pieces described in
\cite{ACampo}, and our work \cite{PSspine}. Here \emph{invariant}
means collapsing certain disks in each invariant Milnor fiber at radius zero.
The \emph{invariant spine} in this realization of the Milnor fibration
consists of an embedded graph in each Milnor fiber.
Vertices and edges of this graph are singularities and trajectories of
a specific vector field on the invariant Milnor fiber which is similar to that 
obtained in \cite{PSspine} but slightly modified for computational convenience.
This way we get to one of the {\em main contributions} of the present work: for 
an arbitrary
topological type, we compute explicitly the action of the monodromy of
the chain complex associated with a finite graph,
as well as a map from the dual chain complex
which represents the variation map, with integral coefficients. Our methods are 
effective and produce, not just congruence classes of matrices but an explicit 
description of the basis in which these matrices are computed.

Our methods differ from the existing literature in that the calculation
of integral matrices representing the algebraic monodromy and variation
map are done at the level of the chain complex, obtained by representing
the Milnor fiber as a handle body, with $\mu + m_0 - 1$ handles of
index $1$, and $m_0$ handles of index zero, where $m_0$ is the multiplicity.
Handle body decompositions of this type, and reductions of
them by handle cancellations, have appeared in the algebraic
and topological study of hypersurface singularities
in \cite{le_topological_use,le_perron,Tei,bko}.

This handle body decomposition is strongly related to
\emph{L\^e polyhedra} \cite{le_poly,le_menegon_poly},
which inspired our construction of the spine.
They cannot, however, be
defined over the whole base space of the Milnor fibration, as this
would imply periodicity for the monodromy.
The spine, in contrast, is well defined for every angle, at the cost
of not giving a locally trivial family. As a result, however, this
allows us to study the monodromy acting directly on the cells in the
spine.

These results are used in two ways.
Firstly, they are implemented in computer code, written in
Python, and relying on Singular for calculating resolutions, which allows for 
fast computation of the integral monodromy and variation map. This code is 
publicly available in a GitHub repository in \cite{PS_code}.
Secondly, we introduce the notion of a \emph{gyrograph}, which is an extra
structure on a graph that can be understood as a version of 
t\^ete-\`a-t\^ete graphs (see \cite{AC_tat} for the original definition).
This structure recovers completely the algebraic monodromy and
variation map, with integral coefficients.
Rather than introducing a metric on each edge, this structure requires
angles between adjacent edges at certain vertices of the graph, as well
as a filtration, by putting rational weights on each vertex.
The invariant spine introduced 
here is 
naturally
endowed with the structure of a gyrograph,
the weights being given essentially by the \emph{Hironaka numbers},
introduced in \cite{LMW_hir}, which are
ratios of divisorial valuations of the defining equation of the plane curve,
and a generic linear function. This result is materialized in 
\Cref{thm:gyro}. Therefore, this approach further underlines
the importance of the role of Hironaka numbers in the investigation
of the monodromy of a plane curve.
The gyrograph construction makes feasible the calculation of the
algebraic monodromy and variation map, with integral coefficients, by hand.
	
\subsubsection*{Outline of the paper}

The paper is structured to go from the purely combinatorial data of the 
resolution graph to the analytic geometry of the singularity, and finally to 
the algebraic computation of its invariants. We begin in \Cref{s:preparation} 
by establishing the requisite combinatorial framework. We define the resolution 
graph $\Gamma$ not merely as a topological object, but as a directed graph 
rooted at the blow-up of the origin. We introduce the notion of a ``nice'' 
total ordering on the branches of the curve, which allows us to induce a unique 
total order on the vertices of the graph. This ordering is crucial for the 
subsequent consistent labeling of charts and gluing maps.

In \Cref{s:analytic_model}, we translate this combinatorial data into a 
concrete geometric object. We construct a complex manifold $Y$ by 
gluing local analytic charts $(U_i, \tilde{U}_i)$ according to the directed 
edges and labeling functions defined previously. Within this analytic model, we 
define the function $f$ and a regular sequence of parameters, 
ensuring that the divisor structure matches the resolution graph. This 
construction provides the local coordinates necessary for our later 
dynamical analysis.

The topological realization of the Milnor fibration is developed in 
\Cref{ss:invariant_milnor}. We perform the real oriented blow-up of our 
analytic model to obtain a manifold with corners. A central object of our 
study, the \emph{invariant A'Campo space} $\Ainv$, is constructed here as a 
union of Seifert-fibered spaces over the real blow-up of the exceptional 
divisors, connected by interpolating thickened tori. This space serves as the 
domain for the geometric monodromy and the support for the vector field that 
drives our homological calculations.

The dynamical core of the paper is  in \Cref{ss:flow_description} and 
\Cref{s:dynamics_vector_field}. We define a horizontal vector field $\xiinv$ on 
$\Ainv$ by lifting  gradient fields defined on the exceptional 
divisors. We provide a detailed local analysis of this vector field, 
characterizing its singular locus, which consists of non-degenerate saddle 
points and specific multipronged singularities arising from ``dead branches'' 
(bamboos ending at valency $1$ vertices) of 
the resolution graph. This analysis follows closely that of \cite{PSspine}. A 
key analytic result is established in 
\Cref{thm:finite_nongeneric}, where we prove that the set of ``non-generic'' 
angles (those values in the circle $\R/2\pi\Z$ for which saddle connections 
occur) is finite, thereby ensuring the stability of our topological models for 
generic angles. 

In \Cref{s:mon_var}, we transition from dynamics to algebra. We construct a 
$1$-dimensional chain complex $(C_\theta, d_\theta)$ generated by the stable 
manifolds of the vector field, and a dual complex $C^\vee_\theta$ generated by 
the unstable manifolds. We prove that these are quasi-isomorphic to the 
absolute and relative singular chain complexes of the Milnor fiber, 
respectively. This identification allows us to define the algebraic 
monodromy matrix $B_\theta$ (at the level of the chain complexes) explicitly in 
terms of the intersection numbers of 
the flow trajectories on the invariant A'Campo space.

Building on this algebraic framework, \Cref{s:variation_operator} focuses on 
the variation operator. We define a linear map $V$ at the chain level, acting 
from the dual complex to the primal complex. We prove that this operator 
induces the classical variation map on homology, providing a bridge between our 
Morse-theoretic construction and classical singularity theory.

On \Cref{s:gyrographs} introduces the theory of ``gyrographs'' as a 
computational application of our results.  We show that the invariant spine of 
the fiber, when equipped with weights derived from the Hironaka numbers of the 
resolution, forms a gyrograph (\Cref{thm:gyro}).

Finally, on \Cref{s:software} we explain briefly how to use the implemented 
software \cite{PS_code} that realizes the computation of algebraic monodromy 
and variation 
operator.
	
	\section{Preparation} \label{s:preparation}
	
	Let $(C,0)\subset (\C^2,0)$ be a plane curve defined by a germ $f \in
	\O_{\C^2,0}$.

	\subsection{Initial data} \label{ss:intial_data}
	
	We take as initial data the resolution graph $\Gamma$ of a plane curve
	 $(C,0) \subset (\C^2, 0)$ together with a total order of the branches 
	 $C_1, \ldots, C_r$.
	
	Let $\pi_0:(Y_0, D_0) \to (\C^2, 0)$ be the blow up of  $\C^2$ at the 
	origin. And let $\tilde{C}$ be the strict transform of $C$ in $Y_0$.  Let 
	$t$ be the number of distinct tangents of 
	$(C,0)$. In 
	this way, $\tilde{C}$ intersects $D_0$ in $t$ points $\{p_1, \ldots, 
	p_t\}$ and, since $Y_0$ is smooth, each germ $(\tilde{C},p_\ell)$ with
	$\ell \in \{1,\ldots,t\}$ is a plane curve whose branches correspond to
	a subset of the branches of $(C,0)$.

	\begin{notation}\label{not:dual_graph_rewritten}
		Let $Y \to \C^2$ be a resolution of the plane curve defined by $f$. We 
		define the 
		associated dual graph $\Gamma$.
		
		\paragraph{Vertices of $\Gamma$}
		The set of vertices of $\Gamma$, denoted $\W$, is the disjoint union of 
		two sets: $\W = \V \cup \A$.
		\begin{itemize}
			\item The first set is $\V = \{0, 1, \ldots, s\}$.
			\item The second set, $\A$, is in bijection with the set of 
			irreducible branches of the curve $(C,0)$. We write this curve as 
			the union of its branches: $(C,0) = \bigcup_{a\in \A} (C_a,0)$.
		\end{itemize}
		
		\paragraph{Divisors and Edges of $\Gamma$}
		Each vertex $w \in \W$ corresponds to a divisor $D_w$ in $Y$.
		For the vertices $a \in \A$, the divisor $D_a$ is the 
		strict transform of the corresponding branch $C_a$ in $Y$.
		
		The edges of $\Gamma$ are defined by the intersection of these 
		divisors. An edge $ij$ connects two distinct vertices $i, j \in \W$ if 
		and only if their associated divisors intersect: $D_i \cap D_j \neq 
		\emptyset$.
		In the case of such an intersection, we denote the intersection point 
		as $p_{ij} = D_i \cap D_j$.
		
		We denote by $\W_i$ the set of {\em neighbor vertices} of the vertex 
		$i$, that is, the vertices that are adjacent to $i$.
		\paragraph{Associated Divisor Sets}
		We define several collections of divisors.
		\begin{itemize}
			\item $D_\V = \bigcup_{i \in \V} D_i$, which we also denote as $D$.
			\item $D_\A = \bigcup_{a \in \A} D_a$, the union of all strict 
			transforms.
			\item $D_\W = \bigcup_{w \in \W} D_w = D_\V \cup D_\A$, the total 
			divisor associated with $\Gamma$.
		\end{itemize}
		For a single divisor $D_i$ (where $i \in \W$), we also define the 
		subset $D_i^\circ$ which consists of the points in $D_i$ that do not 
		lie on any other divisor $D_j$ (for $j \neq i$):
		\[
		D_i^\circ = D_i \setminus \bigcup_{j\in \W\setminus \{i\}} D_j.
		\]

		\paragraph{Graph Properties}
		The graph $\Gamma$ is a tree, as 
		established in \cite[Section 3.6]{Wall_ctc}.
		A direct consequence of $\Gamma$ being a tree is that 
		for any two vertices $j, k$ in its vertex set, there 
		exists a unique shortest path connecting them. This unique path is also 
		the {\em 
		geodesic}, meaning it is the path with the smallest possible number of 
		edges.
	\end{notation}
	
	\subsubsection*{The dual graph as a directed graph}
	
	We endow $\Gamma$ with the structure of a directed graph and recall (see 
	\cite[Section 3]{PSspine}) the 
	definition of the maximal cycle.
	
	\begin{definition} \label{def:max_cycle}
		Denote by $\Zmax$ the
		\index{maximal cycle}
		\index{$c_{0,i}$}
		\emph{maximal cycle}
		in $Y$, and denote by
		$c_{0,i}$ its coefficients. Thus,
		\[
		\Zmax = \sum_{i\in \V} c_{0,i} D_i.
		\]
		and we have $c_{0,i} = \ord_{D_i}(\ell)$, where $\ell:\C^2\to\C$ is a 
		generic linear function. Here {\em generic} means that the strict 
		transform of $\{\ell=0\}$ intersects $D_0^\circ$, or equivalently, that 
		$\{\ell=0\}$ is not a tangent of $C$ at the origin.
		
		Since $c_{0,i} = \ord_{D_i}(\ell)$ we can similarly define these 
		numbers for arrowheads, that is for $i \in \A$. In this case $c_{0,i}= 
		\ord_{D_i}(\ell) = 0$.
	\end{definition}

	\begin{definition} \label{def:directed}
		The graph $\Gamma$ is seen as a directed graph as follows.
		Let $i,j$ be neighbors in $\Gamma$. The edge $ji$ is directed from
		$j$ to $i$ if and only if $i$ is further 
		from $0$ than $j$.
		
		A vertex $u \in \W_\Gamma$ is called an {\em ancestor} of a vertex 
		$v \in \W_\Gamma$ if there exists a directed path from $u$ to $v$.
		
		We denote by $\V_i^+$ the neighbors $k \in \V_i$ with $i \to k$.
	\end{definition}
	
\begin{definition} \label{def:Ga_branch}
	Let $i$ be a vertex in the directed graph $\Gamma$, as above. If $i\neq0$, 
	let $j$ be the unique neighbor of $i$ such that $j \to i$.
	The \emph{branch} (\cref{fig:branch}) 
	of $\Gamma$ at $i$ is 
	\begin{enumerate}
		\item $\Gamma$, if $i = 0$ and,
		\item otherwise, the connected component of $\Gamma$ with the edge $ji$ 
		removed, containing $i$.
	\end{enumerate}
The \emph{branch} of $\Gamma$ at $i$ is denoted by $\Gamma[i]$. We denote by 
$\A[i]$ the set of arrowheads in $\Gamma[i]$. We say that a branch is {\em 
dead} if it contains no arrowheads in it.
\end{definition}

\begin{figure}
	\centering
	\footnotesize
	\resizebox{0.4\textwidth}{!}{
\begingroup%
  \makeatletter%
  \providecommand\color[2][]{%
    \errmessage{(Inkscape) Color is used for the text in Inkscape, but the package 'color.sty' is not loaded}%
    \renewcommand\color[2][]{}%
  }%
  \providecommand\transparent[1]{%
    \errmessage{(Inkscape) Transparency is used (non-zero) for the text in Inkscape, but the package 'transparent.sty' is not loaded}%
    \renewcommand\transparent[1]{}%
  }%
  \providecommand\rotatebox[2]{#2}%
  \newcommand*\fsize{\dimexpr\f@size pt\relax}%
  \newcommand*\lineheight[1]{\fontsize{\fsize}{#1\fsize}\selectfont}%
  \ifx\svgwidth\undefined%
    \setlength{\unitlength}{84.95487748bp}%
    \ifx\svgscale\undefined%
      \relax%
    \else%
      \setlength{\unitlength}{\unitlength * \real{\svgscale}}%
    \fi%
  \else%
    \setlength{\unitlength}{\svgwidth}%
  \fi%
  \global\let\svgwidth\undefined%
  \global\let\svgscale\undefined%
  \makeatother%
  \begin{picture}(1,0.568551)%
    \lineheight{1}%
    \setlength\tabcolsep{0pt}%
    \put(0,0){\includegraphics[width=\unitlength,page=1]{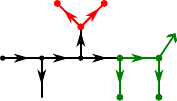}}%
    \put(0.4932798,0.35985375){\color[rgb]{1,0,0}\makebox(0,0)[lt]{\lineheight{1.25}\smash{\begin{tabular}[t]{l}$i$\end{tabular}}}}%
    \put(0.64316869,0.27830198){\color[rgb]{0,0.50196078,0}\makebox(0,0)[lt]{\lineheight{1.25}\smash{\begin{tabular}[t]{l}$j$\end{tabular}}}}%
    \put(0,0){\includegraphics[width=\unitlength,page=2]{branch.pdf}}%
  \end{picture}%
\endgroup%
}
	\caption{The resolution graph of a plane curve with its directed structure. 
	In green the branch $\Gamma[j]$ of $\Gamma$ at $j$ and in red, $\Gamma[i]$.}
	\label{fig:branch}
\end{figure}

\begin{definition}\label{def:nice_total_order}
	A total order $>$ of the set of branches $\{C_1, \ldots, C_r\}$ of a 
	plane curve is said to be {\em nice} if the following is satisfied
	
	\begin{enumerate}
	\item \label{it:nice_total_order_i} When $r=1$ the unique total order is nice
	\item \label{it:nice_total_order_ii}If $C_a$ and $C_b$ have the same 
	tangent and it is different from the tangent of $C_c$, then $C_a > C_c$ 
	if and only if $C_b > C_c$.
	\item \label{it:nice_total_order_iii} The total order induced on the plane curves $(\tilde{C},p_\ell)$ 
	is nice for all $\ell \in \{1, \ldots, t\}$.  
	\end{enumerate}
\end{definition}

\begin{remark}\label{rem:nice_total_order}
	\begin{enumerate}
	\item By \cref{it:nice_total_order_ii}, a nice total order induces a total 
	order on the set of tangents. 
	
	\item \label{it:rem_nice_total_order_converse} Conversely, if we equip each 
	of the plane curves $(\tilde{C},p_\ell)$  
	with a nice total order, then, any total order of the tangents of 
	$(C,0)$ induces a nice total order on the set of branches of $(C,0)$.
	\end{enumerate}
\end{remark}

\begin{lemma}\label{lem:nice_total_order}
	Any plane curve $(C,0)$ admits a nice total order.
\end{lemma}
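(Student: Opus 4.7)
The plan is to prove this by induction on the number $N$ of point-blowups required in an embedded resolution of the plane curve germ $(C,0)$. The key observation is that the ``niceness'' condition in \Cref{def:nice_total_order} is recursive: condition \Cref{it:nice_total_order_iii} demands that the induced orders on the germs $(\tilde C, p_\ell)$ be nice, and these germs are strictly simpler than $(C,0)$ in the sense that they require fewer blowups to resolve. Thus induction on $N$ is the natural strategy, and most of the inductive step has already been isolated in \Cref{rem:nice_total_order}\Cref{it:rem_nice_total_order_converse}.

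For the base case $N=0$, the germ $(C,0)$ is already smooth, so $r=1$ and the unique total order is nice by \Cref{it:nice_total_order_i}.

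For the inductive step, suppose $N \geq 1$ and that the lemma holds for every plane curve germ admitting an embedded resolution in at most $N-1$ blowups. Let $\pi_0:(Y_0, D_0)\to(\C^2,0)$ be the blowup of the origin, let $\tilde C$ be the strict transform, and let $p_1,\dots,p_t$ be the intersection points of $\tilde C$ with $D_0$, as in \Cref{ss:intial_data}. Since the minimal embedded resolution of $(C,0)$ factors through $\pi_0$, the germ $(\tilde C, p_\ell)$ admits an embedded resolution in at most $N-1$ blowups for each $\ell$. By the induction hypothesis, each $(\tilde C, p_\ell)$ admits a nice total order. Now choose any total order on the set of tangents $\{p_1,\dots,p_t\}$ of $(C,0)$. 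By \Cref{rem:nice_total_order}\Cref{it:rem_nice_total_order_converse}, this data assembles into a nice total order on the set of branches of $(C,0)$, completing the induction.

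The argument is essentially bookkeeping once the correct induction quantity is chosen; no hard step remains because the recursive structure of ``niceness'' mirrors the recursive structure of embedded resolution by point-blowups. The only mild subtlety is verifying that the induction quantity is well-defined and strictly decreases under one blowup, which is the standard finiteness of embedded resolution for plane curve germs. The conceptual content of the lemma is therefore entirely captured by \Cref{rem:nice_total_order}\Cref{it:rem_nice_total_order_converse}, and the lemma serves as the base-level existence statement that justifies later invocations of a chosen nice total order throughout the paper.
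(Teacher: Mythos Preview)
Your proof is correct and takes essentially the same approach as the paper: induction on the number of blowups in the embedded resolution, with the base case handled by \Cref{def:nice_total_order}\cref{it:nice_total_order_i} and the inductive step reduced via \Cref{rem:nice_total_order}\cref{it:rem_nice_total_order_converse} to choosing an arbitrary total order on the tangents.
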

\begin{proof}
	We use induction on the number $\sigma$ of blow ups needed for the 
	minimal embedded resolution.
	If $\sigma =0$, then $(C,0)$ is smooth and, in particular, it has one 
	single branch and 
	\Cref{def:nice_total_order} \cref{it:nice_total_order_i} yields the 
	result.
	
	If $\sigma > 0$, the induction hypothesis applies to the plane curves 
	$(\tilde{C}, p_\ell)$, $\ell=1, \ldots, t$. Thus, by 
	\Cref{rem:nice_total_order} \cref{it:rem_nice_total_order_converse}, it 
	suffices to choose a total order of the tangents of $(C,0)$. 
\end{proof}

\begin{notation}
	Let $<$ be a nice total order on the set of branches $\{C_1, \ldots, C_r\}$ 
	of a plane curve $(C,0)$. We write $\A[\ell] < \A[k]$ if $C_i < C_j$ for 
	every branch $C_i$ corresponding to an arrowhead in $\A[\ell]$ and every 
	branch $C_j$ corresponding to an arrowhead in $\A[k]$.
\end{notation}

\begin{lemma}\label{lem:total_order_vertices}
	Let $(C,0)$ be a plane curve with a nice total order $<$ on the set of its 
	branches $\{C_1, \ldots, C_r\}$.
	Then there exists a unique total order (that we also denote by $<$) on the 
	set of vertices and arrowheads
	$\W_\Gamma$ of the dual graph $\Gamma$ of the minimal resolution of $(C,0)$ 
	that satisfies the
	following two properties.
	\begin{enumerate}
		\item If $j\to i$, then $j < i$.
		\item Assume that $i\to k$ and $i \to \ell$ with $k \neq \ell$. 
		\begin{itemize}
			\item If 
			$\A[\ell] \neq \emptyset$ and  $\A[k] \neq \emptyset$ with
			$\A[\ell] < \A[k]$, then $u < v$ for every pair of vertices with $u 
			\in 
			\Gamma[\ell$] and $v \in \Gamma[k]$.
			
			\item If 
			$\A[\ell] = \emptyset$  then $u < v$ for every pair of vertices 
			with $u \in 
			\Gamma[\ell]$ and $v \in \Gamma[k]$.
		\end{itemize}
	\end{enumerate}
\end{lemma}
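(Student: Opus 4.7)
The plan is to proceed by induction on $\sigma$, the number of blow-ups in the minimal embedded resolution of $(C,0)$, paralleling the induction used in the proof of \Cref{lem:nice_total_order}. When $\sigma=0$, the curve is smooth with a single branch, $\Gamma$ consists of the root $0$ together with one arrowhead, and there is trivially a unique total order; conditions (i) and (ii) hold vacuously.

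For the inductive step, I exploit the recursive structure of the resolution. Let $p_1, \ldots, p_t$ be the points of $\tilde{C} \cap D_0$. Each child $k$ of $0$ in $\Gamma$ is either an arrowhead (if the corresponding $p_\ell$ is a simple transverse point lying on a single smooth branch) or the root of the resolution subgraph $\Gamma_\ell$ of $(\tilde{C},p_\ell)$. By \Cref{def:nice_total_order}(iii), each $(\tilde{C},p_\ell)$ inherits a nice total order on its branches, and, since its minimal embedded resolution uses strictly fewer blow-ups, the inductive hypothesis supplies a unique total order $<_\ell$ on $\W_{\Gamma_\ell}$ satisfying the analogues of (i) and (ii). I then define $<$ on $\W_\Gamma$ by: declaring $0$ minimal; using $<_\ell$ inside each subtree $\Gamma[k_\ell]$; and between two vertices lying in distinct sibling subtrees, applying condition (ii) at the root, with the comparisons of the sets $\A[k]$ coming from the nice order on the branches of $(C,0)$ via \Cref{rem:nice_total_order}. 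The same recipe, applied at every branching vertex viewed as the root of its own branch $\Gamma[i]$, yields existence.

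For uniqueness, condition (i) forces $0$ to be the minimum and makes every vertex larger than its ancestors and smaller than its descendants. Given two incomparable vertices $u,v$, let $i$ be their unique last common ancestor; they descend through distinct children $k,\ell$ of $i$, and condition (ii), combined with the induced order on $\A[k]$ and $\A[\ell]$, dictates the relative order of $u$ and $v$. Since (i) and (ii) are forced at every branching vertex, the candidate order is uniquely determined, and within each subtree the inductive hypothesis identifies it with the one constructed above.

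The main technical obstacle I expect is verifying that condition (ii) produces a well-defined linear order on the sibling subtrees at each branching vertex, and that the resulting global relation is transitive. For the former, the crucial input is the nice total order: \Cref{rem:nice_total_order} guarantees that for any two non-dead sibling subtrees $\Gamma[k],\Gamma[\ell]$ the sets $\A[k]$ and $\A[\ell]$ are strictly comparable with respect to the nice order on branches, so no tie arises between non-dead siblings, while dead siblings are placed before non-dead ones by the second clause of (ii). For transitivity, I would proceed by case analysis on the least common ancestors of triples $(u,v,w)$: if at least two of them lie in the same sibling subtree at some vertex, the comparison reduces to the inductive case, and otherwise the three comparisons reduce to transitivity of the nice order on $\A$. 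Combining these checks closes the induction.
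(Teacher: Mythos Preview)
There is a real gap in your inductive setup: the subtree $\Gamma[k]$ at a child $k$ of $0$ is \emph{not} the minimal resolution graph of $(\tilde C, p_\ell)$. For the cusp $y^2=x^3$, after one blow-up the strict transform $\tilde C$ is already smooth, so its resolution graph is a single arrowhead; yet the unique child $k$ of $0$ in $\Gamma$ has $\Gamma[k]$ consisting of two exceptional vertices plus the arrowhead, since two further blow-ups are required to make $\tilde C\cup D_0$ normal crossing. The dead branch inside $\Gamma[k]$ arises precisely from resolving this tangency with $D_0$, which $(\tilde C, p_\ell)$ alone does not see. Even replacing $(\tilde C, p_\ell)$ by $(\tilde C\cup D_0, p_\ell)$ does not rescue the induction directly: its resolution graph is $\Gamma[k]$ with an extra arrowhead for $D_0$, but rooted at the \emph{first blow-up of $p_\ell$}, which in general is not $k$ (in the cusp it is the leaf of the dead branch). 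Hence the order furnished by the inductive hypothesis is with respect to the wrong root and cannot be transported to $\Gamma[k]$ as you propose.

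The paper sidesteps this by constructing the order directly, without induction on $\sigma$: for any pair $u,v$ it compares by ancestry when one is an ancestor of the other, and otherwise via their lowest common ancestor together with the nice order on $\A[k],\A[\ell]$, invoking the second clause of (ii) when one of these is empty. Your uniqueness argument via the LCA is essentially the paper's. You correctly isolate comparability of $\A[k]$ and $\A[\ell]$ for non-dead siblings as the crux; this does follow from iterating \Cref{def:nice_total_order}\cref{it:nice_total_order_ii}--\cref{it:nice_total_order_iii} along the sequence of infinitely near points of $(C,0)$, but that is an induction tracking how the branches regroup under successive blow-ups, not an induction on the vertex set of a smaller resolution graph.
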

\begin{proof}
	We first show the existence by constructing the total order $<$. Let $u, v 
	\in 
	\W_\Gamma$ be two distinct vertices. Since $\Gamma$ is a directed tree 
	rooted at $0$, we have two mutually exclusive cases:
	\begin{enumerate}
		\item  If $u$ is an ancestor of $v$ (that is, if there
		is a directed path $u \to \ldots \to v$), we set $u < v$. 
		Analogously, if $v$ is an ancestor of $u$, we define $v < u$.
		
		\item  If neither $u$ nor $v$ is an 
		ancestor of the other, let $i = \text{LCA}(u,v)$ be their lowest common 
		ancestor. Then there exist distinct {\em children} $k, \ell$ of $i$ 
		such that 
		$i \to k$, $i \to \ell$, $u \in \Gamma[k]$, and $v \in \Gamma[\ell]$. 
		Let 
		$\A[k] 
		= 
		 \Gamma[k] \cap \A$ and $\A[\ell] = \Gamma[\ell] \cap \A$ be the 
		 non-empty sets of 
		branch descendants (identified with the corresponding arrowheads). The 
		given nice total order on $\A$ (denoted $<_\A$) 
		is in particular a total order, so it completely orders these 
		 sets.
		\begin{itemize}
			\item If $\A[k] <_\A \A[\ell]$ (that is, if $\forall a \in \A[k], 
			\forall 
			b 
			\in \A[\ell], a <_\A b$), we define $u < v$.
			\item If $\A[\ell] <_\A \A[k]$, we define $v < u$.
		\end{itemize}
	\end{enumerate}
	This relation $<$ is total and anti-symmetric by construction. Transitivity 
	follows from a case by case analysis of the LCA's of pairs $(u,v)$, 
	$(v,w)$, and 
	$(u,w)$. This construction satisfies the two properties:
	\begin{enumerate}
		\item If $j \to i$, then $j$ is an ancestor of $i$, so $j < i$.
		\item If $i \to k$, $i \to \ell$, and $\A[\ell] <_\A \A[k]$, then for 
		any 
		$u \in \Gamma[\ell]$ and $v \in \Gamma[k]$, their LCA is $i$. And we 
		have already seen that 
		$\A[\ell] <_\A \A[k]$ implies $u < v$.
	\end{enumerate}
	
	To prove uniqueness, let $<'$ be any total order satisfying properties (i) 
	and (ii). Let $u, v \in \W_\Gamma$ be distinct.
	\begin{enumerate}
		\item If $u$ is an ancestor of $v$, there is a path $u = x_0 \to x_1 
		\to \ldots \to x_m = v$. By property (i) for $<'$, $x_j <' x_{j+1}$ for 
		all $j$. By transitivity of $<'$, $u <' v$. This matches $<$.
		\item If $u, v$ are not ancestors, let $i = \text{LCA}(u,v)$ with $u 
		\in \Gamma[k]$ and $v \in \Gamma[\ell]$. Assume $\A[\ell] <_\A \A[k]$. 
		Property 
		(ii) states that $u' <' v'$ for every $u' \in \Gamma[\ell]$ and $v' \in 
		\Gamma[k]$. Thus, $v <' u$. This also matches $<$. The case $\A[k] <_\A 
		\A[\ell]$ is analogous.
	\end{enumerate}
	Since $<'$ must agree with $<$ in all cases, the total order is unique.
\end{proof}

\begin{definition}\label{def:label_function}
	For each vertex $i\neq 0$ with $j \to i$. We define a {\em labeling 
	function} $h_i$ on the set of neighbors of $i$ other than $j$. We consider 
	two cases and in each of the cases the labeling function preserves the 
	total order of $\W$. 
	
	If $i$ has a nearby dead branch, then
	\[
	h_i: \V_i \setminus \{j\} \to \{0, \ldots, |\V^+_i| - 2\}
	\]
	and, in particular, $h_i(k)=0$ if $k$ is on the dead branch. Otherwise,
	\[
	h_i: \V_i \setminus \{j\} \to \{1, \ldots, |\V^+_i| - 1\}.
	\]
\end{definition}

\begin{figure}
	\centering
	\small
	\resizebox{0.7\textwidth}{!}{
\begingroup%
  \makeatletter%
  \providecommand\color[2][]{%
    \errmessage{(Inkscape) Color is used for the text in Inkscape, but the package 'color.sty' is not loaded}%
    \renewcommand\color[2][]{}%
  }%
  \providecommand\transparent[1]{%
    \errmessage{(Inkscape) Transparency is used (non-zero) for the text in Inkscape, but the package 'transparent.sty' is not loaded}%
    \renewcommand\transparent[1]{}%
  }%
  \providecommand\rotatebox[2]{#2}%
  \newcommand*\fsize{\dimexpr\f@size pt\relax}%
  \newcommand*\lineheight[1]{\fontsize{\fsize}{#1\fsize}\selectfont}%
  \ifx\svgwidth\undefined%
    \setlength{\unitlength}{376.35747474bp}%
    \ifx\svgscale\undefined%
      \relax%
    \else%
      \setlength{\unitlength}{\unitlength * \real{\svgscale}}%
    \fi%
  \else%
    \setlength{\unitlength}{\svgwidth}%
  \fi%
  \global\let\svgwidth\undefined%
  \global\let\svgscale\undefined%
  \makeatother%
  \begin{picture}(1,0.67526043)%
    \lineheight{1}%
    \setlength\tabcolsep{0pt}%
    \put(0,0){\includegraphics[width=\unitlength,page=1]{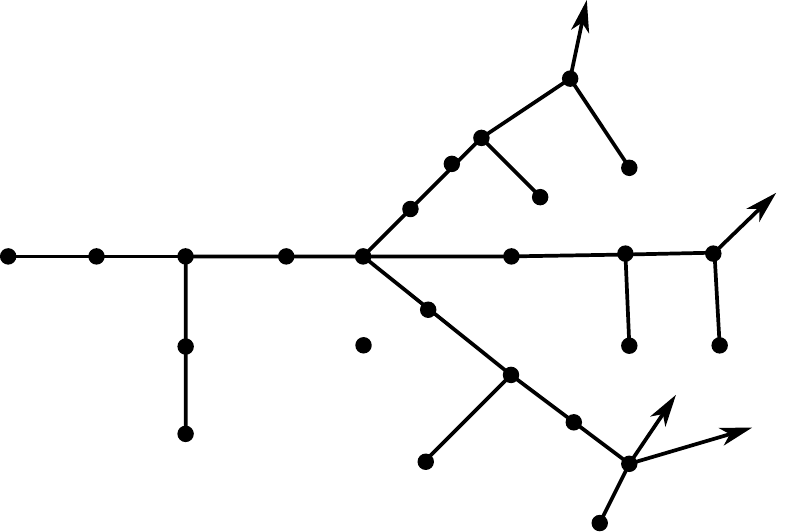}}%
    \put(0.94207016,0.09945551){\color[rgb]{0,0,0}\makebox(0,0)[lt]{\lineheight{1.25}\smash{\begin{tabular}[t]{l}$1$\end{tabular}}}}%
    \put(0.86627996,0.15291248){\color[rgb]{0,0,0}\makebox(0,0)[lt]{\lineheight{1.25}\smash{\begin{tabular}[t]{l}$2$\end{tabular}}}}%
    \put(0.94912961,0.43158872){\color[rgb]{0,0,0}\makebox(0,0)[lt]{\lineheight{1.25}\smash{\begin{tabular}[t]{l}$3$\end{tabular}}}}%
    \put(0.75330305,0.65001065){\color[rgb]{0,0,0}\makebox(0,0)[lt]{\lineheight{1.25}\smash{\begin{tabular}[t]{l}$4$\end{tabular}}}}%
    \put(0,0){\includegraphics[width=\unitlength,page=2]{nice_total.pdf}}%
  \end{picture}%
\endgroup%
}
	\caption{A nice total order gives a natural embedding of the resolution 
	graph on the real plane $\mathbb{R}^2$. The numbers on the arrows indicate 
	the nice total order given on the set of branches as input data for a
	total order on the set of vertices and arrowheads as defined in 	
	\Cref{lem:total_order_vertices}.}
	\label{fig:nice_total}
\end{figure}

\subsubsection*{A special subgraph}

In \cite[Section 6]{PSspine} we defined a special subgraph of the dual graph 
of the minimal embedded resolution. The motivation underlying the definition of 
this graph was the characterization of the set of divisors to which we could 
extend (a rescaling of) the vector field $- \nabla \log |f|$. For more on this, 
we refer the reader to the cited paper. Here we just give a direct definition.

\begin{definition}\label{def:upsilon}
	Let
	\index{$\Upsilon$}
	$\Upsilon \subset \Gamma$ be the smallest connected subgraph of
	$\Gamma$ 
	containing the vertex corresponding to the first blow-up $0 \in \V$, 
	as well as any vertex in
	$\V$ adjacent to an arrow-head $a \in \A$.
	Let $\V_\Upsilon \subset \V$ be the vertex set of $\Upsilon$. We call the 
	vertices of $\Upsilon$ {\em invariant vertices}.
\end{definition}

\section{Construction of an analytic model}
\label{s:analytic_model}

In the previous section, we established the purely combinatorial
data of the singularity, consisting of the dual graph $\Gamma$
equipped with a directed structure and a total order on its
vertices $\W_\Gamma$.

We now proceed to construct a geometric object that realizes
this combinatorial data. We will build a complex manifold $Y$
by gluing together simple analytic charts ($U_i, \tU_i$). The
gluing process itself will be dictated by the directed edges
and the labeling function $h_i$ from \Cref{def:label_function}.
This manifold $Y$ will serve as an explicit model for the
resolution space of a singularity.

\subsection{Analytic charts}
\label{ss:analytic_chart}
For each $i \in \V$, define charts
\[
  U_i = \C \times D^2_\delta,\qquad
  \tilde U_i = \C\times D^2_\delta,
\]
where $D^2_\delta \subset \C$ is a disk of a small radius $\delta$.
Let $\sim$ be the equivalence
relation on their disjoint union generated by
\begin{equation} \label{eq:Y_sim_i}
  U_i \ni (u,v)
  \sim
  (u v^{b_i},v^{-1}) \in \tilde U_i, \quad
  u \in D^2_\delta,\: v \in \C^*,
\end{equation}
for any $i \in \V$
\begin{equation} \label{eq:Y_sim_ij}
  \tU_i \ni (u,v)
  \sim
  (v, u+h_j(i)) \in U_j, \qquad
  u,v \in D^2_\delta,
\end{equation}
for any edge $j \to i$. We then set
\[
  Y = \coprod_{i \in \V} \left( U_i \amalg \tU_i \right) / \sim.
\]
Then $Y$ is a complex manifold with charts $U_i$ and $\tU_i$
with coordinates $u_i,v_i$ and $\tu_i, \tv_i$.

Furthermore, we have a compact rational curve $D_i \subset Y$
for each $i \in \V$ defined by $u_i = 0$ in $U_i$
and $\tu_i = 0$ in $\tU_i$.
It follows from \cref{eq:Y_sim_i} that
the Euler number of the normal bundle of $D_i$ in $Y$ is
$-b_i$. Also, $D_i$ and $D_j$ intersect precisely when $j i$ is an edge\
in $\Gamma$, by \cref{eq:Y_sim_ij}.
Therefore, the dual graph to this configuration of curves
is $\Gamma$.
As a result, using the Castelnuovo criterion,
$Y$ blows down to a smooth surface, which we call $Y_{-1}$.
We get a map
\[
  \pi: Y \to Y_{-1}
\]
which is a modification of the surface $Y_{-1}$. Let $o \in Y_{-1}$ be the 
image of $D_\V$ by $\pi$.

If $j\to i$ is a directed edge, then,
in the coordinates $u_j,v_j$, the set $U_j\cap D_i$ is given by
$v_j = h_j(i)$.
We also have a noncompact curve $D_a \subset Y$ given by
$v_i = h_i(a)$ for each $a \in \A$ adjacent to $i \in\V$.
Denote by $C_a$ the image of $D_a$ in $Y_{-1}$ under $\pi$.
Thus $C_a$ is the image of a good parametrisation and therefore, a  complex 
curve in $Y_{-1}$ by 
\cite[Lemma 2.3.1]{Wall_ctc} . Then there exists an 
\[
  f_a \in \O_{Y_{-1},o}
\]
such that the germ $(C_a,o)$ with its reduced analytic structure is the 
hypersurface germ defined by $f_a$ at $o\in 
Y_{-1}$, in other words
\[
(Z(f_a), o) = (C_a,o).
\]
We define $f \in \O_{Y_{-1},o}$ by 
\begin{equation} \label{eq:f}
  f = \prod_{a \in \A} f_a^{m_a}.
\end{equation}

\subsection{A regular sequence}
\label{ss:regular_sequence}
The sets $\tU_0 \setminus U_0$ and $U_0 \setminus \tU_0$ are curvettes
to the divisor $D_0$. By the same argument as above, there exist
functions $x,y\in \O_{Y_{-1},o}$ such that the germs given by
$x=0$ and $y=0$ are reduced, and their strict transforms in $Y$ are these
curvettes which intersect  $D^\circ_0$ transversely. It follows that $x,y$ 
represents a basis of
$\m_{Y_{-1},o}^{\phantom{2}} / \m^2_{Y_{-1},o}$,
 i.e. $x,y$ is a regular sequence
of parameters in $\O_{Y_{-1},o}$. As a result, we have ab isomorphism
\begin{equation} \label{eq:xy}
  \O_{Y_{-1},o} \simeq \C \{x,y\}.
\end{equation}
By construction, the coordinate axis $x=0$ and $y=0$ are not tangent
to the curve $(C,0)$. Therefore, if $i \in \V$, then
$\ord_{D_i}(x) = \ord_{D_i}(y) = c_{0,i}$ (recall \Cref{def:max_cycle}). In 
particular, the pullback $\pi^*x$ vanishes with order $1$ along
$D_0$, and the restriction of
\[
  \frac{ \pi^* x|_{U_0}}{u_0}
\]
to $\{u_0 = 0 \} = U_0 \cap D_0$
is a polynomial in $v$ which vanishes with order $c_{0,\ell}$ at $h_0(\ell)$.
Therefore, there exists an $a \in \C^*$ such that
\[
  \left.
  \frac{ \pi^* x|_{U_0}}{u_0}
  \right|_{U_0 \cap D_0}
  =
  a \prod_{\ell \in \V_0} (v_0 - h_0(\ell))^{c_{0,\ell}}.
\]
Similarly, since $\pi^*y|_{U_0}$ also vanishes with order one along $v_0 = 0$
there is a $b \in \C^*$ so that
\[
  \left.
  \frac{ \pi^* y|_{U_0}}{u_0}
  \right|_{U_0 \cap D_0}
  =
  b v_0 \prod_{\ell \in \V_0} (v_0 - h_0(\ell))^{c_{0,\ell}}.
\]
We will assume that $x,y$ are chosen in such a way that $a = b = 1$.
In particular,
\[
  \left.
  \frac{\pi^* y}{\pi^* x} 
  \right|_{U_0 \cap D_0}
  =
  v_0.
\]
In fact, since the fraction $y/x$ defines a meromorphic function
on $Y_{-1}$ whose only pole is along $\{x=0\}$, 
its pullback restricts to a holomorphic function on any $D_i$ for
$i \in \V \setminus \{0\}$. As a result, if $i \in \V\setminus \{0\}$,
then $i$ is on one of the branches $\Gamma[\ell]$ where $\ell$ is
a neighbor of $0$, and
\[
  \left.
  \frac{\pi^*y}{\pi^*x}
  \right|_{D_i}
  \equiv
  h_0(\ell).
\]

\subsection{Local expressions for $f$}
\label{ss:local_expressions}

Fix a vertex $i \in \V$ and let $\ell \in \V^+_i$, that is, an adjacent vertex 
with $i \to \ell$. By 
construction $\pi^*f$ has 
a zero 
of order $m_\ell$ (resp. $m_i$) along $D_\ell$ (resp. $D_i$). Therefore, the 
function 
$\pi^*f|_{U_i \cap \tU_\ell}$ expanded near the point $D_i\cap D_\ell = 
(0,h_i(\ell))$ 
on coordinates $u_i, v_i$, is of the 
form
\begin{equation} \label{eq:ali_hot}
	\pi^* f(u_i,v_i)|_{U_i \cap \tU_\ell}
	=
	u_i^{m_i} (v_i - h_i(\ell))^{m_\ell} (a_{i\ell} + \hot) 
\end{equation}
where $a_{i\ell} \in \C^*$. In coordinates
$\tu_\ell, \tv_\ell$, we have a similar expansion
\[
	\pi^* f(\tu_\ell,\tv_\ell)|_{U_i \cap \tU_\ell}
	=
	\tu_\ell^{m_\ell} \tv_\ell^{m_i} (a_{\ell i} + \hot).
\]
with $a_{\ell i} \in \C^*$. By \cref{eq:Y_sim_ij}, we find
\[
  a_{i\ell} = a_{\ell i}.
\]
Since $\pi^*f$ has a zero of order $m_i$ along $D_i$, the fraction
$\pi^*f|_{U_i} / u^{m_i}$ defines a holomorphic function in $U_i$.
Its restriction to $U_i\cap D_i = \{u_i = 0\}$ is a polynomial
in $v_i$ 
\begin{equation}\label{eq:fi_poly}
	\frac{\pi^*(f)|_{U_i}}{u^{m_i}}= a_{ij}\prod_{\ell \in \V_i^+} (v_i - 
	h_i(\ell))^{m_\ell}
\end{equation}
which has a zero of order $m_\ell$ at $v_i = h_i(\ell)$
for each $\ell \in \V_i^+$.
Let $a_0 \neq 0$ be the leading term of this polynomial, in the case
when $i = 0 \in \V$.
By replacing $f$ by $f/a_0$, we will, from now on, assume that this leading
term is $1$. As a result, in the case $i = 0$, we have
\begin{equation} \label{eq:a_zero_k}
  a_{0k} = \prod_{\substack{\ell \in\V_0 \\ \ell \neq k}}
           (h_0(k) - h_0(\ell))^{m_\ell},
  \quad
  k \in \V_0.
\end{equation}
If $i$ is any other vertex in $\V$ other that $0$, there exists a
$j \in \V$ so that $j \to i$. By the same argument, we find
\begin{equation} \label{eq:a_i_k}
  a_{ik} = a_{ij}\prod_{\substack{\ell \in\V_i^+ \\ \ell \neq k}}
           (h_i(k) - h_i(\ell))^{m_\ell},
  \quad
  k \in \V_i^+.
\end{equation}

\begin{definition}\label{def:Siell}
If $i\to k$ is an edge in $\Gamma$, let
\[
  S_{ik} = \set{ \ell \in \W_i } { \ell > k }
         = \set{ \ell \in \W_i } { h_i(\ell) > h_i(k) }.
\]
See left hand side of \cref{fig:s_ik_m_ik}.
\end{definition}

\begin{definition}\label{def:Miell}
Let $i \to k$ be an edge in $\Gamma$, and let
$0 = \ell_0 \to \ell_1 \to \cdots \to \ell_{s+1} = k$ be the geodesic
connecting $0$ and $k$ in $\Gamma$. In particularm $\ell_{s} = i$. We define 
\[
  M_{ik} = \sum_{ \substack{ 0 \leq r < s +1 \\ c \,\in 
		S_{\ell_{r\vphantom{+1}}\ell_{r+1}} }} m_c.
\]
Now let, $0 = \ell_0 \to \ell_1 \to \cdots \to \ell_s = i$ be the geodesic
connecting $0$ and $i$, we define
\[
M_{i} =\sum_{k \in \V_i^+} m_k +\sum_{ \substack{ 0 \leq r < s \\ c \,\in 
		S_{\ell_{r\vphantom{+1}}\ell_{r+1}} }} m_c.
\]
See right hand side of \cref{fig:s_ik_m_ik}.
\end{definition}

\begin{figure}[!ht]
	\centering
	\resizebox{0.9\textwidth}{!}{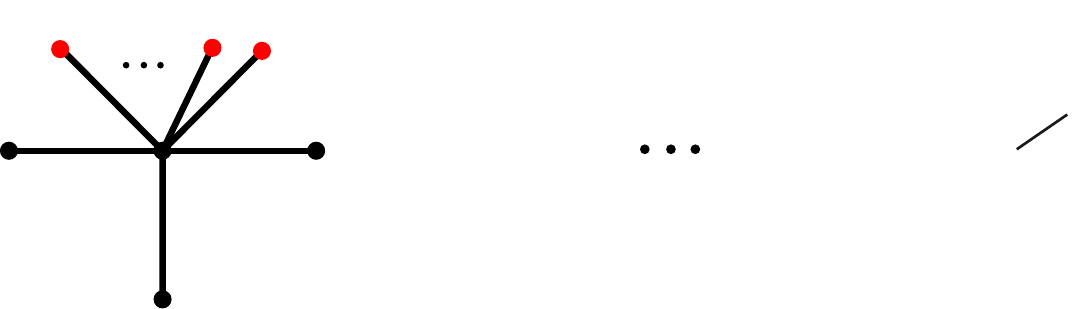}
	\caption{On the left hand side we see, in red, the vertices that are in 
	$S_{ik}$. On the right hand side we see: in red the vertices that 
	contribute to 
	$M_{ik}$; which together with those in blue are that the vertices 
	that contribute to $M_{k}$.}
	\label{fig:s_ik_m_ik}
\end{figure}

\begin{lemma} \label{lem:M_ik}
Let $i \to k$ be an oriented edge in $\Gamma$. Then,
 $a_{ik} \in \Z \setminus \{0\}$ and
\begin{equation} \label{eq:Mik}
  \sign(a_{ik})
  =
  (-1)^{M_{ik}}
\end{equation}

\end{lemma}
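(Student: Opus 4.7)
The plan is to prove this by induction on the distance $d(i)$ from $i$ to the root $0$ in the directed tree $\Gamma$.

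For the base case $d(i) = 0$, so $i = 0$, \cref{eq:a_zero_k} gives
\[
a_{0k} = \prod_{\ell \in \V_0,\,\ell \neq k}(h_0(k) - h_0(\ell))^{m_\ell}.
\]
Since $h_0$ takes distinct nonnegative integer values on its domain, every factor is a nonzero integer, so $a_{0k} \in \Z\setminus\{0\}$. A factor $(h_0(k)-h_0(\ell))^{m_\ell}$ is negative exactly when $h_0(\ell) > h_0(k)$, equivalently when $\ell \in S_{0k}$, contributing a sign of $(-1)^{m_\ell}$. Hence $\sign(a_{0k}) = (-1)^{\sum_{\ell \in S_{0k}} m_\ell}$, and applying the definition of $M_{ik}$ with the degenerate geodesic $0 = \ell_0 \to \ell_1 = k$ (i.e.\ $s=0$) shows this exponent equals $M_{0k}$, as required.

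For the inductive step, let $i \neq 0$ with parent $j$ (so $j\to i$) and let $i \to k$. By \cref{eq:a_i_k} and the symmetry $a_{ij} = a_{ji}$ (obtained from \cref{eq:Y_sim_ij} just before \cref{eq:ali_hot}),
\[
a_{ik} = a_{ji}\prod_{\ell \in \V_i^+,\,\ell \neq k}(h_i(k)-h_i(\ell))^{m_\ell}.
\]
The inductive hypothesis applies to the edge $j\to i$, giving $a_{ji} \in \Z\setminus\{0\}$ with $\sign(a_{ji}) = (-1)^{M_{ji}}$. The product on the right is a nonzero integer since $h_i$ is injective, so $a_{ik} \in \Z\setminus\{0\}$.

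For the sign, I would first argue that the sign of the product factor equals $(-1)^{\sum_{\ell \in S_{ik}}m_\ell}$. This requires checking that the parent $j$ of $i$, which is excluded from the product's index set, is also not in $S_{ik}$: this follows from \Cref{lem:total_order_vertices}(i), which gives $j < i < k$ in the total order on $\W$. Combined with the fact (built into \Cref{def:label_function}) that $h_i$ is order-preserving, the inequality $h_i(\ell) > h_i(k)$ is equivalent to $\ell > k$, and hence $S_{ik} = \{\ell \in \V_i^+ \setminus \{k\} : h_i(\ell) > h_i(k)\}$. Each such $\ell$ contributes $(-1)^{m_\ell}$ to the product, yielding the claimed sign.

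Finally, I would establish the telescoping identity
\[
M_{ik} = M_{ji} + \sum_{\ell \in S_{ik}} m_\ell,
\]
which is immediate from \Cref{def:Miell}: the geodesic from $0$ to $k$ is the geodesic from $0$ to $i$ with the edge $i \to k$ appended (where $i = \ell_s$ and $k = \ell_{s+1}$), so the sum defining $M_{ik}$ splits into the sum defining $M_{ji}$ plus the contribution of the final edge, which is precisely $\sum_{c \in S_{ik}} m_c$. Combining,
\[
\sign(a_{ik}) = (-1)^{M_{ji}}\cdot(-1)^{\sum_{\ell \in S_{ik}} m_\ell} = (-1)^{M_{ik}},
\]
completing the induction. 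The main bookkeeping subtlety is the verification that the parent of $i$ plays no role in $S_{ik}$ (so the sign count matches the definition of $M_{ik}$ which is indifferent to the parent at level $s$); everything else is a direct translation of the two formulas \cref{eq:a_zero_k}--\cref{eq:a_i_k} into a telescoping sum.
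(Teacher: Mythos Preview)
Your proof is correct and follows essentially the same approach as the paper: both take signs in \cref{eq:a_i_k} and iterate (inductively in your case, ``repeating the process'' in the paper's wording) along the geodesic from $0$ to $k$, telescoping the exponents. Your version is more detailed, in particular the explicit check that the parent $j$ lies outside $S_{ik}$ and the telescoping identity $M_{ik} = M_{ji} + \sum_{\ell \in S_{ik}} m_\ell$ are left implicit in the paper.
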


\begin{proof}
 Take signs on both sides of \cref{eq:a_i_k} 
which gives us
\[
\sign(a_{ik}) = \sign(a_{ij}) (-1)^{\sum_{h_i(\ell) > h_i(k)} m_\ell}.
\]
The result follows from applying the same formula to $\sign(a_{ij})$ and 
repeating the process iteratively along all the vertices in the geodesic $0 = 
\ell_0 \to \ell_1 \to \cdots \to \ell_{s+1} = k$ connecting $0$ and $k$ (with 
$\ell_{s}=i$).
\end{proof}

\begin{lemma}\label{lem:mi_equations}
	Let $q \in \Droc_{i,\theta}$ be such that its image 
	by $\Pi_i$ is real.
	Then, $\alpha_i$ satisfies the following equations depending on where 
	$\Pi_i(q)$ 
	lies:
	\[
	\begin{array}{llll}
		m_i \alpha_i(q) = \theta - M_i \pi & \text{ if } &- \infty <\Pi_{i}(q) 
		< 0 
		&\\
		m_i \alpha_i(q) = \theta - M_i \pi & \text{ if } &- \infty <\Pi_{i}(q) 
		< 1 
		& 
		\text{ 
			and } i \text{ does not have a neighbor in } \Gamma \setminus 
			\Upsilon\\
		m_i \alpha_i(q) = \theta - M_{i k} \pi  & \text{ if } & h_i(k) 
		<\Pi_{i}(q) < 
		h_i(k)+1 &  \text{ for } k \in \V_i^+\\
		m_i \alpha_i(q) = \theta - M_{i k} \pi  & \text{ if } & p_i +1 < 
		\Pi_{i}(q) 
		< 
		\infty &
	\end{array}
	\]
\end{lemma}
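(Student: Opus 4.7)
The plan is to read off $\alpha_i(q)$ directly from the leading-order local factorization of $\pi^*f$ on $U_i$ given in \cref{eq:fi_poly}:
\[
\pi^*f = u_i^{m_i}\,a_{ij}\prod_{\ell\in\V_i^+}(v_i-h_i(\ell))^{m_\ell} + \hot,
\]
where $j$ denotes the parent of $i$. Setting $u_i = r_i e^{i\alpha_i}$ in the real oriented blow-up and passing to $r_i\to 0$, the Milnor fiber condition $\arg(\pi^*f)=\theta$ becomes, modulo $2\pi$,
\[
m_i\alpha_i(q) \equiv \theta - \arg(a_{ij}) - \sum_{\ell\in\V_i^+} m_\ell\,\arg(v_i-h_i(\ell)).
\]
When $\Pi_i(q)=v_i$ is real, each $\arg(v_i-h_i(\ell))$ is $\pi$ or $0$ according to the sign of $v_i-h_i(\ell)$, and \Cref{lem:M_ik}, combined with the symmetry $a_{ij}=a_{ji}$, gives $\arg(a_{ij})\equiv M_{ji}\pi\pmod{2\pi}$. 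Substituting, the congruence collapses to
\[
m_i\alpha_i(q) \equiv \theta - \pi\Bigl(M_{ji} + \sum_{\ell\in\V_i^+,\,h_i(\ell)>v_i}m_\ell\Bigr)\pmod{2\pi}.
\]

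The lemma then reduces to a parity identity, in each listed interval, between the bracketed integer and the claimed $M_i$ or $M_{ik}$. These identities are read off by unpacking \Cref{def:Miell}: both $M_i$ and $M_{ik}$ share the common path contribution $\sum_{0\le r<s}\sum_{c\in S_{\ell_r\ell_{r+1}}}m_c$ along the geodesic from $0$ to $i$, and $M_{ji}$ equals this path contribution plus the terminal piece $\sum_{c\in S_{ji}}m_c$ associated to the edge $j\to i$. In the two left-region cases every child of $i$ satisfies $h_i(\ell)>v_i$, the terminal sum becomes $\sum_{\ell\in\V_i^+}m_\ell$, and the total assembles into $M_i$; the split between $\Pi_i(q)<0$ and $\Pi_i(q)<1$ mirrors precisely the two conventions of \Cref{def:label_function} depending on whether $i$ has a dead-branch neighbor, so that the minimum value of $h_i$ is $0$ or $1$. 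In the interval $h_i(k)<\Pi_i(q)<h_i(k)+1$ the contributing children are exactly those in $S_{ik}$, producing $M_{ik}$ on the nose. In the far-right interval $\Pi_i(q)>p_i+1$ no child contributes, the bracketed integer equals $M_{ji}$, and this matches the claimed $M_{ik}$ via the same bookkeeping applied to the relevant edge.

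The main obstacle is the careful parity accounting across the conventions of the labeling function, together with the interplay between arrowhead and internal children in the expansion of $\pi^*f|_{D_i}$. Once these are settled, the proof amounts to a direct unfolding of the recursive definitions of $M_i$ and $M_{ik}$.
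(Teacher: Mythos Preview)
Your proposal is correct and follows essentially the same route as the paper: take the argument of the factorization \cref{eq:fi_poly} on $\Droc_{i,\theta}$, evaluate each factor $\arg(v_i-h_i(\ell))\in\{0,\pi\}$ according to the position of the real point $v_i$, and invoke \Cref{lem:M_ik} for the sign of the leading constant. The only difference is that you spell out the combinatorial matching with $M_i$ and $M_{ik}$ (via $M_{ji}$ and the identity $M_{ik}=M_{ji}+\sum_{\ell\in S_{ik}}m_\ell$) more explicitly than the paper, which simply cites \cref{eq:a_i_k} together with \Cref{lem:M_ik} and declares the remaining three cases analogous.
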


\begin{proof}
	By construction, $\arg(q) =\theta$, because $q 
	\in \Droc_{i,\theta}$. By 
	taking argument of both sides of \cref{eq:fi_poly}, we get
	\[
	\theta = m_i \alpha_i(q)  + \arg\left( a_{ij}\prod_{\ell \in \V_i^+} (v_i - 
	h_i(\ell))^{m_\ell} \right)
	\]
	which, by part \Cref{lem:M_ik} and \cref{eq:a_i_k}, is equivalent to
	\[
	\theta = m_i \alpha_i(q)  - \arg\left( (-1)^{M_{ik}} \right) = m_i 
	\alpha_i(q)  
	- M_{ik} \pi
	\]
	for points $q$ such that $\Pi_{i}(q)$ lies in between polar points. This 
	proves 
	the third line of the statement. The other $3$ lines follow from very 
	similar considerations.
\end{proof}

\subsection{Description of the invariant Milnor fibration}
\label{ss:invariant_milnor}
We now use this 
analytic model to describe the \emph{invariant Milnor fibration}.
This involves a topological construction: the real oriented
blow-up. In this section we construct a new space, the \emph{invariant A'Campo 
space} 
$\Ainv$, which is a union of Seifert-fibered
pieces over the real-blown-up divisors $\hat{D}_i$ with some interpolating 
thickened tori. This corresponds to the classical A'Campo space \cite{ACampo} 
after 
contracting some disks to points. The monodromy on these contracted disks is 
purely periodic.

\subsubsection*{The real oriented blow-up and the Milnor fibration at radius 
$0$}

We start by defining the real oriented blow-up of the resolution space along 
the exceptional divisors. Here we follow \cite[Section 4]{PSspine}.
Let $(Y,D) \to (\C^2,0) \simeq (Y_{-1},0)$ be the embedded resolution of our 
curve singularity. 
Denote by $\sigma_i:\Yro_i \to Y$
the
\index{real oriented blow-up}
\emph{real oriented blow-up}
of $Y$ along the submanifold $D_i \subset Y$
for $i \in \W$ constructed as follows.
If $U \subset Y$ is a chart with coordinates $u,v$ such that $u = 0$ is
an equation for $D_i \cap U$,
then we take a coordinate chart
$\Uro  \subset \Yro_i$ with coordinates $r,\alpha,v \in \R_{\geq 0} \times 
\R / 2 \pi\Z \times \C$, where $r$ and $\alpha$ are polar coordinates for 
$u$, that is:
\begin{equation} \label{eq:polar_coords}
	r = |u|:\Uro \to \R,
	\qquad
	\alpha = \arg(u):\Uro \to \R / 2\pi \Z.
\end{equation}
We can cover $D_i$ by such charts in order to define an atlas for $\Yro_i$. 
In each of these charts, the map $\sigma_i$ is given by 
$\sigma_i(r,\theta,v)= (re^{i\theta}, v)$.
Denote the fiber product of these maps by
\begin{equation}\label{eq:fiber_product}
	\sigma = \bigtimes_{i\in\W} \sigma_i:(\Yro,\Dro_\W) \to (Y,D_\W).
\end{equation}
The spaces $\Yro_i$ are manifolds with boundary, and the space $\Yro$ is
a manifold with corners and, as a topological manifold, its boundary 
$\partial \Yro$ coincides with $\Dro_\W$.

\begin{notation}\label{not:strat}
	The corners of $\Yro$ induce a stratification indexed by the graph
	$\Gamma$ as follows.
	Set $\Dro_\emptyset = \Yro \setminus \Dro_\W$.
	For $i,j \in \W$, we define the following subspaces of $\partial \Yro$:
	\index{$\Dro_i$}
	\[
	\Droc_i = \sigma^{-1}(D_i^\circ), \qquad
	\Dro_i  = \sigma^{-1}(D_i),\qquad
	\Dro_{i,j} = \sigma^{-1}(D_i \cap D_j).
	\]
	Note that $\sigma^{-1}(D_i)= \overline{\Droc_i}$.
\end{notation}

The {\em Milnor fibration at radius zero}  is the locally trivial topological 
fibration given by the map
\[
\arg(\fro)|_{\partial \Yro} : \partial \Yro \to \R / 2 \pi \Z.
\]

\begin{definition}\label{def:angled_ray}
	For $\theta \in \R/2\pi \Z$, we define the {\em Milnor ray} at angle 
	$\theta$, as  
	\[
	\Tu^*_\theta = \arg(f)^{-1}(\theta) \subset \Tu^*,
	\]
	as well as the subsets of $\Yro$:
	\begin{equation}
		\begin{aligned}
			\Yro_\theta &= \arg(\fro)^{-1}(\theta), \qquad 
			&\Dro_{i,\theta} &= \Yro_\theta \cap \Dro_i, \\
			\Droc_{i,\theta} &= \Yro_\theta \cap \Droc_i, 
			&\Dro_{i,j,\theta} &=  \Yro_\theta \cap \Dro_{i,j}.
		\end{aligned}
	\end{equation}
\end{definition}

\subsubsection*{The invariant A'Campo space}
\begin{definition}
For $i \in \V_\Upsilon$, let 
\[
\hat{\sigma}_i: \widehat D_i \to D_i
\]
 be the
real oriented blow-up of $D_i$ at intersection points
$D_i \cap D_\ell$ corresponding to adjacent edges $i\to \ell$
or $\ell \to i$ in $\Upsilon$ (see \cref{fig:hatD_i}). 
\end{definition}

\begin{notation}
	If $i$ has a neighbor in $\Gamma \setminus \Upsilon$ (equivalently if the 
	minimum of $h_i$ is $0$), denote by $q_{i,0} \in \hat{D}_i$ the unique 
	preimage point 
	$\hat{\sigma}_i^{-1}(\{v_i 
	=0\})$.
\end{notation}

 \begin{figure}
 	\centering
 	\Large
	\resizebox{0.7\textwidth}{!}{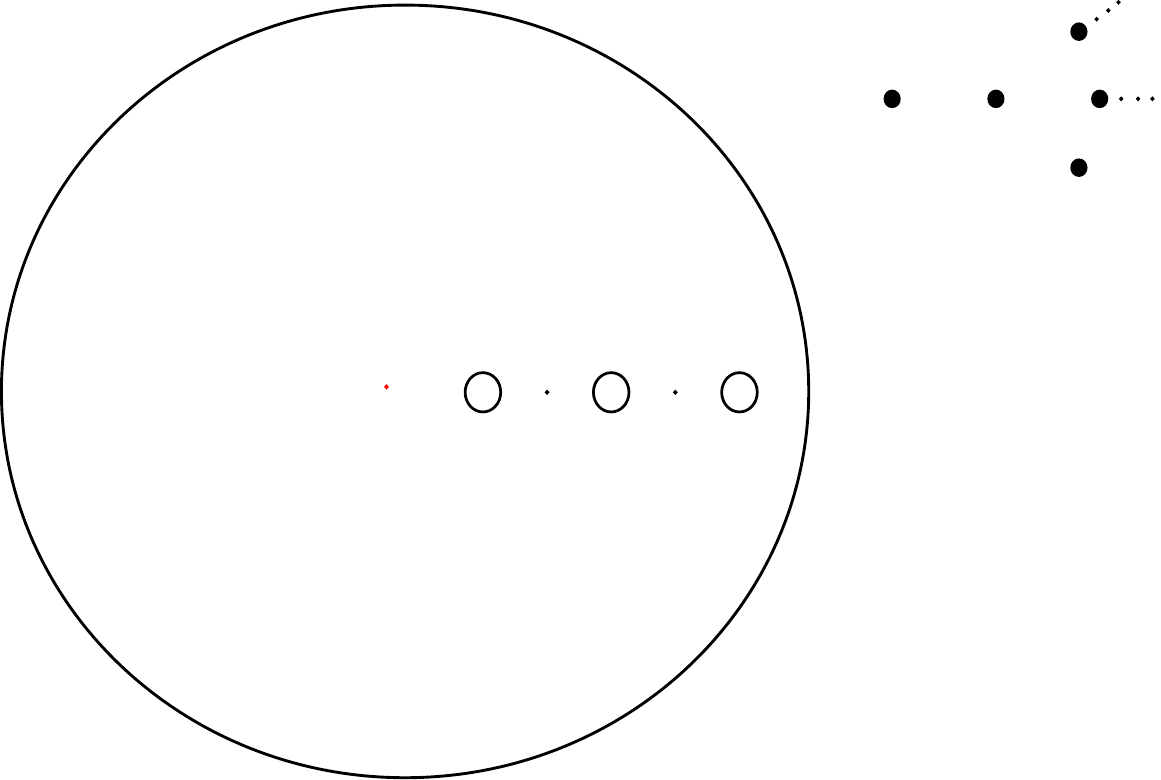}
	\caption{A representation of $\hat{D}_i$ corresponding to a vertex $i$ with 
	four neighbor invariant vertices $j,k_1,k_2$ and $k_3$ (with $j \to i$); 
	and one  vertex in $\Gamma \setminus \Upsilon$ (in red). We can also see 
	the two points of 
	intersection, $p_1$ and $p_2$, of $\hat{D}_i$ with the strict transform of 
	the generic polar curve. In this case $h_i(k_s) = s$ for $s=1,2,3$.}
	\label{fig:hatD_i}
\end{figure}
\begin{block}
In order to construct the invariant A'Campo space, we start by defining a
possibly singular Seifert fibration over $D_i$ for each $i \in \V_\Upsilon$.
Let
\[
  \Ainv_i
\]
be the quotient space of $\Dro_i$
obtained by collapsing each connected component of
$\Dro_{ik,\theta}$ to a point, for each $\theta \in \R/2\pi\Z$, and
for each edge $ik$ in $\Gamma \setminus \Upsilon$.
Note that each invariant $i \in \V$ (recall \Cref{def:upsilon} can have at most 
one
adjacent edge that is not in $\Upsilon$.
As a result, we have a Seifert fibration
\[
  \Pi_i: \Ainv_i \to \widehat D_i
\]
which has at most one singular fiber 
\[
O_i = \Pi_i^{-1}(q_{i,0}).
\]
The function
$\arg(f)$ induces a horizontal fibration
\[
  H_i : \Ainv_i \to \R / 2\pi\Z.
\]
For each $\theta \in \R/2\pi\Z$, denote by
$\Ainv_{i,\theta} = H_i^{-1}(\theta)$ the
fiber.
Then, the restriction of $\Pi_i$
\[
  \Pi_{i,\theta}: \Ainv_{i,\theta} \to \widehat D_i
\]
is a branched covering of degree $m_i$ with branching locus $O_{i,\theta} = O_i 
\cap \Ainv_{i,\theta}$. 
Denote by
\[
  G_{i,\theta}: \Ainv_{i,\theta} \to \Ainv_{i,\theta}
\]
the generator of the Galois group of this cover obtained by following
Seifert fibers in the direction where $H_i$ increases.
Then, $\Ainv_i$ is the mapping torus (using an interval of length $2\pi$)
of $G_{i,\theta}$ for any $\theta$, 
and $H_i$ is the usual projection to $\R/2\pi\Z = [0,2\pi]/0\sim 2\pi$.

For any neighbor $k$ of $i$ in $\Upsilon$ we define $\partial_k \Ainv_i$ as the 
connected component of $\partial \Ainv_i$ corresponding to the edge $ik$. 
Similarly $\partial_k \Ainv_{i,\theta}$ is the part of $\partial 
\Ainv_{i,\theta}$ formed by the collection of $\gcd(m_i,m_k)$ circles that lie 
in 
$\partial_k \Ainv_i$.
\end{block}

\begin{block}\label{blc:coords_uro}
 Let $j\to i$ be an edge in 
$\Upsilon$.
 We use the coordinates $(u_j, v_{ji})$ on $U_{ji} = U_j \cap \tU_i$ where 
 $v_{ji} = v_j - h_j(i) = 
 s_{ji} e^{i\beta{ji}}$ and  $u_j = r_j e^{i\alpha_j}$. We use coordinates 
 $(r_j,\alpha_j, s_{ji}, \beta_{ji})$ for 
\[
\Uro_{ji} = \sigma^{-1}(U_{ji}).
\]  
In these coordinates, the boundary component $\partial_i \Ainv_j$ 
(corresponding to the edge $ij$) has 
coordinates $(\alpha_j,\beta_{ji})$. We also use coordinates $(\tu_i, \tv_{i}$ 
on $U_{ji} = U_j \cap \tU_i$, in this case the induced coordinates on 
$\Uro_{ji}$ are $(\tr_i,\talpha_i, \ts_i, \tbeta_i)$.

\end{block}
Now, define the space
\[
  \Ainv_{ji} = [0,1] \times \R/2\pi\Z \times \R/2\pi\Z.
\]
With coordinates $t, \alpha, \beta$. The space $\Ainv_{ji}$ is the 
interpolating piece of the A'Campo space between 
the pieces $\Ainv_j $ and $\Ainv_i$. We also have a horizontal fibration on 
this 
piece
\[
\begin{split}
	H_{ji} : \Ainv_{ji} &\to \R / 2 \pi \Z \\
	(t, \alpha, \beta) & \mapsto \pi M_{ji} + m_j \alpha + m_i \beta
\end{split}
\]
Recall that by \cref{eq:Mik}, $M_{ji} \in \Z$. Observe that 
\[
H_{ji}^{-1} (\theta) = \set{(t, \alpha, \beta) \in \Ainv_{ji}}{ m_j \alpha + 
m_i \beta = \theta 
- \pi M_{ji}}
\]
is a collection of $m_{ij}=\gcd(m_i,m_j)$ cylinders. We set $\Ainv_{ji,\theta} 
= H_{ji}^{-1} (\theta)$.
\begin{figure}[ht]
\begin{center}
\input{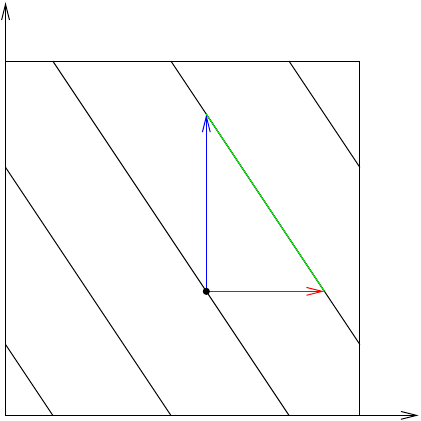_t}
\caption{Suppose that $i\to k$ is an edge, and that
$m_i = 3$ and $m_k = 2$. We see the projection of $\Ainv_{ik}$
to $(\R/2\pi\Z)^2$,
with coordinates
$(\tilde\alpha_k, \tilde\beta_k) = (\beta_{ik}, \alpha_i)$.
The black subspace is the intersection with a Milnor fiber, given as a level
set of $m_i \tilde \alpha_i + m_k \tilde\beta_k$.
The geometric monodromy $G_k$ maps
a point with coordinates $(\tilde \alpha_k, \tilde\beta_k)$ to a point
with coordinates $(\tilde \alpha_k + 1/3, \tilde\beta_k)$, as indicated by
the red arrow. Simiarly, $G_i$ adds $1/2$ to the $\alpha_i$
coordinate. If we fix some point $x \in (\R/2\pi\Z)^2$, we have a segment
in $\{x\}\times [0,1] \subset \Ainv_{ik}$. Projecting the image of this
segment by $G_{ik} \to (\R/2\pi\Z)^2$, we get the green segment in the
picture, interpolating between $G_i$ and $G_k$.}
\label{fig:int_torus}
\end{center}
\end{figure}
We define also the map 
\[
\begin{split}
G_{ji} : \Ainv_{ji} & \to \Ainv_{ji} \\
(t,\alpha,\beta) & \mapsto \left(t, \alpha + 2\pi \frac{1-t}{m_j}, \beta + 2 
\pi 
\frac{t}{m_i} \right).
\end{split}
\]
It follows from construction that $ G_{ji} |_{\Ainv_{ji,\theta} }$ has 
$\Ainv_{ji,\theta} $ as its image and so it defines a map
\[
G_{ji, \theta} = G_{ji} |_{\Ainv_{ji,\theta} }: \Ainv_{ji,\theta}  \to 
\Ainv_{ji,\theta} . 
\]
We define the gluing maps
\begin{equation}\label{eq:glue_ji}
\begin{split}
	\partial_i \Ainv_j  & \to \{0\} \times \left( \R / 2 \pi 
	\Z \right)^2  \subset \Ainv_{ji}\\
	(\alpha_j,\beta_{ji}) & \mapsto (0, \alpha_j, \beta_{ji})
\end{split}
\end{equation}
and
\begin{equation}\label{eq:glue_ij}
\begin{split}
	\partial_j \Ainv_i  & \to \{1\} \times \left( \R / 2 \pi 
	\Z \right)^2 \subset \Ainv_{ji} \\
	(\talpha_i,\tbeta_{i}) & \mapsto (0, \tbeta_i, \talpha_{i})
\end{split}
\end{equation}
where $\partial_i \Ainv_j$ is the connected component of $\partial \Ainv_j$ 
that corresponds to the edge that connects $i$ with $j$ in $\Gamma$.
\begin{definition}\label{def:Ainv}
	We define the {\em invariant A'Campo space} as the quotient
	\[
	\Ainv = \left( \coprod_{i \in \V} \Ainv_i \amalg \coprod_{ji \in e(\Gamma)} 
	\Ainv_{ji}\right) / \sim
	\]
	where $\sim$ is the equivalence relation established by 
	\cref{eq:glue_ji,eq:glue_ij}. It follows from construction that the maps 
	$H_i$ and $H_{ji}$ glue together to form a map
	\[
	H: \Ainv \to \R / 2 \pi \Z.
	\]
	We denote $\Ainv_\theta = H^{-1}(\theta)$.
	Similarly, the maps $G_{i}$ and $G_{ji}$ glue together to a map
	\[
	G: \Ainv \to \Ainv. 
	\]
	This map leaves $\Ainv_\theta$ invariant for each $\theta \in \R / 2 \pi 
	\Z$, inducing a monodromy map
	\[
	G_\theta : \Ainv_\theta \to \Ainv_\theta.
	\]
\end{definition}

 \begin{figure}
	\centering
	\resizebox{0.8\textwidth}{!}{
\begingroup%
  \makeatletter%
  \providecommand\color[2][]{%
    \errmessage{(Inkscape) Color is used for the text in Inkscape, but the package 'color.sty' is not loaded}%
    \renewcommand\color[2][]{}%
  }%
  \providecommand\transparent[1]{%
    \errmessage{(Inkscape) Transparency is used (non-zero) for the text in Inkscape, but the package 'transparent.sty' is not loaded}%
    \renewcommand\transparent[1]{}%
  }%
  \providecommand\rotatebox[2]{#2}%
  \newcommand*\fsize{\dimexpr\f@size pt\relax}%
  \newcommand*\lineheight[1]{\fontsize{\fsize}{#1\fsize}\selectfont}%
  \ifx\svgwidth\undefined%
    \setlength{\unitlength}{427.86276509bp}%
    \ifx\svgscale\undefined%
      \relax%
    \else%
      \setlength{\unitlength}{\unitlength * \real{\svgscale}}%
    \fi%
  \else%
    \setlength{\unitlength}{\svgwidth}%
  \fi%
  \global\let\svgwidth\undefined%
  \global\let\svgscale\undefined%
  \makeatother%
  \begin{picture}(1,0.36695923)%
    \lineheight{1}%
    \setlength\tabcolsep{0pt}%
    \put(0,0){\includegraphics[width=\unitlength,page=1]{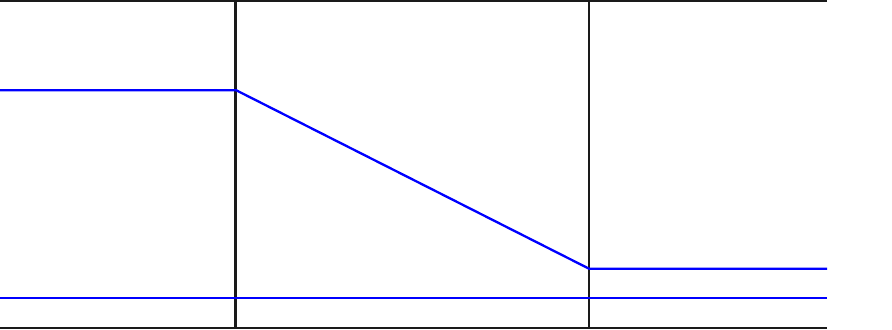}}%
    \put(0.39309182,0.3188148){\color[rgb]{0,0,0}\makebox(0,0)[lt]{\lineheight{1.25}\smash{\begin{tabular}[t]{l}$\Ainv_{ij,\theta}$\end{tabular}}}}%
    \put(0.93562954,0.03029298){\color[rgb]{0,0,0}\makebox(0,0)[lt]{\lineheight{1.25}\smash{\begin{tabular}[t]{l}$S_\theta$\end{tabular}}}}%
    \put(0.9332124,0.06324419){\color[rgb]{0,0,0}\makebox(0,0)[lt]{\lineheight{1.25}\smash{\begin{tabular}[t]{l}$G_\theta(S_\theta)$\end{tabular}}}}%
  \end{picture}%
\endgroup%
}
	\caption{We see a prong $S_\theta$ traversing an interpolating piece 
	$\Ainv_{ij,\theta}$ and its image $G_\theta(S_\theta)$ by the monodromy.}
	\label{fig:acampo_monodromy}
\end{figure}

\subsection{Description of the flow} 
\label{ss:flow_description}

Having constructed the invariant A'Campo space $\Ainv$ as a
fibration $H: \Ainv \to \R/2\pi\Z$, we now introduce a
dynamical system on this space. We will define a 
vector 
field
$\xiinv$ that is \emph{horizontal} with respect to this
fibration (i.e., tangent to the fibers $\Ainv_\theta$). The
trajectories of this flow will encode the topology of the
Milnor fiber.  This vector field coincides with the one defined in 
\cite{PSspine} except over the exceptional divisor $D_0$ corresponding to the 
first 
blow up.

The flow is first defined on the base spaces $\hat{D}_i$ of the
Seifert fibrations using a meromorphic function $f_i$, and
then lifted horizontally to the pieces $\Ainv_i$. Finally,
we will show how these local flows glue together to form a
global, well-defined vector field $\xiinv$ on the entire space $\Ainv$.

Let $i \in \V$. Then $\pi^*f$ vanishes with order $m_i$ along $D_i$, and
$\pi^*x$ vanishes with order $c_{0,i}$ along $D_i$. As a result, we have
a meromorphic function on $D_i$, defined by
\begin{equation} \label{eq:f_i_def}
  f_i(v_i) = \frac{\pi^*f|_{U_i}^{c_{0,i}}}{\pi^*x|_{U_i}^{m_i}} (0,v_i).
\end{equation}
This function takes finite nonzero values on $D_i^\circ$, and has
valuation $m_k c_{0,i} - m_i c_{0,k}$ at the intersection point
$D_i \cap D_k$, for any $k \in \W$.
In fact, if $k \in \W^+_i$, then $m_k c_{0,i} - m_i c_{0,k} = n_{ik} \geq 0$
by \cite[Lemma 3.4.7]{PSspine}, and if we have a vertex $j\in \V$ such that
$j \to i$, then $f_i$ has a pole of order $n_{ij}$ at $D_i \cap D_j$.
Denote by $d_i$ the leading term of this polynomial in the variable
$v_i$. Since $\pi^*f$ is real by \cref{eq:a_i_k} and $\pi^*x$ is also real in 
these coordinates, then $d_i \in \R$. Then
\begin{equation} \label{eq:f_i_a_i}
  f_i(v_i)
  = d_i\prod_{\ell \in \V^+_i}(v_i - h_i(\ell))^{n_{i\ell}}
  = d_i v_i^{n_{ji}} + \lot
\end{equation}
where $\lot$ stands for lower order terms in $v_i$. In the case where $i=0$, 
there is no $j \to 0$ but still we have a similar expression
\begin{equation} \label{eq:f_0_a_0}
	f_0(v_0)
	= d_0\prod_{\ell \in \V^+_0}(v_0 - h_0(\ell))^{n_{i\ell}}
	= d_0 v_0^{m_0} + \lot
\end{equation}

\subsubsection*{A vector field on $\hat{D}_i$}
Let $$\xi_i = - \nabla \log 
|f_i|$$
be the logarithmic gradient vector field of $|f_i|$ defined on $D^\circ_i$. We 
claim that by 
\cref{eq:f_i_a_i} (see also \cref{fig:hatD_i}),
the vector field has exactly one zero $q_{i, a}$ lying on the real segment 
between the 
zeroes $a$ and $a+1$ for $ 1 \leq a \leq \max (h_i -1) = p_i$ (recall 
\Cref{def:label_function}). Indeed, $f_i'$ has $n_{ij}-1$ zeroes (one less 
zero than $f_i$) counted with multiplicity; it has a zero of order 
$n_{i\ell}-1$ at $h_i(\ell)$ for each $a$ and it has at least 
one zero between each $h_i(\ell)$ on the real line because it is a real 
polynomial, so the claim follows. We give the 
same name $q_{i,a}$ to the 
zeros of pullback vector field on $\hat{D}_i 
\setminus \partial \hat{D}_i$. 

If $i$ has a dead branch attached to it, we denote by $q_{i,0}$ the point that 
has coordinate $v_i = 0$ in this chart.

If $i=0$, we denote by $q_{i,-1}$ the point that has coordinate $\tilde{v}_0=0$ 
in $\tilde{U}_0 \cap D_0$.
\begin{notation}
 	We denote by $\Sigma_i \subset D_i$ the union all the $q_{i,a}$ points
 	defined above. Where we recall that $q_{i,0}$ is only there if $i$ has a 
 	dead branched attached to it. 
 	
 	We also  define the set $\hat{\Sigma}_i = \hat{\sigma}_i^{-1}(\Sigma_i)$.
\end{notation}
\begin{remark}\label{rem:well_defined}
	The functions $v_i$ and $f_i$ are well defined on $D_i \cap U_i$. From now 
	on, whenever it is convenient we denote by the same symbols the pullback 
	functions $\hat{\sigma}_i^*v_i$ and $\hat{\sigma}_i^*f_i$ defined on $\hat
	D_i$.  We consider the vector field 
\[
\hat{\sigma}_i^{*} \xi_i = \hat{\sigma}_i^{*} \left(- \nabla \log 
|f_i|\right)
\]
which is well defined on $\hat D_i \setminus \partial \hat D_i $. We 
are going to rescale this vector field now so that it has a non-zero extension 
to the boundary components of $\hat{D}_i$. In order to do so, we consider a 
function
\[
\Psi_i: \hat D_i \to \R_+
\]
which takes only positive values and,

\begin{enumerate}
\item in a collar neighborhood of $\partial_k 
\hat{D}_i$, takes the value $|f_i|^{1/n_{ik}}$ for each neighbor vertex $k$ 
with 
$i \to k$
\item in a collar neighborhood of $\partial_j 
\hat{D}_i$, it takes the value $|f_i|^{-1/n_{ij}}$ near $\partial_j \hat{D}_i$ 
where  
$j$ 
is the only neighbor with $j \to i$, if it exists. 

\item when $i=0$ we add the condition that $\Psi_0 = |\tilde{v}_0|^2$ in a 
neighborhood of $0 \in \tilde{U}_0 \cap D_0$. 

\item \label{it:noninv_point} equals the function $|v_i|^{2\left( 1 - 
\frac{m_{ik}}{m_i} \right)}$ in 
a small neighborhood of 
$q_{i,0} 
\in \hat{D}_i$ where $k$ is the only adjacent neighbor with $h_i(k)=0$ in case 
it exists.
\end{enumerate} 
\end{remark}
\begin{prop} \label{prop:hat_xi}
	The vector field
	\[
	 \Psi_i \hat{\sigma}_i^{*} \xi_i
	\]
	extends to all of $ \hat{D}_i$ as a vector field $\hat \xi_i$, and it 
	points inwards along 
	$\partial_j \hat{D}_i$ when $j \to i$ and outwards along $\partial_k 
	\hat{D}_i$ for the neighbors $k$ with $i \to k$. Furthermore,
	\begin{enumerate}
	\item \label{it:zeroset_sigma_i}the zero set of 
	$\hat{\xi}_i$ is $\hat{\Sigma}_i$,
	\item \label{it:saddle} for $1 \leq a \leq p_i$, the singularity $q_{i,a}$ 
	is a non 
	degenerate saddle point of the vector field, 
	\item \label{it:repller} at $\tilde{v}_0 = 0$ on $\tilde{U}_0 \cap D_0$, 
	the vector field 
	$\hat{\xi}_0$ has a repeller, 
	and
	\item \label{it:traj_tangent} the trajectories of 
	$\hat{\xi}_i$ are tangent to the level sets of the function 
	$\arg(\hat\sigma_i^*f_i)$ or, 
	equivalently, the level sets of $\arg(\hat\sigma_i^*f_i)$ are disjoint 
	union of 
	trajectories of $\hat{\xi}_i$.
\end{enumerate}
\end{prop}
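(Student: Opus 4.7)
The plan is to verify each assertion of the proposition by local analysis in the charts of Section \ref{ss:analytic_chart}, leveraging that $f_i$ is a meromorphic function on $D_i \simeq \mathbb{P}^1$ with the explicit rational expression \cref{eq:f_i_a_i}.

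I would first establish (iv), which is the most conceptual step. On $D^\circ_i$ the function $f_i$ is holomorphic and nonvanishing, so locally $\log f_i$ is a well-defined holomorphic function whose real and imaginary parts are $\log|f_i|$ and $\arg f_i$. The Cauchy--Riemann equations then give $\nabla \log|f_i| \perp \nabla \arg f_i$, so $-\nabla \log|f_i|$ is tangent to the level sets of $\arg f_i$; pulling back by $\hat\sigma_i$ and multiplying by the positive function $\Psi_i$ preserves both direction and tangency, so (iv) holds wherever $\hat\xi_i$ is defined.

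For (i) and (ii), interior zeros of $\hat\xi_i$ correspond to zeros of $f_i'$ at points where $f_i$ itself is nonzero. Counting zeros of $f_i'$ as a rational function on $\mathbb{P}^1$, with forced zero of order $n_{i\ell}-1$ at each $h_i(\ell)$, yields exactly $p_i$ additional simple real zeros, one in each interval $(h_i(a), h_i(a+1))$; these are the $q_{i,a}$ for $1 \le a \le p_i$. Simplicity forces $f_i''(q_{i,a}) \neq 0$, and the Taylor expansion $\log f_i - \log f_i(q_{i,a}) = \tfrac12 c (v_i - q_{i,a})^2 + O((v_i - q_{i,a})^3)$ with $c = f_i''(q_{i,a})/f_i(q_{i,a}) \neq 0$ shows that $\log|f_i|$ is Morse of signature $(1,1)$ at $q_{i,a}$, producing a non-degenerate saddle of $\hat\xi_i$. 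The zeros at $q_{i,0}$ (dead-branch case) and at $\tilde v_0 = 0$ (when $i=0$) are not interior critical points of $f_i$; they are produced by the rescaling $\Psi_i$, as discussed next.

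The boundary analysis is local near each intersection point. For $k \in \V_i^+$, write $v_i - h_i(k) = \rho e^{i\phi}$; then $f_i = (v_i - h_i(k))^{n_{ik}} g$ with $g$ holomorphic and nonvanishing at $h_i(k)$, so $-\nabla \log|f_i| = -(n_{ik}/\rho)\,\partial_\rho + O(1)$. Multiplying by $\Psi_i = |f_i|^{1/n_{ik}} \sim \rho\,|g|^{1/n_{ik}}$ cancels the singular $1/\rho$ and yields a smooth extension to $\{\rho = 0\}$ with nonzero value $-n_{ik}|g|^{1/n_{ik}}\partial_\rho$, which points outward across $\partial_k \hat{D}_i$. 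The case $j \to i$ is symmetric: $f_i$ has a pole of order $n_{ij}$, and the reciprocal rescaling $|f_i|^{-1/n_{ij}}$ produces an inward-pointing extension along $\partial_j \hat{D}_i$. At $q_{i,0}$, the choice $\Psi_i = |v_i|^{2(1 - m_{ik}/m_i)}$ in condition \cref{it:noninv_point} of the definition of $\Psi_i$ is tuned exactly to cancel the fractional singularity of $|f_i|$ at $v_i = 0$, producing a zero of $\hat\xi_i$ there; this argument follows \cite[Section 6]{PSspine}. Finally, for (iii), in the chart $\tilde U_0$ the relation $\tilde v_0 = 1/v_0$ together with \cref{eq:f_0_a_0} gives $\log|f_0| = -m_0 \log|\tilde v_0| + \text{smooth}$, so $-\nabla \log|f_0|$ is radially outward with magnitude $\sim m_0/|\tilde v_0|$; multiplying by $\Psi_0 = |\tilde v_0|^2$ gives a smooth vector field vanishing at $\tilde v_0 = 0$ with linearization $m_0(\tilde v_0\,\partial_{\tilde v_0} + \bar{\tilde v}_0\,\partial_{\bar{\tilde v}_0})$, whose eigenvalues are positive, hence a repeller. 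The main obstacle throughout is the bookkeeping in matching the exponents of $\Psi_i$ to the local orders of $f_i$ at each boundary component and at the special points $q_{i,0}$ and $\tilde v_0 = 0$, which is exactly what guarantees both the smooth extension and the correct direction of the normal component.
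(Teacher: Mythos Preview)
Your proposal is correct and follows essentially the same approach as the paper's proof: local analysis in each regime (interior critical points, boundary circles from zeros and poles of $f_i$, the dead-branch point $q_{i,0}$, and the point at infinity for $i=0$), using the factorization $f_i = (v_i - h_i(k))^{n_{ik}} g$ to compute $-\nabla\log|f_i|$ and then checking that the prescribed $\Psi_i$ cancels the singular part with the correct sign. Your treatment is in places a bit more explicit than the paper's---you spell out the Cauchy--Riemann argument for (iv), the Morse/Taylor analysis for (ii), and the linearization at the repeller for (iii)---whereas the paper handles these points more tersely (e.g.\ ``$\overline{f_i'/f_i}$ has a simple saddle'' and ``$\hat\xi_0$ has the same behavior as the vector field $\tilde v_0$''), but the underlying computations are identical.
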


\begin{proof}
 Let $k$ be a neighbor with $i \to k$. Near the intersection point $D_i \cap 
 D_k$ we consider the coordinate $v_{ik} = v_i - h_i(k)$. By \cref{eq:f_i_a_i}, 
 the function $f_i$ is of the form  $v_{ik}^{n_{ik}} g(v_{ik})$ near $D_i \cap 
 D_k$ where $g(v_{ik})$ is a  unit. Hence vector field 
 $ \xi_i$ is of the form
\begin{equation}\label{eq:xi_i}
\xi_i
=
-\nabla \log |f_i|
=
-\frac{n_{ik}}{\widebar{v_{ik}}} - \frac{\bar{g}'}{g} 
\end{equation}
So, in polar coordinates $(s_{ik}, 
\beta_{ik})$ with $v_{ik} = s_{ik} e^{i  \beta_{ik}}$, the vector field 
$\hat{\sigma}_i^{*} \xi_i$  looks like
\[
-
\left(
\begin{matrix}
	n_{ik} s_{ik}^{-1} \\
	0
\end{matrix}
\right)
- \sigma^* \left(\frac{\bar{g}'}{g}\right).
\]
That is, its only non-zero coordinate has a pole of order $1$ at $\partial_k 
\hat{D}_i$. Since the function $|f_i|^{1/n_{ik}}$ extends to and vanishes with 
order $1$ at
$\partial_k \hat{D}_i$, we find that  $\xi_i = \Psi_i \hat{\sigma}_i^{*} \xi_i$ 
extends as a non-zero vector field to $\partial_k \hat{D}_i$. Moreover, since 
$n_{ik} >0$, this vector field points outwards.

If $j \to i$, the argument applies verbatim using the coordinate $\tv_{ij}$ 
instead of $v_{ik}$. The only difference is that $f_i$ has a pole of order 
$n_{ij}$ instead of a zero and so, in the end, the vector field points inwards. 
If $i=0$, we recall the expression \cref{eq:f_0_a_0} and observe that $f_0$ has 
a pole of order $m_0$ at infinity and so $\xi_0$ is transverse and points 
inwards to every sufficiently large circle centered at $0$ in the chart $U_0$.

By construction, the vector field restricted to $\hat{D}_i \setminus 
\left(\partial 
\hat{D}_i \cup \{q_{i,0}\} \right)$ vanishes on $\sigma_i^{-1}(\Sigma_i)$. That 
it also vanishes on $q_{i,0}$ follows 
from 
\Cref{rem:well_defined}, \cref{it:noninv_point}. This proves 
\cref{it:zeroset_sigma_i}.

Item \cref{it:saddle},  follows from the fact that 
$f_i'(v_i)$ has a simple zero at $q_{i,a}$ and $f_i(q_{i,a}) \neq 0$, so 
$\overline{\frac{f_i'}{f_i}}$ has a simple saddle point at $q_{i,a}$.

In order to show \cref{it:repller}, we recall \cref{eq:f_0_a_0} and so 
\cref{eq:xi_i} becomes
\[
\xi_0
=
-\nabla \log |f_0|
=  \frac{m_0}{\widebar{\widetilde{v_0}}} - \frac{\bar{g}'}{g}
\]
and so $\hat{\xi}_0$ has the same behavior as the vector field $\tilde{v}_0$ 
near 
$q_{0,-1}$.

The last statement \cref{it:traj_tangent} follows from the expression in 
\cref{eq:xi_i} which 
coincides with $\hat{\xi}_i$ in the interior of $\hat{D}_i$ up to a positive 
scalar.
\end{proof}

\begin{definition}\label{def:hatxi_i}
	We denote by $\hat{\xi}_i$ the extension to all $\hat{D}_i$ of the vector 
	field whose existence is  given by the previous lemma.
\end{definition}

Let $\ell \in \V_i^+$ be an invariant vertex in $\Upsilon$ and let $j$ be the 
only vertex 
with $j \to i$. Then, by construction, the trajectories of the vector field 
$\hat{\xi}_i$  that end at $\partial_\ell \hat{D}_i$, start 
 either at $\partial_j 
\hat{D}_i$ or at a point in $\hat{\Sigma}_i $. Let
\begin{equation}\label{eq:Deltail}
	\hat\Delta_{i\ell}:\partial_\ell\hat{D}_i \dashrightarrow 
	\partial_j\hat{D}_i
\end{equation}
be the rational map that takes a point $\hat{q} \in \partial_\ell\hat{D}_i$ to 
the begining  in  $\partial_j \hat{D}_i$ of the trajectory
of $\hat{\xi}_i$ that ends at $\hat{q}$. Since $\hat{\Sigma}_i$ is finite, this 
is well defined on an open dense set of $\partial_\ell \hat{D}_i$. The 
following lemma 
shows that this map is linear and gives an explicit formula for it.

\begin{lemma}\label{lem:formula_Deltaiell}
In the situation above, the map $\hat\Delta_{i\ell}$ is a rational linear map 
defined on $\partial_\ell \hat{D}_i \setminus \{0,\pi\}$ and, when it is 
defined, it is given by
\[
\hat\Delta_{ik}(\hat{q}) = \beta_{ik}(\hat{q})\frac{n_{ik}}{n_{ij}} + 2 \pi 
\sum_{\ell 
\in 
	S_{ik}} \frac{n_{i\ell}}{2 n_{ij}}
\]
if $0 < \arg(v_{ik}(\hat q)) < \pi$, and
\[
\hat\Delta_{ik}(\hat{q}) = \beta_{ik}(\hat{q})\frac{n_{ik}}{n_{ij}} - 2 \pi 
\sum_{\ell 
\in 
	S_{ik}} \frac{n_{i\ell}}{2 n_{ij}}
\]
if $-\pi < \arg(v_{ik}(\hat q)) < 0$.
\end{lemma}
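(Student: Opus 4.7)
The driving input is Proposition~\ref{prop:hat_xi}\cref{it:traj_tangent}: trajectories of $\hat\xi_i$ lie on level sets of $\arg(f_i)$. Therefore $\hat\Delta_{ik}$ preserves the value of $\arg(f_i)$, so it suffices to express $\arg(f_i)$ as a linear function of the angular coordinate on each of $\partial_k\hat D_i$ and $\partial_j\hat D_i$ and to equate the two expressions. The main subtlety is that the second expression comes from an $n_{ij}$-fold branched cover, so one must select the correct branch using the conjugation symmetry of $f_i$.

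\textbf{Asymptotics near $\partial_k \hat D_i$.} Writing $v_{ik} = v_i - h_i(k) = s_{ik} e^{i\beta_{ik}}$ and factoring \cref{eq:f_i_a_i} as
$$f_i = d_i\, v_{ik}^{n_{ik}} \prod_{\ell \in \V_i^+\setminus\{k\}} \bigl(v_{ik} + h_i(k) - h_i(\ell)\bigr)^{n_{i\ell}},$$
the limit $s_{ik}\to 0$ leaves only the $v_{ik}^{n_{ik}}$-factor depending on $\hat q$, giving
$$\arg(f_i)(\hat q) \equiv n_{ik}\,\beta_{ik}(\hat q) + \arg(d_i) + \pi \!\!\sum_{\ell \in S_{ik}}\!\! n_{i\ell} \pmod{2\pi},$$
since $h_i(k) - h_i(\ell)$ is positive precisely when $\ell \notin S_{ik}$ and negative when $\ell \in S_{ik}$. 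The sign $\pm\pi$ for the negative reals is fixed by continuity: because $d_i \in \R$ (see \cref{eq:f_i_a_i}) the flow is conjugation-symmetric, so trajectories starting in the upper (resp.\ lower) half of $\hat D_i$ remain in that half. Hence one takes $+\pi$ when $\beta_{ik}\in(0,\pi)$ and $-\pi$ when $\beta_{ik}\in(-\pi,0)$, which is exactly the bifurcation in the statement.

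\textbf{Asymptotics near $\partial_j \hat D_i$ and conclusion.} By \cref{eq:f_i_a_i}, $f_i = d_i v_i^{n_{ij}} + \lot$ as $v_i\to \infty$, so on this boundary circle
$$\arg(f_i) \equiv n_{ij}\,\arg(v_i) + \arg(d_i) \pmod{2\pi}.$$
The chart change of \cref{eq:Y_sim_i}--\cref{eq:Y_sim_ij} identifies $\arg(v_i)$, up to the orientation convention of block~\ref{blc:coords_uro}, with the angular coordinate on $\partial_j\hat D_i$. Equating the two expressions for $\arg(f_i)$ cancels $\arg(d_i)$ and, after solving for the boundary coordinate, produces the claimed linear formula with slope $n_{ik}/n_{ij}$ and constant $\pm\pi\sum_{\ell\in S_{ik}} n_{i\ell}/n_{ij}$; the correct branch among the $n_{ij}$ preimages is precisely the one selected by the conjugation-symmetry argument above.

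\textbf{Domain of definition.} The excluded values $\beta_{ik}\in\{0,\pi\}$ correspond to $v_{ik}\in\R$: at those angles the trajectory ending at $\hat q$ emanates from one of the saddles $q_{i,a}\in\Sigma_i$ on the real segment of $D_i$ (Proposition~\ref{prop:hat_xi}\cref{it:saddle}), rather than from $\partial_j\hat D_i$, which is exactly why $\hat\Delta_{ik}$ is undefined there. The only real technicality throughout is the branch bookkeeping for $\arg$; the remainder is a direct computation from the explicit form of $f_i$.
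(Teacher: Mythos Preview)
Your approach is essentially the paper's: both use \Cref{prop:hat_xi}\cref{it:traj_tangent} to equate $\arg(f_i)$ at the two boundary circles, obtain linearity with slope $n_{ik}/n_{ij}$, and then pin down the constant. The only real difference is in this last step. You compute the constant directly from the asymptotics of each linear factor of $f_i$; the paper instead anchors the constant at the maximal neighbor (where $S_{ik}=\emptyset$, so the real axis gives $\beta_{ik}=0\Rightarrow\hat\Delta_{ik}=0$) and then reads off the shifts for the remaining $k$ from the order in which the images of the half-circles $I^\pm_a$ are arranged on $\partial_j\hat D_i$.

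Your computation is correct, but the justification for selecting the right branch at $\partial_j\hat D_i$ is a bit thin. Conjugation symmetry alone only tells you the constants in the two half-planes are negatives of each other; it does not single out one among the $n_{ij}$ solutions of $n_{ij}\beta_{\text{target}}\equiv c\pmod{2\pi}$. The clean fix is to note that all zeros of $f_i$ lie on the real axis, so $\arg(f_i)$ admits a single-valued continuous lift on each open half-plane of $\hat D_i$; your two asymptotic expressions are then equal as real numbers, not merely modulo $2\pi$, and the branch ambiguity disappears. With that sentence added, your argument is complete and arguably more direct than the paper's interval-ordering step.
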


\begin{proof}
	The fact that it is rational, is explained in the discussion before the 
	lemma. That it is well defined on $\partial_\ell \hat{D}_i \setminus 
	\{0,\pi\}$ follows from the fact that $\Sigma_i$ is on the real axis $
	\{\Im(v_i) = 0\}$ and this real axis is tangent to the flow of $\xi_i$
	(see \cref{fig:stable_D_i}).
	Because of \Cref{prop:hat_xi} \cref{it:traj_tangent}, 
	\[
	\arg(\hat{\sigma}_i^*f_i)(\hat{\Delta}_{i k}(\hat{q})) =	
	\arg(\hat{\sigma}_i^*f_i)(\hat{q}).
	\]
	 Then, the above formula together with \cref{eq:f_i_a_i}, gives an 
	 expression of the form
	\[
	n_{ij} \beta_i(\hat{\Delta}_{i k}(\hat{q})) + a \pi = n_{ik} 
	\beta_{i k}(\hat{q}) +b \pi.
	\]
	where, $a,b \in \{0,1\}$ depending on the signs of the leading coefficients 
	of $f_i$. In particular, it gives that the rational function is of the form
	
	\[
	\hat\Delta_{i\ell}(\hat{q}) = \beta_{ik}(\hat{q})\frac{n_{ik}}{n_{ij}} + 
	\text{independent term}
	\]
	This proves that the rational function is actually linear. 
	Let's find the independent term. Assume, for the moment, that $\ell > k$ 
	for all $k \in \V_i^+$, $k \neq 
	\ell$ and let $\hat{q}$ be such that $\beta_{i\ell}(\hat{q})=0$. Since 
	$\hat{\xi}_i$ is tangent to the real axis, we have 
	$\beta_{i}(\hat\Delta_{i\ell}(\hat{q}))=0$ as well. Substituting in the 
	above formula, gives that the independent term, in this case is $0$. 
	
	Denote by $I^+_a$ the points in the circle $\hat{\sigma_i}^{-1}(a)$ where 
	$0 < \beta_{ih_i^{-1}(a)}<\pi$. Equivalently, define $I^-_a$ corresponds 
	with the arc where	$\pi < \beta_{ih_i^{-1}(a)}<2\pi$. By 
	construction, the intervals 
	$\hat{\Delta}_{i\ell}(I^+_{h_i(k)})$ 
	appear in the order
	\[
	\hat{\Delta}_{i\ell_{p_i+1}}\left(I^+_{p_i+1}\right),
	\hat{\Delta}_{i\ell_{p_i}}\left(I^+_{p_i}\right),
	\ldots, 
	\hat{\Delta}_{i\ell_1}\left(I^+_{1}\right),\hat{\Delta}_{i\ell_1}\left(I^-_{1}
	 \right), 
	\ldots, \hat{\Delta}_{i\ell_{p_i+1}}\left(I^-_{p_i+1}\right)
	\]
   The formula follows, since this gives
   \[
     \lim_{\arg(v_{ik}(q)) \to 0^\pm}
     =
     \pm\pi\sum_{\ell \in S_{ik}} \frac{n_{i\ell}}{n_ij}. \qedhere
   \]
\end{proof}
\begin{lemma}\label{lem:xiinv}
There exists a smooth  vector field $\xiinv_i$ on $\Ainv_i$ which satisfies
\begin{enumerate}
\item \label{it:lift} it is a lift of the vector field $\hat{\xi}_i$, that is, 
\[
D \Pi_i (\xiinv_i (p)) = \hat{\xi}_i(\Pi_i(p))
\]
for all $p \in \Ainv_i$

\item \label{it:tangent} It is tangent to the manifolds $\Ainv_{i,\theta}$, 
that is, it is tangent 
to the horizontal fibration fibration defined by $H_i$, equivalently 
\[
D H_i(\xiinv_i (p)) = 0
\]
for all $p \in \Ainv_i$.
\end{enumerate}
\end{lemma}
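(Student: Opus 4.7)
The plan is to construct $\xiinv_i$ pointwise as the unique horizontal lift of $\hat{\xi}_i$ along the foliation of $\Ainv_i$ by the Milnor fibers $\Ainv_{i,\theta} = H_i^{-1}(\theta)$, and then to verify smoothness. First, I would establish pointwise existence and uniqueness away from the singular Seifert fiber $O_i$: at such a point $p \in \Ainv_i$, the tangent space is three-dimensional with two preferred subspaces, the vertical $\ker D\Pi_i$ (the Seifert direction) and $\ker DH_i$ (the Milnor fiber direction). Because $\Ainv_i$ is the mapping torus of $G_{i,\theta}$ and $G_{i,\theta}$ generates the degree-$m_i$ Galois cover, the function $H_i$ grows at a nonzero constant rate along each Seifert fiber. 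Hence $\ker D\Pi_i$ and $\ker DH_i$ are transverse, so $D\Pi_i$ restricts to an isomorphism from $\ker DH_i$ onto $T_{\Pi_i(p)}\hat D_i$, yielding a unique vector $\xiinv_i(p)$ satisfying both \cref{it:lift} and \cref{it:tangent}.

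Next I would verify smoothness on the regular locus using the coordinates $(r_j,\alpha_j,s_{ji},\beta_{ji})$ of \Cref{blc:coords_uro}. On $\Ainv_i$ (where $r_j = 0$) the projection $\Pi_i$ forgets $\alpha_j$, and by the local expressions of \Cref{ss:local_expressions} combined with \Cref{lem:mi_equations}, $H_i$ is a smooth function whose $\partial/\partial \alpha_j$ derivative is $m_i \neq 0$. Thus the system $D\Pi_i(\xiinv_i) = \hat\xi_i$, $DH_i(\xiinv_i) = 0$ can be solved explicitly for the $\alpha_j$-component by the implicit function theorem, and smoothness of $\xiinv_i$ follows from the smoothness of $\hat\xi_i$ granted by \Cref{prop:hat_xi}. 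The same argument, using the coordinates adapted to $\partial_k \Ainv_i$ for each neighbor $k$ of $i$ in $\Upsilon$, gives smoothness up to the boundary of $\Ainv_i$.

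The main obstacle will be smoothness across the singular Seifert fiber $O_i = \Pi_i^{-1}(q_{i,0})$, which is present when $i$ has a neighbor $k \in \Gamma \setminus \Upsilon$. Here $\Pi_i$ is a multiple fiber of multiplicity $m_i / m_{ik}$ with $m_{ik} = \gcd(m_i,m_k)$, and the pointwise construction of Step 1 breaks down on $O_i$. I would model this neighborhood as the quotient of a finite cover $\widetilde{\Ainv}_i \to \Ainv_i$ trivializing the Seifert fibration near $O_i$. The pullback of $\hat\xi_i$ through the branched cover acquires a factor related to the Jacobian of $v_i \mapsto v_i^{m_i/m_{ik}}$, and the rescaling $\Psi_i = |v_i|^{2(1 - m_{ik}/m_i)}$ stipulated in \Cref{rem:well_defined}\cref{it:noninv_point} is precisely what cancels this Jacobian, producing a vector field on the cover that extends smoothly across the preimage of $q_{i,0}$ and is invariant under the deck transformations. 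It therefore descends to a smooth horizontal lift on $\Ainv_i$ across $O_i$, agreeing with the pointwise construction away from $O_i$ by uniqueness.

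Finally, since the three local constructions (interior, boundary, singular fiber) are each uniquely characterized by \cref{it:lift} and \cref{it:tangent} on their common overlap, they glue to a single smooth vector field $\xiinv_i$ defined on all of $\Ainv_i$. The delicate point, and the one worth emphasizing in the proof, is the verification in the third step that the exponent $2(1 - m_{ik}/m_i)$ is exactly the correct one to compensate for the ramification of the Seifert fibration.
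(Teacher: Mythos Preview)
Your proposal is correct and follows essentially the same approach as the paper: define $\xiinv_i$ as the unique horizontal lift of $\hat\xi_i$ on the complement of the singular Seifert fiber $O_i$, then verify smoothness at $O_i$ by checking that the rescaling exponent $2(1-m_{ik}/m_i)$ in $\Psi_i$ exactly cancels the Jacobian of the local model $w\mapsto w^{m_i/m_{ik}}$ for $\Pi_{i,\theta}$. The paper does this last step slightly more directly than you sketch: rather than passing to a finite cover and invoking deck invariance, it simply sets $\xiinv_i=0$ on $O_i$ and computes the pulled-back vector field in the local coordinate $w$ on $\Ainv_{i,\theta}$, obtaining $\frac{m_{ik}}{m_i}\bar w^{\,m_i/m_{ik}-1}$, which is visibly smooth and vanishes at $w=0$.
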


\begin{proof}
The horizontal fibration defines a connection on the locally trivial fibration
\[
\Pi_i|_{\Ainv_i \setminus O_i} : \Ainv_i \setminus O_i 
\to
\hat{D}_i \setminus \{q_{i,0}\}
\]
so $\xiinv_i$ is determined by  \cref{it:lift,it:tangent} on $\Ainv_i \setminus 
O_i$. We define $\xiinv_i$ to be $0$ on $O_i$. In order to finish the proof we 
need to verify that $\xiinv_i$ is smooth at $O_i$.

Let $k$ be such that $i \to k$ and $k$ lies on the only dead branch of $i$. 
Near a point in $O_{i,\theta}$ the map $\Pi_{i,\theta}$ is of the form $w 
\mapsto w^{\frac{m_i}{m_{ik}}}$. We identify tangent vectors to 
$\Ainv_{i,\theta}$ near $O_{i}$ with complex numbers via the coordinate $w$. On 
another hand, near $q_{i,0}$ the vector 
field 
$\hat{\xi}_i$ is of the form
\[
|v_i|^{2 \left( 1 - \frac{m_{ik}}{m_i} \right)} 
\]
So, when pulled back by $\Pi_{i,\theta}$, near a point in $O_{i,\theta}$ takes 
the form
\begin{equation} \label{eq:bar_w}
	\left(\frac{m_i}{m_{ik}} w^{\frac{m_i}{m_{ik}} - 1}\right)^{-1} \cdot 
	\left(|v_i|^{2 
	\left( 1 - \frac{m_{ik}}{m_i} \right) } \right)_{v_i 
	=w^{\frac{m_i}{m_{ik}}}} 
=
	\frac{m_{ik}}{m_i}\bar{w}^{ \frac{m_i}{m_{ik}} - 1 }.
\end{equation}
See the left hand side of \cref{fig:multiprong}.
\end{proof}
The previous lemma defines a vector field $\xiinv_i$ on each piece $\Ainv_i$ by 
pulling back the vector field $\hat{\xi}_i$ by the functions $\Ainv_{i,\theta} 
\to 
\hat{D}_i$. We denote by $\xiinv_{i,\theta}$ the restriction 
$\xiinv_i|_{\Ainv_{i,\theta}}$.   We define vector fields 
$\xiinv_{ji}$ on $\Ainv_{ji}$ by
\begin{equation}\label{eq:xi_ji}
\hat{\xi}_{ji} = 
\left(
\begin{matrix}
1   \\
0\\
0
\end{matrix}
\right) = \partial_t
\end{equation}

\begin{definition}\label{def:xiinv}
	Using \cite[Theorem 1.4]{hMilnor} together with \Cref{prop:hat_xi}  and 
	\cref{eq:xi_ji},
	we can endow  the topological manifold $\Ainv$ with a $C^\infty$ structure 
	(see also \cite[Section 12.3]{PSspine}). This $C^\infty$ structure is the 
	unique one such that the vector 
	fields $\xiinv_i$ and $\xiinv_{ji}$ glue to a smooth a vector field defined 
	on all $\Ainv$. We denote this vector field by $$\xiinv.$$
	Moreover, since this vector field is tangent to $\Ainv_{\theta}$, we define
	\[\xiinv_\theta = \xiinv|_{\Ainv_{\theta}}.\]
	We define the sets $\Siinv_i = \Pi^{-1}(\hat{\Sigma}_i)$ and   
	$\Siinv_{i,\theta} 
	= \Pi^{-1}_\theta(\hat{\Sigma}_{i})$.
\end{definition}
The following lemma follows from the construction of the vector field $\xiinv$, 
the definition of the $\Sigma_i$ sets and \Cref{lem:xiinv}
\begin{lemma}\label{lem:critical_points_xiinv}
 The zero set of $\xiinv$ is $\Siinv = \bigcup_{i \in \Upsilon} 
 \Siinv_i$. The zero set of $\xiinv_{\theta}$ is $\Siinv_{\theta} = \bigcup_{i 
 \in \Upsilon} 
 \Siinv_{i, \theta}$. Furthermore
 
 \begin{enumerate}
\item \label{it:lift_saddle}
over each $q_{i,a}$ with $1 \leq a \leq p_i$, the vector field $\xiinv$ 
has $m_i$ singularities which are saddle points.

\item \label{it:lift_petri}
over each $q_{i,0}$ (when it exists) the vector field $\xiinv$ has 
$m_{ik}$ singularities which are $m_i/m_{ik}$- pronged singularities. Where $i 
\to k$ and $k$ is the only child of $i$ which lies on a dead branch.
\end{enumerate} 
\end{lemma}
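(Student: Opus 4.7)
The plan is to assemble the statement directly from the construction in \Cref{def:xiinv} together with the fibrewise description of $\Pi_{i,\theta}$, using \Cref{prop:hat_xi} and \Cref{lem:xiinv} as the main inputs.

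First I would dispose of the interpolating pieces. By \cref{eq:xi_ji}, on each $\Ainv_{ji}$ the vector field equals $\partial_t$, which is nowhere zero. Hence $\Siinv \subset \bigcup_{i \in \V_\Upsilon} \Ainv_i$, and it suffices to work piece by piece. For a fixed $i \in \V_\Upsilon$, \Cref{lem:xiinv}\cref{it:lift} says that $\xiinv_i$ is a lift of $\hat\xi_i$ along the submersion $\Pi_i$ on $\Ainv_i \setminus O_i$, so $\xiinv_i(p)=0$ iff $\hat\xi_i(\Pi_i(p))=0$. By \Cref{prop:hat_xi}\cref{it:zeroset_sigma_i} this locus is $\Pi_i^{-1}(\hat\Sigma_i \setminus \{q_{i,0}\})$. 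Adding $O_i$, where $\xiinv_i$ is defined to be zero, and noting $O_i = \Pi_i^{-1}(q_{i,0})$, gives $\xiinv_i^{-1}(0) = \Pi_i^{-1}(\hat\Sigma_i) = \Siinv_i$. The statements for $\Siinv_\theta$ follow by restricting to the fibre.

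Next I would establish the local picture at each type of zero using the branched cover $\Pi_{i,\theta}: \Ainv_{i,\theta} \to \hat D_i$ of degree $m_i$, whose only branching is at $q_{i,0}$ with ramification index $m_i/m_{ik}$ (so $m_{ik}$ preimages). For $1 \leq a \leq p_i$ the point $q_{i,a}$ lies away from the branch locus, so $\Pi_{i,\theta}$ is a local diffeomorphism at each of the $m_i$ preimages, and the germ of $\xiinv_\theta$ at each of them is diffeomorphic to the germ of $\hat\xi_i$ at $q_{i,a}$. By \Cref{prop:hat_xi}\cref{it:saddle} this germ is a nondegenerate saddle, giving \cref{it:lift_saddle}.

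For \cref{it:lift_petri}, over $q_{i,0}$ the fibre has $m_{ik}$ points, and near any of them $\Pi_{i,\theta}$ is modelled by $w \mapsto w^{m_i/m_{ik}}$, as used in the proof of \Cref{lem:xiinv}. Pulling back $\hat\xi_i$, which near $q_{i,0}$ has the model $|v_i|^{2(1-m_{ik}/m_i)}$ from item \cref{it:noninv_point} of \Cref{rem:well_defined}, the computation \cref{eq:bar_w} yields the normal form
\[
  \xiinv_\theta \;=\; \tfrac{m_{ik}}{m_i}\, \bar w^{\,m_i/m_{ik}-1}
\]
times a smooth unit, which is the standard $(m_i/m_{ik})$-pronged singularity. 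The only subtlety, which I expect to be the main (minor) obstacle, is to verify that the number of preimages over $q_{i,0}$ is indeed $m_{ik}$ and that the pullback of the chosen rescaling $\Psi_i$ produces exactly the smooth prong model rather than something more singular; both follow from the definition of the rescaling and the local form of the cyclic cover, so the proof is essentially bookkeeping of the objects already introduced.
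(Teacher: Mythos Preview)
Your proposal is correct and follows essentially the same approach as the paper: the paper's proof simply says that \cref{it:lift_saddle} follows from $\xiinv$ being a lift via a local diffeomorphism together with \Cref{prop:hat_xi}\cref{it:saddle}, and that \cref{it:lift_petri} follows from \cref{eq:bar_w}. You unpack exactly these references, and in addition you make explicit the identification of the zero set (nonvanishing on the interpolating pieces via \cref{eq:xi_ji}, and $\xiinv_i^{-1}(0)=\Pi_i^{-1}(\hat\Sigma_i)$ via the lift property and \Cref{prop:hat_xi}\cref{it:zeroset_sigma_i}), which the paper relegates to the sentence preceding the lemma.
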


\begin{proof}
Item \cref{it:lift_saddle} follows from the fact that,
over $q_{i,a}$, the vector field $\xiinv$ is a lift via a local diffeomorphism
of $\hat\xi_i$, and \Cref{prop:hat_xi}\cref{it:saddle}.
Item \cref{it:lift_petri} follows from \cref{eq:bar_w}.
\end{proof}

\section{The spine}
\label{s:spine}
We now define the central object for our combinatorial analysis:
the \emph{invariant spine} $\Sinv_\theta$. This spine is very similar to the 
one defined in \cite[Section 12]{PSspine}: it differs from that one on its 
definition on the points that lie over $\Dro_{0,\theta}$. This spine is the
$1$-dimensional CW-complex formed by the union of all stable
manifolds of the singularities (critical points) of the flow
$\xiinv_\theta$.

\begin{notation}
Let $q \in \Ainv_\theta$. We denote by 
\[
\gamma_q: J_q \to \Ainv_\theta
\]
the only trajectory of $\xiinv_{\theta}$ that passes through $q$ with 
$\gamma_q(0) = q$ and $J_q = (J_q^-, J_q^+) \subset \R$ the maximal interval of 
definition. We define the points $\omega^+(q)$ and $\omega^-(q)$.

\[
\omega^\pm(q) = \lim_{t \to I_q^\pm} \gamma_q(t).
\]

\end{notation}

\begin{definition}
Let $q \in \Siinv_\theta$ we denote by $\Sinv_{q, \theta}$ the union of all the 
stable manifolds
of the singularity $q$. Since there are no sinks, we find that the set
\[
\Sinv_\theta = \bigcup_{q  \in \Siinv_\theta} \overline{\Sinv_{q, \theta}}
\]
is a finite $1$-dimensional CW-complex. We call it the {\em relative invariant 
spine}. 
\end{definition}

\section{Dynamics of the vector field}
\label{s:dynamics_vector_field}

Recall that by \Cref{prop:hat_xi} \cref{it:saddle} at the points 
$\{q_{i,a}\}_{1 \leq a \leq p_i}$ the vector field $\hat\xi_i$ has saddle 
points and that $q_{i,a} \in (a, a+1)$. On $\Droc_{i} \subset \Ainv_{i}$ 
in 
the real oriented 
blow up, we have coordinates $(\alpha_i, s_i, \beta_i)$. For each $\theta \in 
\R / 2 \pi \Z$, over 
each 
$q_{i,a}$ there lie $m_i$ points on $\Droc_{i,\theta} \subset 
\Ainv_{i,\theta}$.

 \begin{figure}
	\centering
	\large
	\resizebox{0.6\textwidth}{!}{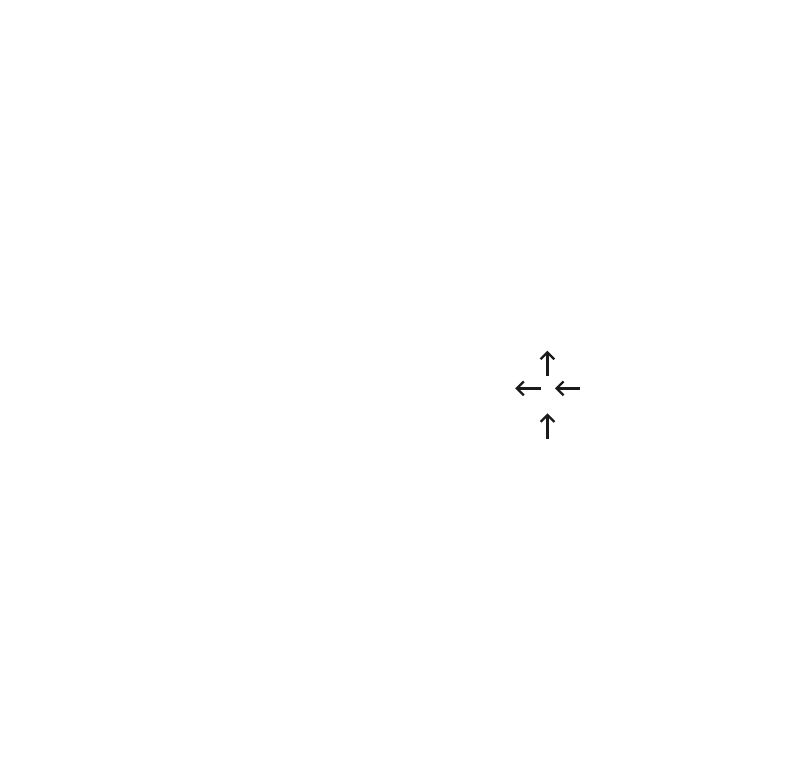}
	\caption{We see $\hat{D}_i$ with the stable manifolds (in blue) associated 
	with the polar points $q_{i,1}$ and $q_{i,2}$ and the point $q_{i,0}$. In 
	green 
	we see the corresponding unstable manifolds. The black arrows indicate the 
	orientations that prongs take to induce the correct orientation on the 
	chains of $C_{1,\theta}$.}
	\label{fig:stable_D_i}
\end{figure}

\subsubsection*{Coordinates of the prongs}

Similarly as we did in \cref{eq:Deltail} we now define a map
between boundary components of exceptional divisors in the real oriented blow 
up.

\begin{definition}\label{def:Delta_function}
Consider an invariant edge $i \to k$, that is with $i,k \in \V_{\Upsilon}$ and 
$k \in \V_i^+$. Let $q \in 
\Dro_{ik}$  with coordinates $(\alpha_i(q), \beta_{ik}(q)) \in \left(\R / 2\pi 
\Z \right)^2$ and with $\beta_{ik} \notin \{0, \pi\}$.  Take the unique 
trajectory 
of the vector field $\xiinv$ that passes through 
$q$. If $q$ lies in $\Ainv_{i,\theta}$, then so does this 
trajectory (because $\xiinv_i$ restricts to a tangent vector field 
$\xiinv_{i,\theta}$ on $\Ainv_{i,\theta}$).
If we follow this trajectory backwards, the next separating torus it passes 
through is precisely $\Dro_{ij}$ with $j \to i$. Let $\Delta(q)$ be this 
intersection point. This defines a rational map
\begin{equation}
	\Delta_{ik}:\Dro_{ik} \dashrightarrow 
	\Dro_{ij}
\end{equation}
that furthermore preserves the angle $\theta$ by construction. This map 
actually lifts the map $\hat{\Delta}_{ik}$ from \cref{eq:Deltail}.
\end{definition}

Using the formula from \Cref{lem:formula_Deltaiell} we get 
the following lemma.

\begin{lemma} \label{lem:formula_alpha_q}
	Let $q \in \Dro_{ik}$ with $i\to k$. Then, the trajectory of $\xiinv $ that 
	passes through $q$, intersects $\Dro_{ij}$ with $j\to i$ in the point 
	$\Delta(q)$ satisfying the following:
	
	If $0<\beta_{ik}<\pi$, then
	\[
	\alpha_i(\Delta(q))= \alpha_i(q) + 
	2\pi
	\left(
	\sum_{\ell \in S_{ik}}
	\left(
	\frac{m_\ell}{2 m_i}
	+ \left( \frac{m _j}{m_i} - b_i \right) \frac{n_{i\ell}}{2 n_{ij}}
	\right)
	+ \frac{\beta_{ik}(q)}{\pi}
	\left(
	\frac{m_k}{2 m_i}
	+
	\left(
	\frac{m_j}{m_i}
	-
	b_i
	\right) 
	\frac{n_{ik}}{2 n_{ij}}
	\right)
	\right) 
	\]
	
	If $\pi<\beta_{ik}<2\pi$, then
	\[
	\alpha_i(\Delta(q))= \alpha_i(q) + 
	2\pi
	\left(
	-\sum_{\ell \in S_{ik}}
	\left(
	\frac{m_\ell}{2 m_i}
	+ \left( \frac{m _j}{m_i} - b_i \right) \frac{n_{i\ell}}{2 n_{ij}}
	\right)
	+ \frac{\beta_{ik}(q)-\pi}{\pi}
	\left(
	\frac{m_k}{2 m_i}
	+
	\left(
	\frac{m_j}{m_i}
	-
	b_i
	\right) 
	\frac{n_{ik}}{2 n_{ij}}
	\right)
	\right) 
	\]
	
	If $i = 0$ we have no vertex $j$ with $j \to i$ but we can 
	define 
	$\alpha_0(\Delta(q))$ by the same formulae, substituting $n_{ij}$ by $m_0$.
\end{lemma}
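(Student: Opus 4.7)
The strategy is to combine three ingredients: the Milnor fiber equation $\arg \pi^* f = \theta$ evaluated at the two endpoints of the flow segment, the base-space projection formula of Lemma \ref{lem:formula_Deltaiell}, and the chart transition of \cref{eq:Y_sim_i} which accounts for the self-intersection $b_i$. First, I evaluate $\arg \pi^* f$ at $q \in \Dro_{ik,\theta}$ using the expansion \cref{eq:ali_hot} in the chart $U_i \cap \tU_k$; together with Lemma \ref{lem:M_ik} this yields
\[
  \theta \equiv m_i \alpha_i(q) + m_k \beta_{ik}(q) + M_{ik}\pi \pmod{2\pi}.
\]
Doing the same at $\Delta(q) \in \Dro_{ij,\theta}$ using the chart $\tU_i$, with coordinates $(\talpha_i, \tbeta_i)$, gives
\[
  \theta \equiv m_i \talpha_i(\Delta(q)) + m_j \tbeta_i(\Delta(q)) + M_{ji}\pi \pmod{2\pi},
\]
where the identity $a_{ij} = a_{ji}$ is used to identify the sign constant.

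Next, \Cref{lem:xiinv} tells us that $\xiinv$ is the horizontal lift of $\hat\xi_i$ by the Seifert fibration $\Pi_i$. Consequently, the trajectory through $q$ inside $\Ainv_{i,\theta}$ projects to the trajectory of $\hat\xi_i$ through $\hat q = \Pi_i(q)$, whose endpoint is $\hat\Delta_{ik}(\hat q) \in \partial_j \hat D_i$. The restriction of $\Pi_i$ to $\Dro_{ij}$ forgets precisely the coordinate $\talpha_i$, so I read off $\tbeta_i(\Delta(q))$ directly from Lemma \ref{lem:formula_Deltaiell} (in the convention where the angular coordinate on $\partial_j \hat D_i$ is $\tbeta_i = -\arg v_i$ at $v_i = \infty$, so the formula appears with a sign flip relative to the naive transcription). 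To express the final answer in the intrinsic variable $\alpha_i = \arg u_i$ rather than $\talpha_i = \arg \tu_i$, I apply the gluing relation $\tu_i = u_i v_i^{b_i}$; taking arguments and using $\arg v_i = -\tbeta_i$ at $\Dro_{ij}$ gives the translation $\alpha_i = \talpha_i + b_i \tbeta_i$.

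Finally, I subtract the two Milnor fiber equations, substitute $\talpha_i(\Delta(q)) = \alpha_i(\Delta(q)) - b_i \tbeta_i(\Delta(q))$, and plug in the value of $\tbeta_i(\Delta(q))$ from Lemma \ref{lem:formula_Deltaiell}. The constant term $(M_{ik} - M_{ji})\pi$ collapses using \Cref{def:Miell}: since the geodesic from $0$ to $k$ extends that from $0$ to $i$ by the single edge $ik$, one has $M_{ik} - M_{ji} = \sum_{c \in S_{ik}} m_c$, producing exactly the contribution $2\pi \sum_{\ell \in S_{ik}} \tfrac{m_\ell}{2m_i}$ in the statement. After dividing through by $m_i$, the remaining coefficients rearrange into the stated form, with the factor $\tfrac{m_j}{m_i} - b_i$ arising from the chart-transition correction. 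The two sign cases $0 < \beta_{ik} < \pi$ and $\pi < \beta_{ik} < 2\pi$ correspond directly to the two branches of Lemma \ref{lem:formula_Deltaiell}. For the special case $i = 0$, there is no parent $j$, and the role of $n_{ij}$ is played by $m_0$: the expression \cref{eq:f_0_a_0} shows that $f_0$ has a pole of order $m_0$ at $\tv_0 = 0$, and the repelling singularity provided by \Cref{prop:hat_xi} \cref{it:repller} replaces the parent-corner in the geometric picture, making the same argument go through verbatim. The main obstacle, and the only place subtle bookkeeping is needed, is distinguishing the two angular coordinates $\alpha_i$ and $\talpha_i$ on the corner $\Dro_{ij}$; this is precisely what generates the $b_i$-dependent terms.
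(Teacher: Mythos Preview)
Your approach is correct and essentially the same as the paper's. Both arguments compute the change in $\alpha_i$ along the trajectory by using the Milnor fiber constraint $\arg\pi^*f=\theta$ at the two endpoints together with \Cref{lem:formula_Deltaiell} for the $\beta$-coordinate and the chart transition \cref{eq:Y_sim_i}. The only organizational difference is that the paper stays in the single chart $U_i$ and evaluates the argument of the polynomial $g_i(v_i)=\pi^*f/u_i^{m_i}$ of \cref{eq:fi_poly} directly as $v_i\to h_i(k)$ and $v_i\to\infty$, whereas you work in the two corner charts $U_i\cap\tU_k$ and $\tU_i$ using the precomputed sign constants $M_{ik},M_{ji}$ from \Cref{lem:M_ik} and then invoke the telescoping identity $M_{ik}-M_{ji}=\sum_{c\in S_{ik}}m_c$; this identity is exactly what the paper's limit computation produces implicitly.
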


\begin{proof}
	Let $q \in \Dro_{ik}$ be a point on the boundary component of $\Ainv_i$ 
	corresponding to the intersection $D_i \cap D_k$. Let $\Delta(q) \in 
	\Dro_{ij}$ be the point on the boundary component $D_i \cap D_j$ (where $j 
	\to i$) reached by following the trajectory of $\xiinv$ from $q$.
	
	The trajectory connecting $q$ and $\Delta(a)$ is a lift of a trajectory 
	$\hat{\gamma}$ of $\hat{\xi}_i$ on 
	$\hat{D}_i$, which connects $\hat{q} = \Pi_i(q)$ to
	$\hat{\Delta}(\hat{q}) 
	= \Pi_i(\Delta(q))$. This trajectory $\hat{\gamma}$ must lie on a level set 
	of $\arg(f_i)$, where $f_i$ is the meromorphic function on $D_i$ from 
	\cref{eq:f_i_def}.
	The lifted trajectory $\gamma$ on $\Ainv_i$ lies on a fiber of the 
	horizontal 
	fibration $H_i$, defined by $\theta = \arg(\pi^*f)$.
	
	Let $g_i(v_i) = \pi^*f|_{U_i} / u_i^{m_i}$ be the holomorphic function on 
	$D_i \cap U_i$ defined in \cref{eq:fi_poly}:
	\begin{equation} \label{eq:givi}
	g_i(v_i) = a_{ij} \prod_{\ell \in \V_i^+} (v_i - 
	h_i(\ell))^{m_\ell}.
	\end{equation}
	In the $U_i$ chart we have $u_i = r_i e^{i\alpha_i}$, so the level set is 
	defined by
	\[
	\theta = \arg(\pi^*f) = m_i \alpha_i + \arg(g_i(v_i)).
	\]
	This means that as we flow along the trajectory, the change in the 
	coordinate $\alpha_i$ is determined by the change of $\arg(g_i(v_i))$:
	\[
	m_i \alpha_i(\Delta(q)) - m_i \alpha_i(q)
      = \arg(g_i(v_i(q))) - \arg(g_i(v_i(\Delta(q)))).
	\]
	The change we want to compute is $\alpha_i(\Delta(q)) - 
	\alpha_i(q)$. From the coordinate change of \cref{eq:Y_sim_i}, we have 
	$\tu_i 
	= u_i 
	v_i^{b_i}$, which implies $\talpha_i = \alpha_i + b_i \beta_i$, where 
	$\beta_i = \arg(v_i)$.
	The change in $\alpha_i$ (in the coordinates of the $U_i$ chart) from $q$ 
	to $\Delta(q)$ is
	\begin{equation}\label{eq:alpha_shift}
		\alpha_i(\Delta(q)) - \alpha_i(q) = \frac{1}{m_i} \left( 
		\arg(g_i(\hat{q})) - \arg(g_i(\hat{\Delta}(\hat{q}))) \right).
	\end{equation}
	Let $\beta_{ik}(\hat{q}) = \arg(v_i - h_i(k))$ and 
	$\beta_i(\hat{\Delta}(\hat{q})) = \arg(v_i)$.
	
	We treat the rest of the proof by cases:
	\paragraph{Case 1: $q$ is in the ``upper half'' ($0 < \beta_{ik}(\hat{q}) < 
		\pi$).}
	\begin{itemize}
		\item At $\hat{q}$: as
        $v_i \to h_i(k)$ along the trajectory converging to $\hat q$,
        \cref{eq:givi} gives
		$$\arg(g_i(v_i)) \to \arg(a_{ij}) + m_k \beta_{ik}(\hat{q}) + \pi 
		\sum_{\ell \in S_{ik}} m_\ell \pmod{2\pi}.$$
		
		\item At $\hat{\Delta}(\hat{q})$, that is, $v \to \infty$ along
        the same trajectory, the 
		constants $h_i(\ell)$ are negligible and we find
        \[
        g_i(v_i) 
		\approx
        a_{ij} v_i^{\sum m_\ell} = a_{ij} v_i^{m_i b_i - m_j},
        \]
        where the sum in the exponent runs through $\ell \in \V_i^+$,
        and we use the relation
        \[
          -b_i m_i + \sum_{\ell \in \V_i} m_i = 0,\qquad
          \V_i = \V_i^+ \amalg \{j\}.
        \]
        As a result,
		\[
          \arg(g_i(v_i))
          \to
          \arg(a_{ij}) + (m_i b_i - m_j) \beta_i(\hat{\Delta}(\hat{q}))
          \pmod{2\pi}.
        \]
	\end{itemize}
	Substituting these into the formula for $\alpha_i(\Delta(q)) 
	- \alpha_i(q)$, we find:
	\begin{align*}
		\alpha_i(\Delta(q)) - \alpha_i(q) &= \frac{1}{m_i} 
		\left(
        m_k \beta_{ik}(q)
        + \pi \sum_{\ell \in S_{ik}} m_\ell
        - (m_i b_i - m_j) \beta_i(\Delta(q))
        \right) \\
		&=
        \frac{m_k \beta_{ik}(q)}{m_i}
        + \frac{\pi \sum_{\ell \in S_{ik}} m_\ell}{m_i}
        + \left(\frac{m_j}{m_i} - b_i \right) \beta_i(\Delta(q))
	\end{align*}
	From \Cref{lem:formula_Deltaiell}, we have
	\[
	\beta_i(\Delta(q)) = \frac{n_{ik}}{n_{ij}} 
	\beta_{ik}(q) + \frac{\pi}{n_{ij}} \sum_{\ell \in S_{ik}} n_{i\ell}.
	\]
	Substituting this expression for $\beta_i$ in the expression obtained 
	before (with $\beta_{ik} = \beta_{ik}(\Delta(q))$:
	\begin{align*}
		\alpha_i(\Delta(q)) - \alpha_i(q) &= \frac{m_k 
			\beta_{ik}}{m_i} + \frac{\pi \sum_{S_{ik}} m_\ell}{m_i} + 
		\left(\frac{m_j}{m_i} - b_i \right) \left( \frac{n_{ik}}{n_{ij}} 
		\beta_{ik} + \frac{\pi}{n_{ij}} \sum_{S_{ik}} n_{i\ell} \right) \\
		&= \beta_{ik} \left( \frac{m_k}{m_i} + \left(\frac{m_j}{m_i} - 
		b_i\right) \frac{n_{ik}}{n_{ij}} \right) + \pi \left( 
		\frac{\sum_{S_{ik}} m_\ell}{m_i} + \left(\frac{m_j}{m_i} - b_i\right) 
		\frac{\sum_{S_{ik}} n_{i\ell}}{n_{ij}} \right)
	\end{align*}
	Rewriting this by factoring out $2\pi$:
	\[
	= 2\pi \left[ \sum_{\ell \in S_{ik}} \left( \frac{m_\ell}{2m_i} + 
	\left(\frac{m_j}{m_i} - b_i\right) \frac{n_{i\ell}}{2n_{ij}} \right) + 
	\frac{\beta_{ik}}{\pi} \left( \frac{m_k}{2m_i} + \left(\frac{m_j}{m_i} - 
	b_i\right) \frac{n_{ik}}{2n_{ij}} \right) \right]
	\]
	This matches the first formula in the lemma.
	
	\paragraph{Case 2: $q$ is in the ``lower half'' ($\pi < \beta_{ik}(\hat{q}) 
		< 
		2\pi$).}
	This case follows similarly, using the case $-\pi<\arg(v_{ik})<0$
    in \Cref{lem:formula_Deltaiell}.

	The case $i=0$ follows by setting $j=-1$ (the virtual {\em ancestor vertex} 
	corresponding to $\C^2$), $m_j = m_{-1} = 0$, and noting 
	that the pole order $n_{ij} = n_{0,-1}$ of $f_0$ at $\infty$ is $m_0$.
\end{proof}

\subsection{Generic angles}
\label{ss:generic_angles}

\begin{definition}\label{def:non_generic_angle}
	We say that $\theta \in \R/2\pi\Z$ is a \emph{generic angle} for $f$
	if there is no trajectory in $\Ainv_\theta$ connecting
	two singularities of $\xiinv_\theta$ whose stable set has dimension
	one. Otherwise, we say that $\theta$ is nongeneric.
	
	We denote the set of non-generic angles by $\Theta_{\mathrm{ng}} \subset 
	\R/2\pi\Z$.
	
\end{definition}

\subsection{The set of non-generic angles}
\label{ss:non_generic_angles}

In this section, we prove that the set of angles $\theta$ for which the 
topology of the Milnor fiber varies is finite. The following number is a 
classical invariant and its name is due to 
\cite{LMW_hir}.
\begin{definition}
	\label{def:hironaka_number}
	For any vertex $i$ of the resolution graph $\Gamma$, its associated
	\emph{Hironaka number} is
	\[
	h_i = \frac{m_i}{c_{0,i}}.
	\]
	If $a$ is an arrowhead vertex in $\Gamma$, then we set $h_a = +\infty$, and
	$h_a^{-1} = 0$.
\end{definition}

\begin{thm} \label{thm:finite_nongeneric}
	The set of non-generic angles $\Theta_{\mathrm{ng}} \subset \R / 2\pi \Z$ 
	is finite.
\end{thm}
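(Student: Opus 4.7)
The plan is to enumerate the possible saddle-or-prong connecting trajectories at angle $\theta$ and show that each such candidate is pinned down by a single affine equation in $\theta\in\R/2\pi\Z$ with finitely many solutions, and that the enumeration itself is finite.

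First, by \Cref{lem:critical_points_xiinv} and \Cref{prop:hat_xi}, the singularities of $\xiinv_\theta$ having $1$-dimensional stable set are the $m_i$ saddles above each $q_{i,a}$ with $1\le a\le p_i$, together with the $m_{ik}$ pronged singularities above each $q_{i,0}$; the repellers above $q_{0,-1}$ have $0$-dimensional stable set and are excluded. So $\theta$ is non-generic exactly when there is a trajectory in $\Ainv_\theta$ whose forward and backward limits are each one of these saddles or prongs. I would then observe that $\xiinv$ moves \emph{strictly downstream} through the invariant tree $\Upsilon$: on each $\Ainv_i$ the field $\hat\xi_i=-\nabla\log|f_i|$ flows from the pole of $f_i$ on $\partial_j\hat D_i$ (with $j\to i$) toward its zeros on the boundaries $\partial_k\hat D_i$ (with $i\to k$), by \Cref{prop:hat_xi}; and on each interpolating piece $\Ainv_{ji}$ the field is $\partial_t$ by \cref{eq:xi_ji}. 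Thus a candidate connecting trajectory traverses a strictly downstream path $i_0\to i_1\to\cdots\to i_N$ in $\Upsilon$ of length at most $|\V_\Upsilon|$, and the outgoing-prong direction at the starting saddle or prong takes finitely many values, leaving only finitely many combinatorial types of candidate connection.

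Next, I would fix a combinatorial type and iterate \Cref{lem:formula_alpha_q}: the coordinates at the $(s{+}1)$-st boundary-torus crossing $\Dro_{i_s i_{s+1},\theta}$ are affine in the coordinates at the $s$-th crossing, with rational coefficients determined purely by the multiplicities in $\Gamma$. Combined with the constraint $\theta\equiv m_{i_s}\alpha_{i_s}+m_{i_{s+1}}\beta_{i_s i_{s+1}}+\mathrm{const}\pmod{2\pi}$ that cuts the angle-$\theta$ slice out of $\Dro_{i_s i_{s+1}}$ (coming from the local expression $\pi^*f=u^{m_{i_s}}(v-h_{i_s}(i_{s+1}))^{m_{i_{s+1}}}\cdot(\text{unit})$), an induction shows every crossing coordinate is affine in $\theta$. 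The requirement that the trajectory match one of the finitely many stable prongs of the target singularity (whose own $\alpha$-coordinate in $\Ainv_{i_N,\theta}$ is itself affine in $\theta$, by \Cref{lem:mi_equations}) becomes a single affine congruence $a\theta\equiv b\pmod{2\pi}$, with finitely many solutions in $\R/2\pi\Z$ as long as $a\neq 0$.

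The \textbf{main obstacle} is to rule out the degenerate case $a=0$, because otherwise the putative saddle connection would persist over an interval of $\theta$. I would address this by tracking the coefficient of $\theta$ through the iteration: using the constraint above to eliminate $\alpha$-coordinates in favor of $\theta$ and $\beta$-coordinates at each step, only positive rational combinations of the multiplicities $m_\cdot$ and $n_{\cdot\cdot}$ appear, so an induction on the length $N$ of the downstream path $i_0\to\cdots\to i_N$ forces $a$ to be a nonzero rational. Summing over all finitely many combinatorial types then yields the finiteness of $\Theta_{\mathrm{ng}}$.
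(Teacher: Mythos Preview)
Your overall strategy---reduce each candidate saddle-or-prong connection to an affine congruence $a\theta\equiv b\pmod{2\pi}$ and then argue finiteness---matches the paper's. The paper, however, does not iterate the transition maps of \Cref{lem:formula_alpha_q}. Instead it uses a single global conserved quantity: the function $\arg(\pi^*x)$ is well defined on the whole ambient space, and along any trajectory in $\Ainv_{i,\theta}$ one has
\[
\arg(\pi^*x)\equiv h_i^{-1}\theta-\tfrac{1}{m_i}\arg(f_i)\pmod{2\pi},
\]
where $h_i=m_i/c_{0,i}$ is the Hironaka number. Matching the values of $\arg(\pi^*x)$ at the two ends of a connecting trajectory immediately gives the coefficient $a=h_v^{-1}-h_u^{-1}$, which is nonzero for $u\neq v$ by the strict monotonicity of Hironaka numbers along directed paths in $\Gamma$. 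This avoids any inductive bookkeeping and identifies $a$ explicitly.

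By contrast, your treatment of the main obstacle is a genuine gap. The claim that the iteration of \Cref{lem:formula_alpha_q} produces ``only positive rational combinations of the multiplicities'' is not correct: the formulas there contain the factor $\frac{m_j}{m_i}-b_i$, which is typically negative, so a naive sign-tracking induction will not show $a\neq 0$. What is actually needed is to recognise that the accumulated coefficient telescopes to $h_{i_N}^{-1}-h_{i_0}^{-1}$; without that telescoping (or an equivalent global invariant), you have no mechanism to rule out cancellation. You also do not address the case $i_0=i_N$ of a connection entirely within one piece $\Ainv_{i,\theta}$; the paper handles this separately by noting that in the explicit model the unstable prongs in $\hat D_i$ run along the real axis directly to boundary components, so no intra-piece connections occur.
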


\begin{proof}
	By \Cref{def:non_generic_angle}, an angle $\theta$ is non-generic if and 
	only if there exists a trajectory $\gamma$ of the vector field 
	$\xiinv_\theta$ inside the fiber $\Ainv_\theta$ connecting two 
	singularities $p, q \in \Siinv_\theta$ such that:
	\begin{enumerate}
		\item $\lim_{t \to +\infty} \gamma(t) = q$, where $q$ is a singularity 
		with $\dim(W^s(q)) = 1$ (a saddle or a multipronged saddle point).
		\item $\lim_{t \to -\infty} \gamma(t) = p$, where $p$ is a singularity 
		with $\dim(W^u(p)) = 1$ (a saddle or a multipronged saddle point).
	\end{enumerate}
	Since $\Siinv_\theta$ is a finite set, it suffices to prove that for any 
	fixed pair of vertices $u, v \in \V$ (not necessarily distinct) and any 
	pair of specific separatrices associated with singularities in $\Ainv_{u}$ 
	and $\Ainv_{v}$, the set of angles $\theta$ allowing a connection is finite.
	
	Recall from \cref{eq:f_i_def} that on each divisor $D_i$, the geometry of 
	the vector field $\hat{\xi}_i$ is determined by the meromorphic function:
	\[
	f_i = \frac{(\pi^*f)^{c_{0,i}}}{(\pi^*x)^{m_i}}.
	\]
	By \Cref{prop:hat_xi}~\cref{it:traj_tangent}, the trajectories of 
	$\hat{\xi}_i$ are contained in the level sets of $\arg(f_i)$. Since 
	$\xiinv_\theta$ is a lift of $\hat{\xi}_i$ (via the local diffeomorphism 
	$\Pi_{i,\theta}$ away from singular fibers), the function $\arg(f_i)$ is 
	constant along any trajectory $\gamma$ lying within the piece 
	$\Ainv_{i,\theta}$.

	The fiber $\Ainv_\theta$ is globally defined by the condition $\arg(\pi^* 
	f) = \theta$. We substitute this into the expression for the argument of 
	$f_i$:
	\[
	\arg(f_i) \equiv c_{0,i} \arg(\pi^* f) - m_i \arg(\pi^* x) \equiv c_{0,i} 
	\theta - m_i \arg(\pi^* x) \pmod{2\pi}.
	\]
	We can express $\arg(\pi^* x)$ in terms 
	of $\theta$ and a local invariants of each vertex. Let $h_i = m_i / 
	c_{0,i}$ be the Hironaka number of the divisor $D_i$ (see 
	\Cref{def:hironaka_number}).
    We have:
	\begin{equation} \label{eq:global_phase_relation}
		\arg(\pi^* x) \equiv \frac{c_{0,i}}{m_i} \theta - \frac{1}{m_i} 
		\arg(f_i) \equiv \frac{1}{h_i} \theta - \frac{1}{m_i} \arg(f_i) 
		\pmod{2\pi}.
	\end{equation}

	Let $\gamma$ be a trajectory connecting $p \in \Ainv_{u,\theta}$ to $q \in 
	\Ainv_{v,\theta}$.
	\begin{itemize}
		\item Near $q$, the trajectory coincides with a specific stable 
		separatrix of $q$. Let $K_q \in [0, 2\pi)$ be the constant value of 
		$\arg(f_v)$ along this separatrix. This constant $K_q$ is is 
		the argument of $f_v$ evaluated at the saddle point or multipronged 
		point. Thus, along the stable part of $\gamma$:
		\[
		\arg(\pi^* x) \equiv \frac{1}{h_v} \theta - \frac{K_q}{m_v} \pmod{2\pi}.
		\]
		\item Near $p$, the trajectory coincides with an unstable 
		separatrix of $p$. Let $K_p \in [0, 2\pi)$ be the constant value of 
		$\arg(f_u)$ along this separatrix. Similarly, along the unstable part 
		of $\gamma$:
		\[
		\arg(\pi^* x) \equiv \frac{1}{h_u} \theta - \frac{K_p}{m_u} \pmod{2\pi}.
		\]
	\end{itemize}

	Since $\arg(\pi^* x)$ is a global function on the ambient space (and the 
	chart gluing maps respect this coordinate), for the unstable manifold of 
	$p$ to connect to the stable manifold of $q$, their values of $\arg(\pi^* 
	x)$ must coincide. This yields the linear equation:
	\[
	\frac{1}{h_v} \theta - \frac{K_q}{m_v} \equiv \frac{1}{h_u} \theta - 
	\frac{K_p}{m_u} \pmod{2\pi}.
	\]
	Rearranging terms:
	\begin{equation} \label{eq:theta_collision}
		\theta \left( \frac{1}{h_v} - \frac{1}{h_u} \right) \equiv 
		\frac{K_q}{m_v} - \frac{K_p}{m_u} \pmod{2\pi}.
	\end{equation}
	
	We claim that if $u \neq v$ are distinct vertices involved in a saddle 
	connection, the coefficient $\Lambda_{uv} = \frac{1}{h_v} - \frac{1}{h_u}$ 
	is generically non-zero.
	Using the definition of Hironaka numbers:
	\[
	\Lambda_{uv} = \frac{c_{0,v}}{m_v} - \frac{c_{0,u}}{m_u} = \frac{m_u 
	c_{0,v} - m_v c_{0,u}}{m_u m_v}.
	\]
	In the resolution graph of a plane curve singularity, the quantity $n_{uv} 
	= m_u c_{0,v} - m_v c_{0,u}$ is non-zero for any pair of vertices connected 
	by a path in the tree (this relates to the strict monotonicity of the 
	function $m/c_0$ 
	along geodesics from the root).	If $u = v$, then the equation becomes $0 
	\equiv K_q/m_v - K_p/m_v 
	\pmod{2\pi}$. In this case we just verify that our model does not yield any 
	saddle connections.
	
	Since $\Lambda_{uv} \neq 0$, \cref{eq:theta_collision} has a finite number 
	of solutions for $\theta$ in $[0, 2\pi)$. As the number of pairs of 
	singular points and separatrices is finite, the total set 
	$\Theta_{\mathrm{ng}}$ is finite.
\end{proof}

\section{Algebraic monodromy and variation} \label{s:mon_var}

In this section, we construct a finite-dimensional chain complex
$(C_\theta, d_\theta)$ that computes the homology of the Milnor fiber. The 
relative invariant spine has a natural structure of CW-complex and the chain 
complex
that we define in this section is a sub-complex of that one which carries the 
same homological information but yields computational advantages.
The generators of the first level of this complex $C_{1,\theta}$ will be unions 
of pairs of trajectories converging to the
singularities of $\xiinv_\theta$. The generators of $C_{0,\theta}$ are the 
repellers 
of 
the vector field in $\Ainv_{0,\theta}$.  We will also define a dual complex 
$C^\vee_\theta$
generated by the unstable manifolds.

We will show that these complexes are quasi-isomorphic to the
singular chain complexes of $\Ainv_\theta$ (absolute and relative,
respectively). This construction provides a concrete algebraic
framework on which the monodromy and variation operators can be
represented as matrices.

\subsection{A chain complex}\label{ss:chain_complex} 

Fix a vertex $i \in \V_{\Upsilon}$ and a non-generic angle $\theta$.

\subsubsection*{Chains for saddle points}

Choose a saddle point $q_{i,a} \in \hat{D}_i$. Let $q_{i,a,\theta}[1]$ be the 
point in $\Pi_{i,\theta}^{-1}(q_{i,a})$ whose $\alpha$ 
coordinate in $[0, 2 \pi)$ is closest to $0$. Let 
\[
q_{i,a,\theta}[2], \ldots, q_{i,a,\theta}[m_i]
\]
be the other $m_i -1$ points labeled according to the orientation of 
$\Pi_i^{-1}(q_{i,a})$. Equivalently, the points are labaled so that
\[
\alpha_i\left(q_{i,a,\theta}[b] \right) \in \left[ \frac{2 \pi (b-1)}{m_i}, 
\frac{2 \pi 
b}{m_i}\right) + 2 \pi \Z \quad  \text{ for all } a \in \{1, \ldots, p_i\} 
\text{ and all } b \in \{1,\ldots, m_i\}.
\]
By \Cref{lem:critical_points_xiinv}, the vector field $\xiinv$ has saddle 
points at 
$q_{i,a,\theta}[b]$. We denote by 
\[
  S_{i,a,\theta}[b]
  =
  \overline{ (\omega^+)^{-1}(q_{i,a,\theta}[b])}
\]
the closure of the stable manifold of $\xiinv$ at $q_{i,a,\theta}[b]$. This 
stable manifold is naturally partioned intro three sets
 \[
  S_{i,a,\theta}[b] = P^+_{i,a,\theta}[b] \cup \{q_{i,a,\theta}[b]\} \cup 
  P^-_{i,a,\theta}[b]
 \] 
where $P^+_{i,a,\theta}[b]$  is the preimage by 
$\Pi_{i,\theta}$ of the stable manifold  $P^+_{i,a}$ of $\hat{\xi}_i$ at 
$q_{i,a}$ that lies 
on the upper half plane; and  $P^-_{i,a,\theta}[b])$ the preimage of the stable 
manifold $P^-_{i,a}$ that lies on the lower half plane (See 
\cref{fig:stable_D_i}).  We orient $S_{i,a,\theta}[b]$ 
so that $P^-_{i,a,\theta}[b]$ keeps its orientation induced by the flow of the 
vector field and $P^+_{i,a,\theta}[b]$ inverts it. See \cref{fig:stable_D_i} 
for a drawing representing these orientations.
Similarly, we denote by 
\[
  U_{i,a,\theta}[b]
  =
  \overline{ (\omega^-)^{-1}(q_{i,a,\theta}[b])}
\]
the unstable manifold of $\xiinv$ at that same point.
We orient $U_{i,a,\theta}[b]$ in such a way that near the point
$q_{i,a,\theta}[b]$, the restriction of $\Pi_{i,\theta}$ to the
preimage of the real axis in $\hat D_i$ reverses orientation. Similary as we 
have done for the stable manifolds, we observe that the unstable manifold is 
partioned as

\[
U_{i,a,\theta}[b]
=
R^+_{i,a,\theta}[b]\cup \left\{q_{i,a,\theta}[b]\right\} \cup 
R^-_{i,a,\theta}[b]
\]
where $R^+_{i,a,\theta}[b]$ is the preimage by 
$\Pi_{i,\theta}$ of the stable manifold  $R^+_{i,a}$ of $\hat{\xi}_i$ at 
$q_{i,a}$ that lies to the right of $q_{i,a}$ on the real axis and 
$R^-_{i,a,\theta}[b]$ is the other one.

If each prong (stable or unstable) is oriented by the flow of $\hat{\xi}_i$, we 
orient the stable and unstable manifolds like
\[
\begin{split}
	S_{i,a,\theta}[b] & 
	= P^-_{i,a,\theta}[b] - P^+_{i,a,\theta}[b] \\
	U_{i,a,\theta}[b]
	&=
	R^-_{i,a,\theta}[b] - R^+_{i,a,\theta}[b]
\end{split}
\]
These orientations are chosen so that the signed intersection 
$\langle S_{i,a,\theta}[b], U_{i,a,\theta}[b]\rangle=+1$ is positive.

\subsubsection*{Chains for multipronged singularities}
Next, assume that $i$ has a neighbor
$k \in \V \setminus \V_\Upsilon$.
Note that there can be at most one such $k$.
We label the $m_{ik}$ preimages of $q_{i,0}$ on $\Ainv_{\theta}$, by
\[
  q_{i,0,\theta}[c], \quad
  \text{ with } c \in \Z / m_{ik}\Z.
\] 
At these points, the vector field $\xiinv$ has 
$m_i/m_{ik}$-pronged singularities where each of the prongs lies in a 
trajectory of $\xiinv$ converging to some $q_{i,0,\theta}[c]$. We label these 
prongs in a similar way as we labeled the points $q_{i,a,\theta}[b]$. More 
concretely, let $q^-_{i,0} \in \R_{-}$ be a point in $\hat{D}_i$ very close to 
$q_{i,0}$. Using the same criterion as above, we obtain points
\[
q^-_{i,0,\theta}[1], \ldots, q^-_{i,0,\theta}[m_i]
\]
each lying in a different prong. Let 
\[
P_{i,0,\theta}[b]
\]
be the prong where the point $q^-_{i,0,\theta}[b]$ lies. Observe that 
$P_{i,0,\theta}[b]$ is a prong of the singularity $q_{i,0,\theta}[b \mod 
m_{ik}]$.
Finally we define the chains
\begin{equation} \label{eq:Si0}
  S_{i,0,\theta}[b]
  = P_{i,0,\theta}[b]
    \cup P_{i,0,\theta}[b']
    \cup 
    \{q_{i,0,\theta}[b \mod m_{ik}]\},
  \qquad
  1 \leq b \leq m_i - m_{ik}
\end{equation}
where $b'$ is the largest element of $\{ 1, 2, \ldots, m_i \}$
which is congruent to $b$ modulo $m_{ik}$.
The prongs $P_{i,0,\theta}[b]$ are oriented, since they are trajectories.
We orient the cycle \cref{eq:Si0} in such a way that
$P_{i,0,\theta}[b]$ has this orientation, and
$P_{i,0,\theta}[b']$ has the opposite orientation.

 \begin{figure}
	\centering
	\resizebox{0.6\textwidth}{!}{
\begingroup%
  \makeatletter%
  \providecommand\color[2][]{%
    \errmessage{(Inkscape) Color is used for the text in Inkscape, but the package 'color.sty' is not loaded}%
    \renewcommand\color[2][]{}%
  }%
  \providecommand\transparent[1]{%
    \errmessage{(Inkscape) Transparency is used (non-zero) for the text in Inkscape, but the package 'transparent.sty' is not loaded}%
    \renewcommand\transparent[1]{}%
  }%
  \providecommand\rotatebox[2]{#2}%
  \newcommand*\fsize{\dimexpr\f@size pt\relax}%
  \newcommand*\lineheight[1]{\fontsize{\fsize}{#1\fsize}\selectfont}%
  \ifx\svgwidth\undefined%
    \setlength{\unitlength}{362.78365008bp}%
    \ifx\svgscale\undefined%
      \relax%
    \else%
      \setlength{\unitlength}{\unitlength * \real{\svgscale}}%
    \fi%
  \else%
    \setlength{\unitlength}{\svgwidth}%
  \fi%
  \global\let\svgwidth\undefined%
  \global\let\svgscale\undefined%
  \makeatother%
  \begin{picture}(1,0.63051097)%
    \lineheight{1}%
    \setlength\tabcolsep{0pt}%
    \put(0.76484712,0.27717898){\color[rgb]{0,0,0}\makebox(0,0)[lt]{\lineheight{1.25}\smash{\begin{tabular}[t]{l}$q_{i,0}$\end{tabular}}}}%
    \put(0.74083376,0.46223505){\color[rgb]{0,0,0}\makebox(0,0)[lt]{\lineheight{1.25}\smash{\begin{tabular}[t]{l}$\hat{D}_{i}$\end{tabular}}}}%
    \put(0.39630712,0.3294134){\color[rgb]{0,0,0}\makebox(0,0)[lt]{\lineheight{1.25}\smash{\begin{tabular}[t]{l}$\Pi_{i,\theta}$\end{tabular}}}}%
    \put(0,0){\includegraphics[width=\unitlength,page=1]{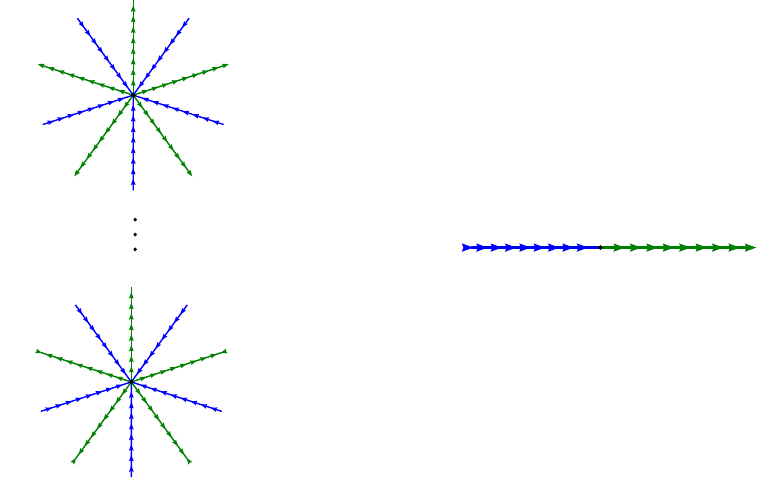}}%
    \put(-0.00033978,0.31821978){\color[rgb]{0,0,0}\makebox(0,0)[lt]{\lineheight{1.25}\smash{\begin{tabular}[t]{l}$A^{\mathrm{inv}}_{i,\theta}$\end{tabular}}}}%
    \put(0,0){\includegraphics[width=\unitlength,page=2]{multiprong.pdf}}%
  \end{picture}%
\endgroup%
}
	\caption{On the left we see the preimage on $\Ainv_{i,\theta}$ of a 
	neighborhood of the point $q_{i,0} \in \hat{D}_i$ by $\Pi_{i,\theta}$.}
	\label{fig:multiprong}
\end{figure}

For each $b$, with $1 \leq b \leq m_i - m_{ik}$, we also define a
\emph{dual cycle} to $S_{i,0,\theta}[b]$ as follows.
Let $R$ and $R'$ be the two outgoing prongs adjacent to the
incoming prong $P_{i,0,\theta}[b]$, as in \cref{fig:multiprong}.
Here, $R$ is on the right of $P_{i,0,\theta}[b]$, and $R'$ is on the left.
Orient $R$ as incoming, and $R'$ as outgoing. We then set
\[
  U_{i,0,\theta}[b]
  =
  R \cup R' \cup \{ q_{i,0,\theta}[b \mod m_{ik}] \}
\]

As in the saddle point case, these orientations are chosen so that the signed 
intersection 
$\langle S_{i,0,\theta}[b], U_{i,0,\theta}[b]\rangle=+1$ is positive.

Let $q_{0,-1} \in \hat D_0$ be the unique repeller of $\hat\xi_0$
as in \Cref{prop:hat_xi}, \cref{it:repller}. For any $\theta \in \R/2\pi\Z$, 
the point $q_{0,-1}$
has precisely $e = m_0$ preimages in $\Ainv_{0,\theta}$. Call them
$q_{0,-1,\theta}[1],\ldots,q_{0,-1,\theta}[e]$, in such a way that
for $b = 1,\ldots,e$, we have
\[
  \talpha(q_{0,-1,\theta}[b])
  \in
  \left[
  \frac{(b-1)2\pi}{e},
  \frac{b2\pi}{e}
  \right).
\]
For $1 \leq b \leq e$, we set
\[
\begin{split}
  S_{0,0,\theta}[b]
  &= \{ q_{0,-1,\theta}[b] \} = (\omega^+)^{-1}(q_{0,-1,\theta}[b]),\\
  U_{0,0,\theta}[b]
  &= (\omega^-)^{-1}(q_{0,-1,\theta}[b]).\\
\end{split}
\]
Note that $S_{0,0,\theta}[b]$ is just a point and $U_{0,0,\theta}[b]$ is a 
topological disk since $q_{0,-1,\theta}[b]$ is a fountain of $\xiinv_{\theta}$.

\begin{definition}
Let $i \in \V_\Upsilon$. If $1\leq a \leq p_i$, set
\[
\begin{split}
  C_{1,i,a,\theta}
  &=
  \Z
  \gen{S_{i,a,\theta}[b]}{1\leq b \leq m_i}, \\
  C^\vee_{1,i,a,\theta}
  &=
  \Z
  \gen{U_{i,a,\theta}[b]}{1\leq b \leq m_i},
\end{split}
\]
as well as
\[
\begin{split}
  C_{1,i,0,\theta}
  &=
  \Z
  \gen{S_{i,0,\theta}[b]}{1\leq b \leq m_i-m_{ik}} \\
  C_{1,i,0,\theta}
  &=
  \Z
  \gen{U_{i,0,\theta}[b]}{1\leq b \leq m_i-m_{ik}}
\end{split}
\]
in case $i$ has a  neighbor $k \in \V_\Gamma \setminus \V_\Upsilon$, otherwise 
we define
$C^{\phantom{\vee}}_{1,i,0,\theta}$ and $C^\vee_{1,i,0,\theta}$
to be trivial $\Z$-modules. We define
\[
\begin{aligned}
  &C_{1,i,\theta}
  =
  \bigoplus_{a=0}^{p_i}
  C_{1,i,a,\theta},
  &\qquad
  &C_{1,\theta}
  =
  \bigoplus_{i \in \V_{\Upsilon}}
  C_{1,i,\theta} \\
  &C^\vee_{1,i,\theta}
  =
  \bigoplus_{a=0}^{p_i}
  C^\vee_{1,i,a,\theta},
  &\qquad
  &C^\vee_{1,\theta}
  =
  \bigoplus_{i \in \V_{\Upsilon}}
  C^\vee_{1,i,\theta}.
\end{aligned}
\]

\begin{block}\label{blc:index_sets}
Let 
\[
I_{1} \subset \V_{\Upsilon} \times \Z \times \Z
\]
be defined by 
$(i,a,b) \in I_1$ if and only if $S_{1,i,a,\theta}[b] \in C_{1, \theta}$. By 
\Cref{lem:nice_total_order}, the set $\V_{\Upsilon}$ is totally ordered so the 
set 
$\V_{\Upsilon} \times \Z_{\leq 0} \times \Z_{\leq 1}$ has a lexicographic order 
which induces a lexicographic order on $I_{1}$. Similarly, we define 
\[
I_{1,i} = 
\{i\} \times \Z \times \Z \cap I_1 \subset I_1
\]
and
\[
I_{1,i,a} = 
\{i\} \times \{a\} \times \Z \cap I_1 \subset I_{1,i}
\]
\end{block}

We define also
\[
\begin{split}
  C_{0,\theta}
  &=
  \Z\left\langle S_{0,0,\theta}[1],\ldots,S_{0,0,\theta}[e] \right\rangle\\
  C^\vee_{2,\theta},
  &=
  \Z\left\langle U_{0,0,\theta}[1],\ldots,U_{0,0,\theta}[e] \right\rangle.
\end{split}
\]
Finally, if $j \neq 0,1$, then let $C_{j,\theta}$ be the trivial
$\Z$-module. Similarly, if $j \neq 1,2$, then let $C^\vee_{j,\theta}$
be the trivial $\Z$-module. We then have graded $\Z$-modules
\[
  C_\theta = \bigoplus_{j\in \Z} C_{j,\theta},\qquad
  C^\vee_\theta = \bigoplus_{j\in \Z} C^\vee_{j,\theta}.
\]
These graded modules are naturally seen as submodules of
the singular chain complex of $\Ainv_\theta$, and the
sinugular chain complex of $\Ainv_\theta$ relative to
$\partial\Ainv_\theta$, respectively, that is
\begin{equation} \label{eq:C_inclusions}
  C_\theta \subset C(\Ainv_\theta;\Z), \qquad
  C^\vee_\theta \subset C(\Ainv_\theta,\partial\Ainv_\theta;\Z).
\end{equation}
These are, in fact, subcomplexes, that is, they are invariant under
the differential.
Denote the restrictions by
\[
  d_{j,\theta}:C_{j,\theta} \to C_{j-1,\theta},\qquad
  d^\vee_{j,\theta}:C^\vee_{j,\theta} \to C^\vee_{j-1,\theta}.
\]
There is a natural pairing between the two complexes
$C_\theta$ and $C^\vee_\theta$, by requiring the bases described above
to be dual bases. This means that we consider as an identification
the isomorphism
\[
  C^\vee_{j,\theta} \to \Hom(C_{2-j,\theta}, \Z)
\]
whose image of $U_{i,a,\theta}[b]$ is dual to $S_{i,a,\theta}[b]$, and so on.
\end{definition}

\begin{block}
The inclusions \cref{eq:C_inclusions} are quasiisomorphisms. As a result,
the homology groups of $C_\theta$ and $C^\vee_\theta$ are naturally
isomorphic to the absolute and relative homology groups of $\Ainv_\theta$.
Furthermore,
the restriction of the pairing 
\[
\langle \cdot, \cdot \rangle: C_\theta \times C^\vee_\theta \to \Z 
\] 
between 
$C_\theta$ and $C^\vee_\theta$
to cycles coincides with the natural pairing between cycles
in $Z_j(\Ainv;\Z)$ and relative cycles in $Z_{2-j}(\Ainv,\partial\Ainv;\Z)$.
\end{block}

\subsection{The action on the chain complex}
\label{ss:action_complex}

Having built the chain complex $(C_\theta, d_\theta)$ in
the previous section, we are now in a position to
compute the algebraic monodromy. The geometric monodromy
$G_\theta: \Ainv_\theta \to \Ainv_\theta$ (defined in \Cref{def:Ainv})
is a diffeomorphism that permutes the trajectories of the
flow $\xiinv$ in the periodic pieces and interpolates between them in the 
connecting annuli.

In this section, we compute the matrix $B_\theta$ of this
action with respect to the basis of chains
$\{S_{i,a,\theta}[b]\}$ defined previously. This matrix $B_\theta$
is the algebraic representation of the monodromy operator
$(G_\theta)_*$ acting on the defined CW-complex of the fiber.

\subsubsection*{Action on $C_{0,\theta}$}

We define  $B_{0,\theta}: C_{0, \theta } \to C_{0, \theta}$ as the permutation 
matrix
\[
\left(
\begin{matrix}
	0 & 0 & 0 & \cdots & 1 \\
	1 & 0 & 0 & \cdots & 0 \\
	0 & 1 & 0 & \cdots & 0 \\
	\vdots & \vdots  &  \vdots  & \ddots & \vdots \\
	0 & 0 & 0 & \cdots & 0 \\	
\end{matrix}
\right)
\]
of size $e = m_0$.
\subsubsection*{Action on $C_{1, \theta}$}

We define the matrix $B_{1,\theta}$ acting on the chain complex as

\[
B_{1, \theta}
=
\left(
\langle
G_\theta(S_{i,a,\theta}[b]) ,  U_{i',a',\theta}[b]
\rangle
\right)_{(i,a,b),(i',a',b') \in I_1}.
\]
Here the intersection number is taken viewing $ U_{i',a',\theta}[b]$ as a 
relative 
cycle in $\Ainv_{\theta}$ relative to $\partial \Ainv_{\theta}$, and 
$G_\theta(S_{i,a,\theta}[b])$ as a cycle in $\Ainv_{\theta}$ relative to the 
set of points $q_{0, -1,\theta}[1], \ldots, q_{0, -1,\theta}[e]$.
We also recall that $I_1$ (\cref{blc:index_sets}) is a totally ordered index 
set.
Equivalently, $B_{1, \theta}$ is the linear operator $B_{1,\theta}: C_{1, 
\theta} \to C_{1,\theta}$ defined by
\begin{equation}\label{eq:def_B1}
B_{1,\theta}(S_{i,a,\theta} [b]) = \sum_{(i',a',b') 
	\in I_1} \langle 
G_{\theta}(S_{i,a,\theta} [b]) ,
U_{i',a',\theta}[b'] \rangle S_{i',a',\theta}[b'].
\end{equation}
We also give a name to some blocks of the matrix:
\[
B_{1,i,\theta} 
= 
\left(
\langle
G_\theta(S_{i,a,\theta}[b]) , U_{i',a',\theta}[b']
\rangle
\right)_{(i,a,b),(i',a',b') \in I_{1,i}}
\]
and	
\[
B_{1,i,a,\theta} 
= 
\left(
\langle
G_\theta(S_{i,a,\theta}[b]) , U_{i',a',\theta}[b']
\rangle
\right)_{(i,a,b),(i',a',b') \in I_{1,i,a}}
\]

Next, we show the form that some of these blocks take.
\begin{lemma}
The matrix $B_{1,\theta}$ is block upper triangular,
with block diagonal entries the submatrices $B_{1,i,a,\theta}$.

If $a \neq 0$, the matrix $B_{1,i,a,\theta}$ is the $m_i \times m_i$ 
permutation matrix
\[
B_{1,i,a,\theta}
=
\left(
\begin{matrix}
	0 & 0 & 0 & \cdots & 1 \\
	1 & 0 & 0 & \cdots & 0 \\
	0 & 1 & 0 & \cdots & 0 \\
	\vdots & \vdots  &  \vdots  & \ddots & \vdots \\
	0 & 0 & 0 & \cdots & 0 \\	
\end{matrix}
\right)
\]
which is the companion matrix to the polynomial $t^{m_i}-1$.

If $a=0$,  the matrix $B_{1,i,0,\theta}$ is the $(m_{i} - m_{ik}) \times (m_{i} 
- m_{ik})$ matrix of the form
\[
B_{1,i,0,\theta}
=
\left(
\begin{matrix}
	-c_{m_i - m_{ik}-1} & -c_{m_i - m_{ik}-2} & \cdots &-c_1& 
	-c_0 \\
	1 & 0 & \cdots &0 & 0 \\
	0 & 1 & \cdots &0& 0 \\
	\vdots &  \vdots  & \ddots&\vdots & \vdots \\
	0 &  0 & \cdots & 1 & 0 \\	
\end{matrix}
\right)
\]
which is the inverse transpose of the companion matrix to the polynomial
\[
\frac{t^{m_i} -1}{t^{m_{ik}} -1} 
= 
\sum_{\ell =0}^{m_i - m_{ik}} c_\ell t^\ell 
=
t^{m_i - m_{ik}} + t^{m_i - 2m_{ik}} + t^{m_i - 3m_{ik}} + \cdots + 1.
\]
Furthermore, the matrix $B_{1,i,\theta}$ is the diagonal block matrix
\[
B_{1,i,\theta} = \bigoplus_{a} B_{1,i,a,\theta}
\]
\end{lemma}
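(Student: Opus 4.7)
The plan is to compute each matrix entry $\langle G_\theta(S_{i,a,\theta}[b]),U_{i',a',\theta}[b']\rangle$ by first reducing to a local analysis on each piece of $\Ainv_\theta$ and then invoking genericity of $\theta$ to rule out interior intersections. By construction (\Cref{def:Ainv}), the global monodromy $G_\theta$ restricts on each periodic piece $\Ainv_{i,\theta}$ to the Galois deck transformation $G_{i,\theta}$ of the branched cover $\Pi_{i,\theta}$. Since $\xiinv_i$ is the horizontal lift of $\hat\xi_i$ under $\Pi_i$, the map $G_{i,\theta}$ preserves $\xiinv$ and hence its stable and unstable manifolds inside $\Ainv_i$. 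The indexing convention on the $\alpha_i$-coordinate then forces $G_{i,\theta}$ to shift the label $b$ by one, so that inside $\Ainv_i$ the image $G_\theta(S_{i,a,\theta}[b])$ coincides with $S_{i,a,\theta}[b+1 \bmod m_i]$, and likewise $G_\theta(P_{i,0,\theta}[b])$ coincides with $P_{i,0,\theta}[b+1\bmod m_i]$.

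For the block upper triangularity of $B_{1,\theta}$, I would observe that the support of $S_{i,a,\theta}[b]$ is confined to $\Ainv_i$ together with the ancestor pieces along the geodesic $0=i_0\to\cdots\to i_s=i$, because the stable manifolds of $\hat\xi_i=-\nabla\log|f_i|$ emanate from the poles of $f_i$ located at $\partial_j\hat D_i$ with $j\to i$. Dually, $U_{i',a',\theta}[b']$ lives in $\Ainv_{i'}$ and descendant pieces of $i'$. Therefore the two chains have disjoint support whenever $i'$ is not on the path $0\to\cdots\to i$, which in the nice total order of \Cref{lem:total_order_vertices} forces the matrix to be upper triangular with respect to the lex order on $I_1$. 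For the block diagonality $B_{1,i,\theta}=\bigoplus_a B_{1,i,a,\theta}$, I would invoke \Cref{thm:finite_nongeneric}: genericity of $\theta$ prevents saddle connections, so any nonzero intersection of $G_\theta(S)$ with $U$ can only be concentrated at common singularities of $\xiinv_\theta$. Inside $\Ainv_i$, $G_\theta(S_{i,a,\theta}[b])$ passes only through $q_{i,a,\theta}[b+1\bmod m_i]$, while $U_{i,a',\theta}[b']$ passes only through $q_{i,a',\theta}[b']$, so the entry vanishes whenever $a\neq a'$.

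The diagonal blocks are then determined by the explicit cyclic action on saddles and prongs. For $a\neq 0$, the cyclic permutation $b\mapsto b+1\bmod m_i$ on the $m_i$ saddles over $q_{i,a}$ produces directly the companion matrix of $t^{m_i}-1$. For $a=0$, the same shift on prongs gives $G_\theta(S_{i,0,\theta}[b])=P_{i,0,\theta}[b+1\bmod m_i]-P_{i,0,\theta}[b'+1\bmod m_i]$ with $b'\equiv b\pmod{m_{ik}}$ maximal in $\{1,\ldots,m_i\}$; rewriting this in the basis $\{S_{i,0,\theta}[c]\}_{c=1}^{m_i-m_{ik}}$ via the relations $S_{i,0,\theta}[c]=P_{i,0,\theta}[c]-P_{i,0,\theta}[c']$ yields the inverse-transpose of the companion matrix of $(t^{m_i}-1)/(t^{m_{ik}}-1)$. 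The main obstacle is this last rewriting step: when $b+1$ exceeds $m_i-m_{ik}$, the shifted prongs lie outside the chosen basis range and must be re-expressed as signed sums of several $S$-generators. This bookkeeping, together with the orientation conventions on prongs fixed in \Cref{fig:stable_D_i} and \Cref{fig:multiprong}, can be checked on a small example (for instance $m_i=6$, $m_{ik}=2$) and then extended to the general case by a direct induction on the coset representatives modulo $m_{ik}$.
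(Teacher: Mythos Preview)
Your treatment of the block upper-triangular structure and of the $a\neq 0$ diagonal blocks follows the paper's approach essentially verbatim. For the block diagonality of $B_{1,i,\theta}$ you appeal to genericity of $\theta$ to exclude saddle connections; the paper argues this more geometrically, using that $G_{i,\theta}$ sends trajectories to trajectories inside $\Ainv_{i,\theta}$ and that the supports of $S$ and $U$ overlap only in $\Ainv_i$, so an intersection forces a common singularity over the same $q_{i,a}\in\hat D_i$. Both justifications are valid; in fact within a single $\hat D_i$ no genericity is needed, since the stable prongs $P^\pm_{i,a}$ lie strictly in the open half-planes while the unstable prongs $R^\pm_{i,a'}$ lie on the real axis.

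The substantive difference is in the $a=0$ block. You compute $G_\theta(S_{i,0,\theta}[b])$ directly as a difference of shifted prongs and then rewrite it in the basis $\{S_{i,0,\theta}[c]\}_{c=1}^{m_i-m_{ik}}$, handling the wrap-around cases by hand. This is correct (your test case $m_i=6$, $m_{ik}=2$ does give the right matrix), but the ``induction on coset representatives'' you sketch is not a clean induction and amounts to a somewhat tedious case analysis. The paper takes a different and shorter route: it works on the dual side, identifying $C^\vee_{1,i,0,\theta}$ with the kernel of the $\Z[t]$-module surjection
\[
\Z[t]/(t^{m_i}-1)\longrightarrow \Z[t]/(t^{m_{ik}}-1)
\]
that sends each outgoing prong to its multipronged singularity. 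This kernel is cyclic, isomorphic to $\Z[t]/(Q(t))$ with $Q(t)=(t^{m_i}-1)/(t^{m_{ik}}-1)$, and in the cyclic basis the action of $t$ is immediately the companion matrix of $Q$; taking the inverse transpose yields $B_{1,i,0,\theta}$. This module-theoretic argument explains structurally why $Q(t)$ appears and avoids all prong bookkeeping, whereas your direct computation recovers the same matrix but leaves the appearance of $Q$ looking like a coincidence.
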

 
\begin{proof}
For the first statement on the block upper triangular structure of
$B_{1,\theta}$, first consider two distinct vertices $i,k$ in $\Gamma$.
The chain $S_{i,a,\theta}[b]$, as well as its image by $G_\theta$
is contained in the pieces
\[
  \Ainv_{\ell_0}, \Ainv_{\ell_0,\ell_1}, \Ainv_{\ell_1}
  \ldots,
  \Ainv_{\ell_{s-1},\ell_s}
  \Ainv_{\ell_s}
\]
where $0 = \ell_0 \to \ell_1 \to \ldots \to \ell_s = i$ is the geodesic
in $\Gamma$ between $0$ and $i$. Similarly, both prongs in $U_{k,c,\theta}[d]$
are contained in unions of pieces corresponding to directed geodesics
in $\Gamma$ starting at $k$. As a result, these cycles are disjoint,
unless there is a directed geodesic starting at $i$ and ending at $k$.
Thus, the matrix $B_{1,\theta}$ is block upper triangular with diagonal
blocks the matrices $B_{1,i,\theta}$. We will see below that these are
block diagonal matrices with diagonal blocks
$B_{1,i,a,\theta}$, proving the first statement.

	This proof proceeds by analyzing the three statements of the lemma. The 
	central element is the action of the monodromy map $G_\theta: \Ainv_\theta 
	\to \Ainv_\theta$ on the chain complex $C_{1,\theta}$.
	
	\paragraph{Proof of $B_{1,i,\theta} = \bigoplus_{a} B_{1,i,a,\theta}$}
	
	The intersection pairing $\langle \cdot, \cdot \rangle$ between a chain in 
	$\Ainv_i$ and a chain in $\Ainv_{i'}$ is zero unless $i=i'$. This implies 
	$B_{1,\theta}$ is block-diagonal with respect to the vertex index $i$, 
	i.e., $B_{1,\theta} = \bigoplus_i B_{1,i,\theta}$.
	
	Furthermore, within $\Ainv_i$, the map $G_\theta|_{\Ainv_i} = G_{i,\theta}$ 
	is the generator of the Galois group of the cover $\Pi_{i,\theta}: 
	\Ainv_{i,\theta} \to \widehat D_i$. The map $G_\theta$ permutes the points 
	of the finite  fiber over any point 
	$\hat{q} \in \widehat D_i$.
	The chains $S_{i,a,\theta}[b]$ are stable manifolds of the singularities 
	$q_{i,a,\theta}[b]$. The chain $U_{i,a',\theta}[b']$ is the unstable 
	manifold of a singularity $q_{i,a',\theta}[b']$ in the fiber over $q_{i,a'} 
	\in \widehat D_i$.  Thus, 
	$G_\theta(S_{i,a,\theta}[b])$ is a chain that contains a point in the fiber 
	over 
	$q_{i,a}$ and similarly for the (relative) chain $U_{i,a',\theta}[b']$. 
	Since $G_\theta$ sends trajectories to trajectories and singularities to 
	singularities within the pieces $\Ainv_{i,\theta}$, and since the sequences 
	of vertices by which 
	$U_{i,a',\theta}[b']$ passes and by which $S_{i,a,\theta}[b]$ passes only 
	concide at $i$, we deduce that the 
	intersection pairing
	\[
	\langle G_\theta(S_{i,a,\theta}[b]) , U_{i,a',\theta}[b'] \rangle
	\]
	can only be non-zero if their corresponding singularity lies over the same 
	singularity of $\hat{\xi}$ in $\hat{D}_i$, which requires $a = a'$.
	Therefore, the matrix $B_{1,i,\theta}$ is itself block-diagonal with 
	respect to the index $a$, proving the third statement of the lemma.
	
	\paragraph{Proof for $a \neq 0$ (Saddle points)}
	We now compute the block $B_{1,i,a,\theta}$ for $a \neq 0$. The relevant 
	elements for this block are
	$\{S_{i,a,\theta}[b]\}_{b=1}^{m_i}$, and their corresponding duals are 
	$\{U_{i,a,\theta}[b]\}_{b=1}^{m_i}$.
	The singularities $\{q_{i,a,\theta}[b]\}_{b=1}^{m_i}$ are the $m_i$ 
	saddle points which are preimages of the saddle point $q_{i,a} \in 
	\hat{D}_i$ of $\hat{\xi}$.
	They are explicitly labeled by their $\alpha$  coordinates such that:
	\[
	\alpha_i\left(q_{i,a,\theta}[b] \right) \in \left[ \frac{2 \pi (b-1)}{m_i}, 
	\frac{2 \pi b}{m_i}\right) + 2 \pi \Z.
	\]
	The monodromy map $G_\theta = G_{i,\theta}$ is the generator of this 
	$m_i$-sheeted cyclic cover. Its action on the fiber corresponds to adding 
	$\frac{2\pi}{m_i}$ to the $\alpha_i$-coordinate.
	This action permutes cyclically the labeled points.

	In particular, the action on the points is $G_\theta(q_{i,a,\theta}[b]) = 
	q_{i,a,\theta}[b+1]$ (for $b<m_i$) and $G_\theta(q_{i,a,\theta}[m_i]) = 
	q_{i,a,\theta}[1]$.
	By definition, $G_\theta$ maps the stable manifold of a 
	point to the stable manifold of its image within the piece 
	$\Ainv_{i,\theta}$:
	\begin{itemize}
		\item $G_\theta(S_{i,a,\theta}[b]) \cap \Ainv_{i,\theta} = 
		S_{i,a,\theta}[b+1]  \cap \Ainv_{i,\theta}$, for $b = 1, 
		\ldots, m_i-1$.
		\item $G_\theta(S_{i,a,\theta}[m_i]) \cap \Ainv_{i,\theta} = 
		S_{i,a,\theta}[1] \cap \Ainv_{i,\theta}$.
	\end{itemize}
	The entries of the matrix $B_{1,i,a,\theta}$ are $(B)_{b',b} = \langle 
	G_\theta(S_{i,a,\theta}[b]), U_{i,a,\theta}[b'] \rangle$.
	\begin{itemize}
		\item For a column $b < m_i$: The entry is $\langle 
		S_{i,a,\theta}[b+1], U_{i,a,\theta}[b'] \rangle = \delta_{b+1, b'}$. 
		This gives a $1$ in row $b+1$ and $0$s elsewhere.
		\item For the column $b = m_i$: The entry is $\langle 
		S_{i,a,\theta}[1], U_{i,a,\theta}[b'] \rangle = \delta_{1, b'}$. This 
		gives a $1$ in row $1$ and $0$s elsewhere.
	\end{itemize}
	The resulting matrix is:
	\[
	B_{1,i,a,\theta} =
	\begin{pmatrix}
		0 & 0 & 0 & \cdots & 0 & 1 \\
		1 & 0 & 0 & \cdots & 0 & 0 \\
		0 & 1 & 0 & \cdots & 0 & 0 \\
		\vdots & \vdots & \vdots & \ddots & \vdots & \vdots \\
		0 & 0 & 0 & \cdots & 1 & 0 \\	
	\end{pmatrix}
	\]
	This is the companion matrix for the polynomial $p(t) = t^{m_i} - 1$. For 
	this polynomial, $n=m_i$, $c_0 = -1$, and $c_1 = \ldots = c_{m_i-1} = 0$. 
	The last column of the companion matrix is $(-c_0, -c_1, \ldots, 
	-c_{m_i-1})^T = (1, 0, \ldots, 0)^T$. This matches our computed matrix (the 
	$1$ is in the first row, last column).
	
\paragraph{Proof for $a = 0$ (Multipronged singularities)}
For $a=0$, we consider the block $B_{1,i,0,\theta}$. Let $t$ be the variable 
representing the action of the monodromy $G_\theta$. We first establish the 
structure of $C_{1,i,0,\theta}$ as a $\mathbb{Z}[t]$-module.

The map $\Pi_{i,\theta}: \Ainv_{i,\theta} \to \widehat{D}_i$ is an 
$m_i$-sheeted cyclic cover. Let $\hat{q}$ be a point in $\hat{D}_i$ on the 
trajectory of $\hat{\xi}_i$ escaping from$q_{i.0}$  (on the positive axis) . 
Let 
$C_{\text{fiber}}(\hat{q})$ denote the free 
$\mathbb{Z}$-module on the fiber $\Pi_{i,\theta}^{-1}(\hat{q})$.
\begin{itemize}
	\item The fiber over $\hat{q}$ has $m_i$ 
	points. The module of unstable prongs $C_{\text{uprongs}} := 
	C_{\text{fiber}}(\hat{q})$ is 
	isomorphic to $\mathbb{Z}[t]/(t^{m_i}-1)$ as a $\mathbb{Z}[t]$-module.
	\item For the singular point $q_{i,0}$ itself, the fiber has $m_{ik}$ 
	points. The module $C_{\text{points}} := C_{\text{fiber}}(q_{i,0})$ is 
	isomorphic to $\mathbb{Z}[t]/(t^{m_{ik}}-1)$.
\end{itemize}
There is a natural  map $\phi: C_{\text{uprongs}} \to 
C_{\text{points}}$ 
that sends each prong to the singular point it converges 
to. This map is a surjective $\mathbb{Z}[t]$-module homomorphism. The module 
$C_{1,i,0,\theta}$ is, by its construction from the prongs in \cref{eq:Si0}, 
precisely the kernel of this map, $\ker(\phi)$.

We have a short exact sequence of $\mathbb{Z}[t]$-modules:
\[
0 \to C_{1,i,0,\theta}^\vee \to C_{\text{uprongs}} \xrightarrow{\phi} 
C_{\text{points}} \to 0
\]
Using the module isomorphisms, this becomes:
\[
0 \to \ker(\phi) \to \mathbb{Z}[t]/(t^{m_i}-1) \xrightarrow{\phi} 
\mathbb{Z}[t]/(t^{m_{ik}}-1) \to 0
\]
The kernel of the canonical projection $\phi$ is the ideal generated by 
$(t^{m_{ik}}-1)$ inside the ring $\mathbb{Z}[t]/(t^{m_i}-1)$. We thus have
\[
C_{1,i,0,\theta}^\vee \cong \ker(\phi) = \frac{(t^{m_{ik}}-1)}{(t^{m_i}-1)} 
\subset 
\frac{\mathbb{Z}[t]}{(t^{m_i}-1)}.
\]
This kernel is a cyclic module generated by the element $g = (t^{m_{ik}}-1)$. 
By the isomorphism theorem, this module is isomorphic to $\mathbb{Z}[t] / 
\text{Ann}(g)$, where $\text{Ann}(g)$ is the annihilator ideal of $g$ in 
$\mathbb{Z}[t]$. A polynomial $P(t) \in \mathbb{Z}[t]$ is in $\text{Ann}(g)$ if 
$P(t) \cdot g = 
0$ in the module $\mathbb{Z}[t]/(t^{m_i}-1)$. This means
\[
P(t) \cdot (t^{m_{ik}}-1) \text{ is a multiple of } (t^{m_i}-1) \text{ in } 
\mathbb{Z}[t].
\]
Let $Q(t) = (t^{m_i}-1) / (t^{m_{ik}}-1)$. Since $m_{ik}$ divides $m_i$, $Q(t)$ 
is a polynomial in $\mathbb{Z}[t]$. The condition becomes:
\[
P(t) \cdot (t^{m_{ik}}-1) = R(t) \cdot Q(t) \cdot (t^{m_{ik}}-1) \quad 
\text{for some } R(t) \in \mathbb{Z}[t].
\]
This implies $P(t)$ must be a multiple of $Q(t)$. The annihilator ideal is 
therefore $\text{Ann}(g) = (Q(t))$. We have thus shown:
\[
C_{1,i,0,\theta}^\vee \cong \mathbb{Z}[t] / (Q(t)), \quad \text{where } Q(t) = 
\frac{t^{m_i} - 1}{t^{m_{ik}} - 1} = \sum_{\ell=0}^{N} c_\ell t^\ell,
\]
and $N = m_i - m_{ik} = \deg(Q(t))$.

The basis $\{U_{i,0,\theta}[b]\}_{b=1}^{N}$ is, by construction (cf. 
\cref{eq:Si0}), chosen to be a standard $\mathbb{Z}$-basis for this cyclic 
module, corresponding to $\{U[1], t U[1], t^2 U[1], \ldots, t^{N-1} U[1]\}$.
Let $U[b] = U_{i,0,\theta}[b] \leftrightarrow t^{b-1} U[1]$. The action of 
$G_\theta$ (multiplication by $t$) is:
\begin{itemize}
	\item For $b \in \{1, \ldots, N-1\}$:
	$G_\theta(U[b]) = G_\theta(t^{b-1} U[1]) = t^b U[1] = U[b+1]$.
	
	\item For $b = N$:
	$G_\theta(U[N]) = G_\theta(t^{N-1} U[1]) = t^N U[1]$.
\end{itemize}
Since $U[1]$ is a generator and $Q(t) \cdot U[1] = 0$, we have:
\[
(t^N + c_{N-1}t^{N-1} + \ldots + c_1 t + c_0) U[1] = 0
\]
Solving for $t^N U[1]$, we get:
\[
t^N U[1] = -c_{N-1} t^{N-1} U[1] - \ldots - c_1 t U[1] - c_0 U[1]
\]
Substituting back $U[b] = t^{b-1} U[1]$:
\[
G_\theta(U[N]) = -c_{N-1} U[N] - \ldots - c_1 U[2] - c_0 U[1]
\]
Now we write the matrix $B_{1,i,0,\theta}^\vee$ whose entries are $(B)_{b',b} = 
\langle G_\theta(S[b]), U[b'] \rangle$.
\begin{itemize}
	\item For a column $b < N$: $G_\theta(U[b]) = U[b+1]$. The column has a $1$ 
	in row $b+1$ and $0$s elsewhere, since $\langle S[b+1], U[b'] \rangle = 
	\delta_{b+1, b'}$.
	\item For the column $b = N$: $G_\theta(U[N]) = \sum_{\ell=0}^{N-1} -c_\ell 
	U[\ell+1]$. The column vector is $(-c_0, -c_1, \ldots, -c_{N-1})^T$.
\end{itemize}
This gives the matrix:
\[
B_{1,i,0,\theta}^\vee =
\left(
\begin{matrix}
	0 & 0 & 0 & \cdots & 0 & -c_0 \\
	1 & 0 & 0 & \cdots & 0 & -c_1 \\
	0 & 1 & 0 & \cdots & 0 & -c_2 \\
	\vdots & \vdots & \vdots & \ddots & \vdots & \vdots \\
	0 & 0 & 0 & \cdots & 1 & -c_{N-1} \\	
\end{matrix}
\right)
\]
where $N = m_i - m_{ik}$. This is precisely the companion matrix for the 
polynomial $Q(t) = \sum_{\ell=0}^{N} c_\ell t^\ell$.
Finally, $B_{1,i,0,\theta}^\vee = \left(B_{1,i,0,\theta}^{-1}\right)^T$.
\end{proof}
 
 \begin{notation} \label{not:B_matrix}
 	We denote by $B_\theta = B_{0,\theta} \oplus B_{1, \theta}$ the induced 
 	operator on $C_\theta = C_{0, \theta} \oplus C_{1, \theta}$ by the 
 	previously defined two operators.
 \end{notation}

\subsection{Action on homology}

 \begin{lemma}\label{lem:action_on_homology}
	The operator $B_{\theta}$ acts on the complex $(C_{\theta}, d_\theta)$. And 
	the induced map on homology 
		coincides with the map $(G_\theta)_\ast$ induced on  homology by 
		the geometric monodromy $G_\theta$.
\end{lemma}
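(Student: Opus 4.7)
The plan is to derive both assertions from two ingredients: the algebraic duality between the chain complexes $C_\theta$ and $C^\vee_\theta$ spelled out in \Cref{ss:chain_complex}, and the compatibility of the chain-level pairing with the topological intersection pairing on cycles.

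First I would handle the degree-zero piece. The $m_0$ repellers $q_{0,-1,\theta}[b]$ are labelled by their $\tilde\alpha_0$-coordinates, and $G_\theta|_{\Ainv_{0,\theta}}$ is the Galois generator of the cyclic cover $\Pi_{0,\theta}\colon \Ainv_{0,\theta}\to\widehat D_0$, which shifts $\tilde\alpha_0$ by $2\pi/m_0$. Hence $G_\theta$ cyclically permutes these repellers by $b\mapsto b+1\bmod m_0$, so $(G_\theta)_*$ preserves $C_{0,\theta}$ and restricts there literally to $B_{0,\theta}$. In particular, for any $S\in C_{1,\theta}$, $B_{0,\theta}(d_{1,\theta}S)=G_\theta(d_{1,\theta}S)=d(G_\theta S)$ as singular $0$-chains.

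The chain-map property $d_{1,\theta}B_{1,\theta}=B_{0,\theta}d_{1,\theta}$ then reduces to verifying $d_{1,\theta}B_{1,\theta}(S)=d(G_\theta S)$. Both sides lie in $C_{0,\theta}$, which is detected under the pairing by the dual basis $\{U_{0,0,\theta}[b']\}\subset C^\vee_{2,\theta}$. Using the chain-level adjunction $\langle d_{1,\theta}x,\,y\rangle=\langle x,\,d^\vee_{2,\theta}y\rangle$, which holds by construction of $C^\vee_\theta$ as the $\Z$-dual of $C_\theta$, together with the defining identity $\langle B_{1,\theta}(S),\,U_\gamma\rangle=\langle G_\theta S,\,U_\gamma\rangle$ extended linearly to $d^\vee_{2,\theta}U_{0,0,\theta}[b']\in C^\vee_{1,\theta}$, I would obtain
\[
\langle d_{1,\theta}B_{1,\theta}(S),\,U_{0,0,\theta}[b']\rangle
=\langle B_{1,\theta}(S),\,d^\vee_{2,\theta}U_{0,0,\theta}[b']\rangle
=\langle G_\theta S,\,d^\vee_{2,\theta}U_{0,0,\theta}[b']\rangle
=\langle d(G_\theta S),\,U_{0,0,\theta}[b']\rangle,
\]
where the final equality invokes the same Stokes-type adjunction now applied to the singular chain $G_\theta S$. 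Because this holds for every $b'$, equality in $C_{0,\theta}$ follows.

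The homology statement then reduces to showing $[B_{1,\theta}(x)]=[G_\theta x]$ in $H_1(\Ainv_\theta;\Z)$ for every cycle $x\in Z_1(C_\theta)$. By construction and linearity, $B_{1,\theta}(x)-G_\theta x$ pairs trivially with every generator $U_\beta$ of $C^\vee_{1,\theta}$. By the last block of \Cref{ss:chain_complex}, this algebraic pairing restricted to cycles coincides with the Poincaré-Lefschetz intersection pairing $H_1(\Ainv_\theta;\Z)\otimes H_1(\Ainv_\theta,\partial\Ainv_\theta;\Z)\to\Z$, which is perfect over $\Z$ since $\Ainv_\theta$ is a compact oriented surface with non-empty boundary. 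As the classes $\{[U_\beta]\}$ generate $H_1(C^\vee_\theta)\cong H_1(\Ainv_\theta,\partial\Ainv_\theta;\Z)$, perfectness forces $[B_{1,\theta}(x)-G_\theta x]=0$. The main obstacle I anticipate is making rigorous the chain-level adjunction $\langle d(G_\theta S),\,U\rangle=\langle G_\theta S,\,d^\vee U\rangle$ when $G_\theta S$ is a singular chain lying outside the subcomplex $C_{1,\theta}$ and is not itself a cycle: the identity amounts to counting, on one side, the endpoints of $G_\theta S$ contained in the $2$-disk $U_{0,0,\theta}[b']$, and on the other, the signed transverse crossings of $G_\theta S$ with the boundary of that disk, and it is precisely the point at which the algebraic and topological descriptions of $C^\vee_\theta$ must be reconciled.
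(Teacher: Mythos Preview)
Your approach is correct but follows a different route from the paper's. The paper's proof is very brief: after noting (as you do) that $B_{0,\theta}$ agrees with the action of $G_\theta$ on the set $S_0$ of repellers, it invokes the identification
\[
  C_{1,\theta}\;\cong\;\tilde H_1(\Ainv_\theta/S_0;\Z)\;\cong\;H_1(\Ainv_\theta,S_0;\Z),
\]
and then simply observes that the defining formula \cref{eq:def_B1} is exactly the matrix of $(G_\theta)_*$ on this relative group with respect to the given basis. The chain-map property is absorbed into the naturality of the connecting map of the pair $(\Ainv_\theta,S_0)$ and is not checked separately.

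Your argument instead establishes the chain-map property by an explicit adjunction computation and then deduces the homology statement from perfectness of the Poincar\'e--Lefschetz pairing on the surface $\Ainv_\theta$. This is more laborious but also more transparent: it isolates exactly where the algebraic and topological pairings must agree (the Stokes-type step you flag), whereas the paper's identification $C_{1,\theta}\cong H_1(\Ainv_\theta,S_0;\Z)$ silently packages both the chain-map verification and the duality argument into a single isomorphism it does not spell out. The obstacle you raise is real but resolvable along the lines you describe: the endpoints of $G_\theta S$ are repellers lying in the interior of the disks $U_{0,0,\theta}[b']$ and away from their boundary curves, so the signed count of boundary crossings equals the signed count of interior endpoints, which is the required identity.
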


\begin{proof}
	First observe that the action of $B_{0, \theta}$ and $G_\theta$ on the 
	points that generate $C_{0,\theta}$ coincides. Denote the set of these
points by $S_0$ in this proof.
On one hand this implies 
	that $(G_\theta)_\ast$ acts on the relative homology group 
	$H_1(\Ainv_{\theta}, S_0; \Z)$. On the other hand, we have the 
	sequence of isomorphisms
$H_1(\Ainv_{\theta}, S_0; \Z) \simeq
\tilde{H}_1(\Ainv_{\theta} / S_0; \Z)  \simeq C_{1,\theta}$. 
	So we are 
	left to prove that the actions of $B_{1, \theta}$ on $C_{1,\theta}$ and 
	$(G_\theta)_\ast$ on  $H_1(\Ainv_{\theta}, S_0); \Z)$ coincide via 
	the above isomorphisms. But this 
	follows from the construction of $B_{1,\theta}$ in \cref{eq:def_B1} and the 
	definition of $(G_\theta)_\ast$.
\end{proof}

This lemma confirms that our algebraic construction $B_\theta$
faithfully represents the topological monodromy. We now turn
to the other principal invariant, the variation operator.

\section{Description of the variation operator}
\label{s:variation_operator}

In the previous section, we analyzed the action of the monodromy
$G_\theta$. In this section, we describe the second key topological
invariant: the variation operator. The classical variation operator,
$\text{Var}: H_1(\Ainv_\theta, \partial \Ainv_\theta; \Z) \to H_1(\Ainv_\theta; 
\Z)$,
measures the difference between a relative cycle and its image under
the monodromy.

We will define a chain-level operator $V: C^\vee_{1,\theta} \to C_{1,\theta}$
that acts between our dual complex and the primary complex.
As shown in \Cref{lem:var_hom}, this operator $V$ is the 
realization on the chain complex of the classical variation operator.

 \subsection{$V$ on the chain complex}
 
 We define the operator $V: C^\vee_{1,\theta} \to C_{1,\theta}$ by the formula
 
\begin{equation}\label{eq:variation_V}
V_{1,\theta}(U_{i,a,\theta} [b]) = \sum_{(i',a',b') 
	\in I_1} \langle 
G_{\theta}(U_{i,a,\theta} [b]) - U_{i,a,\theta} [b] ,
U_{i',a',\theta}[b'] \rangle \, S_{i',a',\theta}[b'].
\end{equation}
Note that since $G_{\theta}$ equals the identity map on $\partial 
\Ainv_{\theta}$, then  $G_{\theta}(U_{i,a,\theta} [b]) - U_{i,a,\theta} [b]$ is 
an absolute cycle and the intersection product of the formula above is well 
defined.

 \subsection{$\textrm{Var}$ on homology}

By a similar argument as the one in \Cref{lem:action_on_homology}, we get

\begin{lemma}\label{lem:var_hom}
	The linear operator $V$ defines a map $H_1(\Ainv_{\theta}, \partial 
	\Ainv_{\theta}; \Z) \to H_1(\Ainv_{\theta}; \Z)$ which coincides with the 
	classical variation operator.
\end{lemma}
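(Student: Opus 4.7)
The plan is to run an argument parallel to that of \Cref{lem:action_on_homology}, with the key extra input that $G_\theta$ restricts to the identity on $\partial\Ainv_\theta$. First I would verify well-definedness at the chain level: for any relative cycle $U \in C^\vee_{1,\theta}$, the chain $G_\theta(U) - U$ is an \emph{absolute} cycle in $\Ainv_\theta$, since $\partial(G_\theta(U)-U) = G_\theta(\partial U) - \partial U = 0$ using $G_\theta|_{\partial\Ainv_\theta} = \id$. Thus the formula \eqref{eq:variation_V} makes sense, and one must check that $V$ actually lands in the subcomplex $C_{1,\theta} \subset C(\Ainv_\theta;\Z)$, i.e.\ that $[G_\theta(U) - U]$ is represented in the chosen basis via the intersection pairing with the $U_{i',a',\theta}[b']$.

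Next I would recall from the block around \eqref{eq:C_inclusions} that the inclusions
\[
  C_\theta \hookrightarrow C(\Ainv_\theta;\Z),\qquad
  C^\vee_\theta \hookrightarrow C(\Ainv_\theta,\partial\Ainv_\theta;\Z)
\]
are quasi-isomorphisms, and that the pairing $\langle \cdot,\cdot\rangle:C_\theta\times C^\vee_\theta\to\Z$ on cycles agrees with the topological intersection pairing between $Z_1(\Ainv_\theta;\Z)$ and $Z_1(\Ainv_\theta,\partial\Ainv_\theta;\Z)$. Since the bases $\{S_{i,a,\theta}[b]\}$ and $\{U_{i,a,\theta}[b]\}$ are set up to be dual under this pairing, the expansion of any absolute cycle $\gamma \in C_{1,\theta}$ with respect to $\{S_{i',a',\theta}[b']\}$ is precisely
\[
  \gamma \;=\; \sum_{(i',a',b')\in I_1}\langle \gamma,\,U_{i',a',\theta}[b']\rangle\; S_{i',a',\theta}[b'],
\]
modulo boundaries. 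Applied to $\gamma = G_\theta(U_{i,a,\theta}[b]) - U_{i,a,\theta}[b]$, this identifies \eqref{eq:variation_V} with the homology class $[G_\theta(U)-U]$ read through the isomorphism $H_1(C_\theta)\cong H_1(\Ainv_\theta;\Z)$.

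Finally I would conclude: on homology the classical variation is defined by $\Var([U]) = [G_\theta(U)-U]$, which is exactly the map just constructed. Chain homotopy independence is automatic because any two representatives of $[U] \in H_1(\Ainv_\theta,\partial\Ainv_\theta;\Z)$ differ by a boundary $\partial\sigma + \tau$ with $\tau\subset\partial\Ainv_\theta$; applying $G_\theta-\id$ kills $\tau$ (since $G_\theta$ fixes the boundary) and sends $\partial\sigma$ to $\partial(G_\theta(\sigma)-\sigma)$, a boundary in $\Ainv_\theta$, so the class $[G_\theta(U)-U]\in H_1(\Ainv_\theta;\Z)$ is independent of the representative.

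The main obstacle I foresee is of a bookkeeping nature rather than a conceptual one: verifying that our identification of $C^\vee_{1,\theta}$ as a subcomplex of $C(\Ainv_\theta,\partial\Ainv_\theta;\Z)$ is respected by the chain map $G_\theta - \id$, i.e.\ that $G_\theta(U_{i,a,\theta}[b]) - U_{i,a,\theta}[b]$, which a priori only lies in the ambient singular chain complex, has class representable in the finite-dimensional subcomplex $C_{1,\theta}$. This is handled precisely by the quasi-isomorphism from \eqref{eq:C_inclusions} together with the duality of the bases, so the remaining work is merely to check that the intersection-pairing expansion formula above is compatible with this quasi-isomorphism, which mirrors the analogous check carried out for $B_\theta$ in \Cref{lem:action_on_homology}.
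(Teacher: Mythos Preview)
Your proposal is correct and follows essentially the same approach as the paper: both verify that $G_\theta(U)-U$ is an absolute cycle using $G_\theta|_{\partial\Ainv_\theta}=\id$, check that relative boundaries go to absolute boundaries, and then invoke the quasi-isomorphisms of \eqref{eq:C_inclusions} to identify the induced map with the classical variation $[U]\mapsto[G_\theta(U)-U]$. Your additional remarks on the intersection-pairing expansion are a welcome clarification of a step the paper leaves implicit.
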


\begin{proof}
	We first show $V$ induces a map on homology $V_*: H_1(C^\vee_\theta) \to 
	H_1(C_\theta)$.
	\begin{enumerate}
		\item \textbf{$V$ maps relative cycles to absolute cycles:}
		Let $U \in Z^\vee_{1,\theta}$ (so $\partial U = 0$). The chain $V(U) = 
		G_\theta(U) - U$ is an absolute cycle because $G_\theta$ acts as the 
		identity on $\partial \Ainv_\theta$, which contains the boundary of 
		$U$. So,
		\[
		\partial(V(U)) = \partial(G_\theta(U) - U) = \partial(G_\theta(U)) - 
		\partial U = G_\theta(\partial U) - \partial U = G_\theta(0) - 0 = 0.
		\]
		So $V(U) \in Z_{1,\theta}$.
		
		\item \textbf{$V$ maps relative boundaries to absolute boundaries:}
		Let $U \in B^\vee_{1,\theta}$, so $U = \partial^\vee W$ for some $W \in 
		C^\vee_{2,\theta}$. Since $G_\theta$ commutes with $\partial$:
		\[
		V(U) = G_\theta(\partial^\vee W) - \partial^\vee W = 
		\partial(G_\theta(W)) - \partial W = \partial(G_\theta(W) - W).
		\]
		Since $G_\theta(W) - W$ is an absolute 2-chain, $V(U)$ is an absolute 
		1-boundary.
	\end{enumerate}
	Therefore, $V$ descends to a well-defined homomorphism $V_*: 
	H_1(C^\vee_\theta) \to H_1(C_\theta)$ given by $V_*([U]) = [G_\theta(U) - 
	U]$.
	
	Next, we show this map coincides with the classical variation operator, 
	$\text{Var}_{\text{classic}}$. The inclusions 
	$C^\vee_\theta \hookrightarrow C(\Ainv_\theta, \partial \Ainv_\theta)$ and 
	$C_\theta \hookrightarrow C(\Ainv_\theta)$ are quasi-isomorphisms, inducing 
	isomorphisms on homology:
	\begin{align*}
		\Psi &: H_1(C^\vee_\theta) \xrightarrow{\cong} H_1(\Ainv_\theta, 
		\partial \Ainv_\theta) \\
		\Phi &: H_1(C_\theta) \xrightarrow{\cong} H_1(\Ainv_\theta)
	\end{align*}
	The operator $V$ from the lemma is the map $\text{Var}_{\text{lemma}} = 
	\Phi \circ V_* \circ \Psi^{-1}$. The classical operator is 
	$\text{Var}_{\text{classic}}([c]) = [G_\theta(c) - c]$.
	
	Let $[c] \in H_1(\Ainv_\theta, \partial \Ainv_\theta)$ be an arbitrary 
	class. Let $[U] = \Psi^{-1}([c])$ be its corresponding class in 
	$H_1(C^\vee_\theta)$. The chain $U$ is thus a representative cycle for 
	$[c]$.
	\begin{itemize}
		\item Applying $\text{Var}_{\text{lemma}}$:
		\[
		\text{Var}_{\text{lemma}}([c]) = (\Phi \circ V_* \circ \Psi^{-1})([c]) 
		= \Phi(V_*([U])) = \Phi([G_\theta(U) - U]) = [G_\theta(U) - U].
		\]
		\item Applying $\text{Var}_{\text{classic}}$:
		\[
		\text{Var}_{\text{classic}}([c]) = [G_\theta(U) - U].
		\]
	\end{itemize}
	Since both maps yield the same absolute homology class $[G_\theta(U) - U] 
	\in H_1(\Ainv_\theta)$, they are identical.
\end{proof}

\section{Gyrographs}
\label{s:gyrographs}

\begin{block}
	In this section, we provide a method which can simplify the direct
	calculation of the matrices for monodromy and variation map.
	If $\theta$ is a generic angle, then the invariant spine $\Sinv_\theta$
	naturally has the structure of a conformal ribbon graph, being embedded
	in a surface, where near the vertices, we have a natural conformal
	structure. This data, together with the Hironaka numbers, allow us to
	calculate by hand the variation map for plane curve singularities.
\end{block}

\subsection{A general construction}
\label{ss:general_construction}
\begin{block}
	We denote by $S$ a graph, with vertex set $\V$
	and edge set $\E$. We will assume that every edge has two distinct
	adjacent vertices, i.e. there are no loops, but we do allow for multiple
	edges joining the same pair of vertices.
	Denote by $\E_v$ the set of edges adjacent to a vertex $v \in \V$.
	A \emph{ribbon graph} structure on $S$ allows us to define safe
	walks in $S$. Recall that a safe walk is a path on the ribbon graph that 
	turns 
	right at each vertex, see \cite{AC_tat}.
	To allow for flexibility, we do not always make an explicit distinction
	between a graph, and its topological realization.
	Throughout, we will consider a special subset of vertices $\Rep \in \V$
	in $S$. We will assume  that any edge in $S$ does not have
	both its vertices in $\Rep$, a condition that holds for
	the invariant spine, with all singularities seen as vertices.
	In general, we can subdivide any such edge in two by adding a vertex
	not in $\Rep$, to avoid any problems.
\end{block}

\begin{definition}
	A \emph{conformal ribbon graph} is a graph $S$, along with
	angles $a_v(e,f) \in \R / 2\pi\Z$ associated to any pair of edges
	$e,f$ adjacent to any vertex $v$ in $S$, satisfying
	\[
	a_v(e,g) = a_v(e,f) + a_v(f,g),\qquad
	a_v(e,f) \neq 0 \quad \mathrm{if}\quad e\neq f
	\]
	for any edge $g$ also adjacent to $v$. At any vertex $v\in \V$ we have an 
	$\R/2\pi\Z$-torsor $\Theta_v$,
	given as 
	\[
	\coprod_{e\in \E_v} \{e\} \times (\R/2\pi\Z) / \sim, \qquad
	(e,\alpha) \sim (f,\alpha + a_v(e,f)),\; e,f\in\E_v.
	\]
	For any $\Rep \subset \V$, we set
	\[
	\Theta_\Rep = \coprod_{r\in \Rep} \Theta_r
	\]
\end{definition}

\begin{definition}
	Let $S$ be a conformal ribbon graph, and let $\Rep \subset \V$ be a
	set of vertices. The real oriented blow-up
	$\Blro_\Rep(S)$
	of $S$ at $\Rep$ is
	the topological space given as the disjoint union of the sets
	$\Theta_\Rep$ and $\V \setminus \Rep$, with topology
	gluing the end points of any edge to the corresponding
	angle in $\Theta_\Rep$, if the endpoint is in $\Rep$.
\end{definition}

\begin{definition}
	Let $S$ be a conformal ribbon graph. A \emph{gyration} $\delta$ of angle
	$\lambda \in \R$ around $\Rep$ in $S$ is a continuous path
	in $\Blro_\Rep(S)$ satisfying the following conditions:
	\begin{itemize}
		\item
		The starting and finishing points of $\delta$ are in $\Theta_\Rep$.
		\item
		The combined length of all the segments of $\delta$ which are
		contained in $\Theta_\Rep$ add up to $\lambda$, and the path
		is oriented according to he orientation of $\Theta_\Rep$ along
		these segments.
		\item
		Any segment of $\delta$ outside $\Theta_\Rep$ is a safe walk.
	\end{itemize}
	The \emph{induced path} in $S$ is the projection of this path
	to $S$, seen as a path in a graph.
\end{definition}

\begin{block}
Given a gyration $\delta$,
we say that time passes with speed 1 along segments of $\Theta_\Rep$, 
whereas
safe paths in its complement are taken instantaneously. Thus, time
passes from $0$ to $\lambda$ along a gyration of angle $\lambda$.
Assume that the gyration $\delta$ passes through the angle of an edge in $S$
adjacent to $r \in \Rep$ at time $t$, such that just before $t$,
the path is on $\Theta_\Rep$.
There are then two possibilities. After time $t$, the path proceeds
along a safe path in $\Blro_\Rep(S)$, until it comes back to $\Theta_\Rep$,
in which case we say that the gyration \emph{takes a turn} along
the corresponding edge,
or the path proceeds along $\Theta_\Rep$, in which case we say that
the path \emph{ignores the turn}.
Thus, a gyration can be constructed by specifying
a starting or a finishing point in $\Blro_\Rep(S)$,
along with instructions on which turns to take, and which ones
to ignore, along the way.
\end{block}

\begin{block} \label{block:gyrograph}
	Let $S$ be a conformal ribbon graph, with $\Rep \subset \V$.
	Let $P$ be the graph obtained from $\Blro_\R(S)$ by deleting edges
	contained in $\Theta_\Rep$.
	Consider data of the following form:
	\begin{enumerate}
		\item \label{it:varsigma_weights}
		Nonnegative real weights $\varsigma_C, \varsigma^\vee_C \in \R_{\geq 0}$
		associated with
		every componet $C$ of $P$.
		For a vertex or edge $v,e$ in $C$,
		we set $\varsigma_v = \varsigma_e = \varsigma_C$.
		and $\varsigma^\vee_v = \varsigma^\vee_e = \varsigma^\vee_C$.
		\item \label{it:map_k}
		An $S^1$-invariant bijective map $k_\Theta:\Theta_\Rep \to \Theta_\Rep$,
		inducing a permutation $k_\Rep:\Rep \to \Rep$.
		\item
		An automorphism $k_P$ of the ribbon graph $P$.
	\end{enumerate}
	For any edge $e$ in $S$ adjacent to $r\in \Rep$ and $v \in \V\setminus 
	\Rep$,
	we define gyrations as follows.
\begin{itemize}

\item
The \emph{absolute} gyration $\delta_e$
has length $\varsigma_e$, starts at
$k_P(e)$ in $\Theta_r$, and takes a turn along an edge $f$ at time
$t$ if and only if $t > \varsigma_e - \varsigma_f$.

\item
The \emph{relative} gyration $\delta^\vee_e$ has length $\varsigma$, starts at
$e$, and takes a turn along an edge $f$ at time $t$ if and only if
$t > \varsigma^\vee_f - \varsigma^\vee_e$.

\end{itemize}
\end{block}

\begin{definition} \label{def:gyrograph}
	A conformal ribbon graph $S$ and $\Rep \subset \V$ a set of vertices,
	along with weights $\varsigma_i$ on 
	its vertices, constant along connected components of $S \setminus \Rep$,
	is a \emph{gyrograph} if, for every edge $e$ in $S$,
	adjacent to $r\in \Rep$ and $v \in \V\setminus \Rep$,
	the following hold for any edge $e$ in $S$, adjacent to $r \in \Rep$
	\begin{itemize}
		
		\item
		{\bf The absolute gyrograph property:}
		the absolute gyration $\delta_e$ ends at $k_\Theta(e)$.
		
		\item
		{\bf The relative gyrograph property:}
		Let $s$ be an interior point of some edge in $S$,
		and consider a safe walk starting at $s$, stopping as soon as we
		reach $\Rep$, having final edge $f$.
		Then, the relative gyration $\delta_f^\vee$ ends at some edge of $S$,
		and a safe walk which starts following this edge, finds
		$k_P(s)$ before returning to $\Rep$.
	\end{itemize}
\end{definition}

\begin{definition} \label{block:gyrograph_abs_rel}
	Given a gyrograph $S$, with the notation introduced in 
	\cref{block:gyrograph},
	we can define a continuous map
\begin{equation}
k_S:S\to S
\end{equation}
up to homotopy, as follows.
	For any $r \in \Rep$, we set $k_S(r) = k_\Rep(r)$,
	and if $v \in \V\setminus \Rep$, then $k_S(v) = k_P(v)$.
	For any $e$ in $S$, the path $k_S(e)$ in $S$ is the concatenation
	of $k_P(e)$ and the path induced by the gyration $\delta_e$. The absolute 
	condition
	in \Cref{def:gyrograph} guarantees that this way, $k_S$ is a
	continuous map.
We call this element of $[S,S]$ the \emph{monodromy spoor} of $S$.
	
	Now, let $e$ be any edge in $S$, with some orientation. Denote by $e^\vee$
	an oriented dual segment to the edge $e$ in a thickening $F$ of $S$.
	That is, the segment $e^\vee$ intersects $S$ in a single point $s\in e$,
	an interior point of $e$, this intersection is transverse, and the
	intersection number of $e$ and $e^\vee$ at $s$ is $+1$.
	Let $w_+$ be the concatenation of the safe walk from $s$ along the
	chosen orientation, the relative gyration defined by the last
	edge of this walk, and then a safe walk which finds $k_P(s)$,
	and define $w_-$ similarly, starting the safe walk against the chosen
	orientation.
	We then have an oriented cycle $w_+ - w_-$, which we denote by $v_e$,
and call the \emph{variation spoor} of $e$, as in \cite[Section 6]{AC_lag}.
\end{definition}

\subsection{The invariant spine as a gyrograph}
\label{ss:invariant_gyrograph}

\begin{block}
	In this subsection, we put a gyrograph structure on the invariant spine
	$S = \Sinv_\theta$, for a generic angle $\theta$. Recall 
	\Cref{def:hironaka_number} for the definition of the Hironaka number.
\end{block}

\begin{definition} \label{def:gyrodata}
	Let $\theta$ be a generic angle for the $f$ as in 
	\Cref{def:non_generic_angle}.
	With $S = \Sinv_\theta$ the invariant spine of $f$, denote by
	$\V = \V_\theta$ the set of singularities of $\xiro_\theta$,
	and $\E$ the set of trajectories, forming a graph.
	Let $\Rep = \Rep_\theta \subset \V_\theta$ be the set of vertices
	corresponding to the repellers
$q_{0,-1,\theta}[1],\ldots,q_{0,-1,\theta}[m_0]$.
	As this graph is embedded in $\Ainv_\theta$, which
	has a conformal structure near the vertices, it
	inherits a conformal structure.
	In particular, the set $\Theta_\Rep$ is naturally seen as a subset
	of the coordinate space $\{(\tilde \alpha_0, \tilde \beta_0)\}$, with
	$\R/2\pi\Z$ acting on the $\tilde \beta_0$ coordinate,
	\[
	\Theta_\Rep
	=
	\set{(\tilde\alpha_0, \tilde\beta_0) \in \left(\R/2\pi\Z\right)^2}
	{m_0 \alpha_0 = \theta}.
	\]
	For any vertex $v = q_{i,a,\theta}[b] \in \V_\theta \setminus \Rep_\theta$, 
	set
	\[
	\varsigma_v = h_0^{-1} - h_i^{-1},\qquad
	\varsigma^\vee_v = h_i^{-1}.
	\]
	We define a map $k_\Theta:\Theta_\Rep \to \Theta_\Rep$ by
	\[
	\left(\tilde \alpha_0,\tilde \beta_0\right)
	\mapsto
	\left(\tilde \alpha_0+\frac{1}{m_0},\tilde \beta_0\right).
	\]
	Using the notation in \Cref{s:mon_var},
	we define a graph automorphism $k_P$ by
	\[
	q_{i,0,\theta}[c]
	\mapsto
	q_{i,a,\theta}[c+1],\qquad 
	c \in \Z / m_{ik}\Z
	\]
	and
	\[
	q_{i,a,\theta}[b]
	\mapsto
	q_{i,a,\theta}[b+1],\qquad 
	b \in \Z / m_i\Z
	\]
	for $a>0$. For prongs, and any $a$, we map
	\[
	P_{i,0,\theta}[b]
	\mapsto
	P_{i,0,\theta}[b+1],\qquad
	P^\pm_{i,a,\theta}[b]
	\mapsto
	P^\pm_{i,a,\theta}[b+1],\qquad
	b \in \Z / m_i\Z.
	\]
\end{definition}
\begin{thm}\label{thm:gyro}
The invariant spine $\Sinv_\theta$, with the data in \Cref{def:gyrodata}
is a gyrograph. Furthermore, the monodromy spoor of $\Sinv_\theta$
coincides with the homotopy class of the geometric monodromy, and
if $e\in \E_\theta$ is an edge, and $e^\vee$ is a dual edge to it, then
the variation spoor $v_e$ of $e$ coincides with the variation map
of $e^\vee$.
\end{thm}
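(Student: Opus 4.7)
My plan is to verify the gyrograph axioms by translating each rule into a dynamical assertion about $\xiinv_\theta$ on $\Ainv_\theta$ and then to reduce that assertion to the two key identities already in hand: the global phase relation \cref{eq:global_phase_relation} and the step-by-step angular shift formula in \Cref{lem:formula_alpha_q}. The conformal structure on $\Sinv_\theta$ is the one induced from the complex charts on each $\hat D_i$ via the Seifert covers $\Pi_{i,\theta}$, so that safe walks in the ribbon graph correspond to paths that follow separatrices around each singularity in the same cyclic order as the prongs of $\xiinv_\theta$. By construction, $k_P$ equals the Galois generator on each piece $\Ainv_{i,\theta}$ and $k_\Theta$ equals the shift $\tilde\alpha_0 \mapsto \tilde\alpha_0 + 1/m_0$ induced by $G_{0,\theta}$. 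Consequently, the identification of $k_S$ with $G_\theta$ is already in place outside the interpolating annuli $\Ainv_{ji}$, and only the gluing on those annuli remains to be checked.

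For the absolute gyrograph property, I would fix an edge $e$ of $\Sinv_\theta$ joining a repeller $r\in\Rep$ to a singularity lying over some $q_{i,a}\in\hat D_i$, and let $0 = \ell_0 \to \ell_1 \to \cdots \to \ell_s = i$ be the geodesic in $\Upsilon$. Applying \Cref{lem:formula_alpha_q} once on each interpolating piece $\Ainv_{\ell_r\ell_{r+1}}$, I expect the cumulative $\tilde\alpha_0$ shift between the endpoint of $e$ in $\Theta_r$ and the endpoint of $G_\theta(e)$ to telescope, using the edge relation $b_i m_i = m_j + \sum_{\ell\in\V^+_i} m_\ell$ and the definition $n_{ij} = m_j c_{0,i} - m_i c_{0,j}$, to precisely $2\pi(h_0^{-1} - h_i^{-1}) = 2\pi\varsigma_e$, exactly the length of $\delta_e$. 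The moment at which the trajectory of another edge $f$, whose geodesic agrees with that of $e$ up to some intermediate vertex, separates from that of $e$ on $\Theta_r$ occurs, by the same telescoping applied along the shared portion of the geodesic, at position $\varsigma_e - \varsigma_f$, matching exactly the ``take the turn iff $t > \varsigma_e - \varsigma_f$'' rule.

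For the relative gyrograph property I would dualize the argument. A point $s$ in the interior of an edge $e$ together with a dual segment $e^\vee$ through $s$ corresponds, via the formula \cref{eq:variation_V}, to an unstable-manifold basis element whose image under $V$ is an alternating sum of trajectories emerging from $s$ along the positive and negative sides of $e$. Applying \Cref{lem:formula_alpha_q} in the forward direction from the singularity out to $\Theta_r$ gives a cumulative shift of $h_i^{-1} = \varsigma^\vee_v$, and the condition that two forward trajectories starting on opposite sides of $s$ separate from each other, at the boundary of $\Theta_r$, at time $\varsigma^\vee_f - \varsigma^\vee_e$ matches the branching rule of $\delta^\vee_f$. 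Combined with \Cref{lem:var_hom}, which already identifies $V$ with the classical variation map, this simultaneously yields the relative gyrograph property and the identification of the variation spoor of $e$ with $\Var(e^\vee)$. The identification of the monodromy spoor with $G_\theta$ then follows from \Cref{lem:action_on_homology} together with the definition of $k_S$: on vertices it coincides with $G_\theta$ by the construction of $k_P$ and $k_\Theta$, and on edges it is the concatenation of $k_P(e)$ with the absolute gyration that we have just shown to track $G_\theta$ up to homotopy.

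The principal obstacle is the telescoping computation underlying the second step. The formula of \Cref{lem:formula_alpha_q} has distinct upper- and lower-half-plane cases and mixes the multiplicities $m_\ell$, the intersection numbers $n_{i\ell}$, the Euler numbers $b_i$, and the maximal cycle coefficients $c_{0,i}$ in a way that does not obviously simplify; the job is to rewrite the sum of the per-edge shifts along a long geodesic as a single clean difference of two Hironaka numbers. Once this identity is established, the three conclusions of the theorem fall out as described.
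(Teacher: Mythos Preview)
Your plan is correct and is essentially the paper's own strategy: both arguments identify the deviation of $G_\theta$ from $k_P$ as living entirely in the interpolating pieces $\Ainv_{ik}$, transport that deviation back to $\Theta_\Rep$ via the flow maps $\Delta_{ik}$ along the geodesic, and telescope the result to a difference of inverse Hironaka numbers. The paper resolves the obstacle you flag by a device you may find cleaner than summing the explicit half-plane formulas of \Cref{lem:formula_alpha_q}: it computes the constant Jacobian $K_k$ of $\Delta_{ik}$ in $(\tilde\alpha,\tilde\beta)$ coordinates, observes that the monodromy's turn in $\Ainv_{ik}$ is always by the fixed vector $w_k = 2\pi(m_i,-m_k)^T$, and checks the single relation $K_k w_k = (n_{ik}/n_{ji})\,w_i$. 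The telescopic product then collapses each turn to a pure gyration of angle $2\pi n_{ik}/(m_i m_k) = 2\pi(h_i^{-1}-h_k^{-1})$, and the geodesic sum gives $\varsigma_k$ without ever splitting into cases. One small correction: the monodromy-spoor claim is about the \emph{homotopy} class of $G_\theta$, so \Cref{lem:action_on_homology} is not the relevant input; the direct argument you also sketch (that $G_\theta(e)$ is homotopic rel endpoints to $k_P(e)$ followed by the absolute gyration) is what is needed, and is exactly what the paper does.
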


\begin{proof}
	Let $i,k \in \V$ be vertices in $\Gamma$ joined by an edge $i\to k$,
	and assume that there is an edge $j\to i$. If $i = 0$, then we use the
	virtual vertex $j=-1$.
	The map $\Delta_{ik}:\Dro_{ik} \dashrightarrow \Dro_{ji}$
has constant Jacobian
	matrix with respect to the coordinates
	$(\tilde\alpha_k, \tilde\beta_k)$ and $(\tilde\alpha_i, \tilde\beta_i)$.
	Denote this matrix by $K_k$.
	Using \Cref{lem:formula_Deltaiell,lem:formula_alpha_q,},
	and the changes of variables
	\[
	(\tilde\alpha_i, \tilde\beta_i)
	=
	(\alpha_i + b_i\beta_i, -\beta_i),\qquad
	(\alpha_i, \beta_{ik})
	=
	(\tilde\beta_k, \tilde\alpha_k),
	\]
	we find
	\[
	K_k
	=
	\left(
	\begin{matrix}
		1 & b_i \\
		0 & -1
	\end{matrix}
	\right)
	\left(
	\begin{matrix}
		1 & \frac{m_k}{m_i} + 
		\left(\frac{m_j}{m_i}-b_i\right)\frac{n_{ik}}{n_{ij}} \\
		0 & \frac{n_{ik}}{n_{ij}}
	\end{matrix}
	\right)
	\left(
	\begin{matrix}
		0 & 1 \\
		1 & 0
	\end{matrix}
	\right)
	=
	\left(
	\begin{matrix}
		\frac{m_k}{m_i} + \frac{m_j}{m_i}\frac{n_{ik}}{n_{ij}} & 1 \\
		-\frac{n_{ik}}{n_{ij}} & 0
	\end{matrix}
	\right).
	\]
	Let $P$ be a trajectory in $\Ainv$, and consider its image under $G$.
	If $P$ passes through $\Ainv_i$, then this part of the trajectory
	is mapped to another trajectory passing through $\Ainv_i$. If
	$P$ passes through $\Ainv_{ik}$, then its image in $\Ainv$
	makes a turn by the vector $-w_k / (m_im_k)$, where we set
	\[
	w_k
	=
	2\pi
	\left(
	\begin{matrix}
		m_i \\ -m_k
	\end{matrix}
	\right)
	\]
	in the coordinates $(\tilde\alpha_k, \tilde\beta_k)$ in $\Ainv_{ik}$,
	and then continues as a trajectory in $\Ainv_i$.
	Now, a direct computation gives
	\[
	K_k w_k = \frac{n_{ik}}{n_{ji}} w_i.
	\]
	Consider the unique geodesic
$0 \to i_1 \to \ldots \to i_{l-1} \to i_l$  with $i_0 = 0$, $i_{l-1}=i$
and $i_l = k$
	in the graph $\Gamma$. Then, the above formula gives a telescopic product
	\[
	K_{i_1}\cdots K_{i_{l-1}}K_{i_l} \frac{w_k}{m_im_k}
	= \frac{n_{ik}}{m_im_k n_{-1,0}} w_0
	= \frac{2\pi n_{ik}}{m_im_k n_{-1,0}}
	\left(
	\begin{matrix}
		0 \\ -m_0
	\end{matrix}
	\right)
	= \frac{n_{ik}}{m_im_k}
	\left(
	\begin{matrix}
		0 \\ -2\pi
	\end{matrix}
	\right).
	\]
Assume now that the turn that the image of $P$ makes in $\Ainv_{ik}$ does
not cross any unstable manifold. Then, by applying
the backwards flow of $\xiinv$ to this turn, we can see this as
a gyration by $-2\pi n_{ik}/(m_im_k)$ radians around a repeller. If the image
	does cross an unstable manifold, then, applying backwards flow by $\xiinv$
	to the turn in $\Ainv_{ik}$ ends up in a the same total gyration around
	repellers, with fast jumps along safe paths in between.
	
	If $P$ is a stable prong with an end point in a singularity of $\xiinv$ in
	$\Dro_{k,\theta}$, say $P = P^\pm_{k,a,\theta}[b]$ for some $a,b$, apply
	backwards flow to the whole of $G(P)$
(see \Cref{def:angled_ray} for notation). We end up with a total gyration
	around repellers by an angle
	\[
	-2\pi \sum_{s=1}^l \frac{n_{i_{s-1}i_s}}{m_{i_{s-1}} m_{i_s}}
	=
	-2\pi \sum_{s=1}^l
	\frac{c_{0,i_{s-1}}}{m_{i_{s-1}}}
	-
	\frac{c_{0,i_s}}{m_{i_s}}
	=
	h_k^{-1}
	-
	h_0^{-1}
	=
	-\varsigma_k.
	\]
	As a result, if $e$ is the edge in $\Sinv$ corresponding
	to the prong $P=P^\pm_{k,a,\theta}[b]$,
	then the image $G(P)$ is isotopic (fixing the end points)
	to the composition of the prong $P^\pm_{k,a,\theta}[b+1]$
	and a gyraton of length $\varsigma_k$.
	This is precisely the gyration $\delta_e$.
	Indeed, assume that we encounter another
	edge $e'$ in the spine $\Sinv_\theta$ along the gyration at time $t$,
	and that this edge $e'$ corresponds to some prong $P' = 
	P_{\ell,c,\theta}[d]$.
	There exists a unique $s$ such that
	\[
	\varsigma_k - \varsigma_{i_s} < t < \varsigma_k - \varsigma_{i_{s-1}}.
	\]
	If $\varsigma_{\ell} \geq \varsigma_{i_s}$, i.e. if
	$\varsigma_\ell > \varsigma_k - t$,
	then $G(P)$ already crosses
	the prong $P'$, and the gyration ignores the turn. Otherwise,
	the gyration takes the turn.
	
	As a result, we conclude that
	the data in \Cref{def:gyrodata} satisfies the absolute gyrograph property.
	Indeed, this follows from the fact that the composition of the prong
	$P_{k,a,\theta}[b+1]$ and the gyration obtained from
	$G(P_{k_a,\theta}[b])$ is well defined.
	
	Now, the relative gyrograph property is proved similarly.
	We start by considering a singularity $q_{i,a,\theta}[b]$.
	Let
$j_0 \to j_1 \to \ldots \to j_r$ with $j_0 = k$,
be the path induced by the prong
	$R^+_{i,a,\theta}[b]$, ending in some arrowhead vertex $a = j_r$.
	Using a similar argument as above, using backwards flow of $\xiinv$,
	shows that $G(R^+_{i,a,\theta})$ is isotopic to
	the concatenation of $R^\pm_{i,a,\theta}[b]$,
	then a safe walk from $q_{i,a,\theta}[b]$ until we reach $\Rep_\theta$,
	then a gyration,
	and then a safe walk which reaches $q_{i,a,\theta}[b+1]$, and does not pass
	through $\Rep_\theta$.
	This gyration has angle
	\[
	\sum_{s=0}^{r-1} h_{j_s}^{-1} - h_{j_{s+1}}^{-1}
	= h_i^{-1}
	= \varsigma^\vee_v
	\]
	since $h^{-1}_{j_r} = 0$, as $j_r$ is an arrowhead vertex.
	That this composition of paths is allowed proves the relative
	gyrograph property of this data.
	
It follows from this construction that the image by $G_\theta$ of
any edge is homotopic to the path which defines the monodromy spoor,
relative to its end-points.
As a result, the monodromy spoor is the homotopy type of the geometric
monodromy.

Furthermore, applying the variation map of $G_\theta$ to a dual edge $e^\vee$
gives precisely the variation spoor $v_e$.
\end{proof}

\subsection{An example with two Puiseux-pairs}
\label{ss:two_puiseux}

\begin{block}
Let us use the gyrograph structure on $\Sinv_\theta$ to calculate the
algebraic monodromy and variation map of the simplest branch
having two Puiseux pairs, given by the equation
\[
  f(x,y) = (y^2-x^3)^2 + x^5y.
\]
The minimal embedded resolution graph of $f$,
along with data, is as follows.

\begin{minipage}{.4\textwidth}
\begin{center}
\begin{picture}(0,0)%
\includegraphics{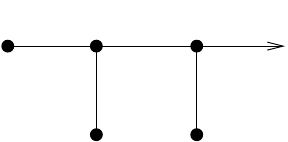}%
\end{picture}%
\setlength{\unitlength}{4144sp}%
\begin{picture}(2187,1079)(1291,-1239)
\put(1306,-331){\makebox(0,0)[lb]{\smash{\fontsize{12}{14.4}\normalfont $0$}}}
\put(1981,-331){\makebox(0,0)[lb]{\smash{\fontsize{12}{14.4}\normalfont $1$}}}
\put(2116,-1096){\makebox(0,0)[lb]{\smash{\fontsize{12}{14.4}\normalfont $2$}}}
\put(2881,-1096){\makebox(0,0)[lb]{\smash{\fontsize{12}{14.4}\normalfont $4$}}}
\put(2746,-331){\makebox(0,0)[lb]{\smash{\fontsize{12}{14.4}\normalfont $3$}}}
\end{picture}%

\label{fig:mnpolyg}
\end{center}
\end{minipage}
\begin{minipage}{.6\textwidth}
\begin{center}
\vspace{.4cm}
\begin{tabular}{c|c|c|c|c|c|c}
$i$ & $-b_i$ & $c_{0,i}$ & $m_i$ & $h_i$ &
$\varsigma_i$ & $\varsigma_i^\vee$ \\ \hline
0   & $-3$ & 1 & 4   &   4   &    0    &  1/4   \\ \hline
1   & $-2$ & 2 & 12  &   6   &  1/12   &  1/6   \\ \hline
2   & $-3$ & 1 & 6   &   6   &  1/12   &  1/6   \\ \hline
3   & $-1$ & 4 & 26  & 13/2  &  5/52   &  2/13  \\ \hline
4   & $-2$ & 2 & 13  & 13/2  &  5/52   &  2/13
\end{tabular}
\vspace{.4cm}
\end{center}
\end{minipage}
The gyrograph has four repellers
$\Rep = \{q_{0,-1,\theta}[1], q_{0,-1,\theta}[2],q_{0,-1,\theta}[3],
q_{0,-1,\theta}[4]\}$, as well as two groups of vertices
\[
  q_{1,0,\theta}[1],
  \ldots,
  q_{1,0,\theta}[6],\qquad
  q_{3,0,\theta}[1],
  \ldots,
  q_{3,0,\theta}[13],
\]
each permuted cyclically by the monodromy. 
In \cref{fig:gyro_ex} we see these vertices, along with
green and blue stable prongs, but with the repellers removed,
whereas in \cref{fig:gyrograph} we see a neighborhood
of $\Theta_\Rep$ in the blown up graph $\Blro_\R(\Sinv_\theta)$,
as well as the $12$ blue prongs coming from the first set of
vertices, and the $26$ green prongs coming from the second set of vertices.
The action on $\Theta_\Rep$ is induced by the explicit map
$G_0$ acting on a neighborhood of the repellers, in $\tilde U_0$.
In \cref{fig:gyrograph},
it sends each boundary component cyclically to the next one to the right.
By choosing coordinates, we can assume that any point is the end point
of a blue prong. We label that prong by $1$, and the rest of the prongs are
labelled cyclically, using the monodromy near the vertices $q_{1,0,\theta}[b]$.
The location of the rest of the blue prongs
is then determined, since the end-point of the next blue prong is
found by moving one step to the right, and going backwards by a gyration
of length $\varsigma_2 = 1/12$. This gyration takes no turns.
\begin{figure}[ht]
\begin{center}
\input{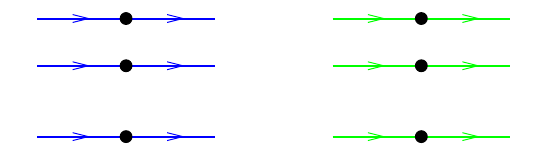_t}
\caption{The spine $\Sinv_\theta$, with repellers removed, and stable
prongs labelled cyclically. The black dots
are the points $q_{i,0,\theta}[b]$.}
\label{fig:gyro_ex}
\end{center}
\end{figure}
Now, choose any point on $\Theta_\Rep$ which is not the endpoint of a
blue prong. Then there exists some $\theta$ such that this is the end point
of a green prong. We label this prong by $1$, and the rest cyclically as above.
The rest of the green prongs, and their cyclic order
is then determined, again by jumping to the right, from any green prong,
and following a gyration of length
\[
  \varsigma_4 = \frac{1}{4} - \frac{2}{13} = \frac{5}{52}
\]
backwards, with the rule that if a blue prong
is encountered at time
\[
t<\frac{5}{52} - \frac{1}{12} = \frac{1}{78}
\]
then we make a turn, otherwise the turn is ignored. In figure
\cref{fig:gyrograph}, we have chosen a green prong at a point near
the first blue prong, so that the gyration starting at the first
green prong of length $5/52$ takes a turn along blue prong number $1$,
comes back at the blue prong number $7$, and meets up with the image
of the green prong number $26$ under $k_\Theta$ in $\Theta_{r_3}$.

Clearly, we have
\[
  B_{0,\theta}
  =
  \left(
  \begin{matrix}
  0 & 0 & 0 & 1 \\
  1 & 0 & 0 & 0 \\
  0 & 1 & 0 & 0 \\
  0 & 0 & 1 & 0 \\
  \end{matrix}
  \right)
\]
Choosing a suitable orientation in \cref{fig:gyro_ex}, and using
\cref{fig:gyrograph}, we find that the differential $d_{1,\theta}$ is
given by the matrix
\[
\left(
\begin{array}{cccccc|ccccccccccccc}
-1& 0 & 1 & 0 &-1 & 0 &  -1 & 0 & 0 & 1 &-1 & 0 & 0 & 1 &-1 & 0 & 0 & 1 &-1 \\
0 &-1 & 0 & 1 & 0 &-1 &   1 &-1 & 0 & 0 & 1 &-1 & 0 & 0 & 1 &-1 & 0 & 0 & 1 \\
1 & 0 &-1 & 0 & 1 & 0 &   0 & 1 &-1 & 0 & 0 & 1 &-1 & 0 & 0 & 1 &-1 & 0 & 0 \\
0 & 1 & 0 &-1 & 0 & 1 &   0 & 0 & 1 &-1 & 0 & 0 & 1 &-1 & 0 & 0 & 1 &-1 & 0 \\
\end{array}
\right)
\]
From what we have seen, the map $k_S$, with $S = \Sinv_\theta$,
permutes the blue prongs cyclically. The green prongs are similarly
permuted cyclically, with the exception that the prong labelled by
$26$ is mapped to prong $1$, and then blue prong $1$ and $7$.
Taking the segment in \cref{fig:gyro_ex} as a basis for
$C_{1,\theta}$, we find that $B_{1,\theta}$ is a block matrix.
Along the diagonal, we have matrices of size $6\times 6$ and $13\times 13$
which are similar to permutation matrices, but have a $-1$ in the corner,
since applying the monodromy 6 times to a blue segment returns it to itself
with the opposite orientation, and similarly for green segments.
In the upper off-diagonal block we have one nonzero entry, the rest is zero.
Indeed, the only turn taken by an absolute gyrations is that of the
green prong $26$, which turns along prong $7$, and then follows prong
$1$, to come back to green prong $1$. Taking into account orientation,
this gives a $-1$ in the upper right hand corner.
\[
  B_{1,\theta} =
\left(
\begin{array}{cccccc|ccccccccccccc}
0 & 0 & 0 & 0 & 0 & -1 &   0 & 0 & 0 & 0 & 0 & 0 & 0 & 0 & 0 & 0 & 0 & 0 & -1 \\
1 & 0 & 0 & 0 & 0 & 0 &   0 & 0 & 0 & 0 & 0 & 0 & 0 & 0 & 0 & 0 & 0 & 0 & 0 \\
0 & 1 & 0 & 0 & 0 & 0 &   0 & 0 & 0 & 0 & 0 & 0 & 0 & 0 & 0 & 0 & 0 & 0 & 0 \\
0 & 0 & 1 & 0 & 0 & 0 &   0 & 0 & 0 & 0 & 0 & 0 & 0 & 0 & 0 & 0 & 0 & 0 & 0 \\
0 & 0 & 0 & 1 & 0 & 0 &   0 & 0 & 0 & 0 & 0 & 0 & 0 & 0 & 0 & 0 & 0 & 0 & 0 \\
0 & 0 & 0 & 0 & 1 & 0 &   0 & 0 & 0 & 0 & 0 & 0 & 0 & 0 & 0 & 0 & 0 & 0 & 0 \\
\hline                    
0 & 0 & 0 & 0 & 0 & 0 &   0 & 0 & 0 & 0 & 0 & 0 & 0 & 0 & 0 & 0 & 0 & 0 & -1 \\
0 & 0 & 0 & 0 & 0 & 0 &   1 & 0 & 0 & 0 & 0 & 0 & 0 & 0 & 0 & 0 & 0 & 0 & 0 \\
0 & 0 & 0 & 0 & 0 & 0 &   0 & 1 & 0 & 0 & 0 & 0 & 0 & 0 & 0 & 0 & 0 & 0 & 0 \\
0 & 0 & 0 & 0 & 0 & 0 &   0 & 0 & 1 & 0 & 0 & 0 & 0 & 0 & 0 & 0 & 0 & 0 & 0 \\
0 & 0 & 0 & 0 & 0 & 0 &   0 & 0 & 0 & 1 & 0 & 0 & 0 & 0 & 0 & 0 & 0 & 0 & 0 \\
0 & 0 & 0 & 0 & 0 & 0 &   0 & 0 & 0 & 0 & 1 & 0 & 0 & 0 & 0 & 0 & 0 & 0 & 0 \\
0 & 0 & 0 & 0 & 0 & 0 &   0 & 0 & 0 & 0 & 0 & 1 & 0 & 0 & 0 & 0 & 0 & 0 & 0 \\
0 & 0 & 0 & 0 & 0 & 0 &   0 & 0 & 0 & 0 & 0 & 0 & 1 & 0 & 0 & 0 & 0 & 0 & 0 \\
0 & 0 & 0 & 0 & 0 & 0 &   0 & 0 & 0 & 0 & 0 & 0 & 0 & 1 & 0 & 0 & 0 & 0 & 0 \\
0 & 0 & 0 & 0 & 0 & 0 &   0 & 0 & 0 & 0 & 0 & 0 & 0 & 0 & 1 & 0 & 0 & 0 & 0 \\
0 & 0 & 0 & 0 & 0 & 0 &   0 & 0 & 0 & 0 & 0 & 0 & 0 & 0 & 0 & 1 & 0 & 0 & 0 \\
0 & 0 & 0 & 0 & 0 & 0 &   0 & 0 & 0 & 0 & 0 & 0 & 0 & 0 & 0 & 0 & 1 & 0 & 0 \\
0 & 0 & 0 & 0 & 0 & 0 &   0 & 0 & 0 & 0 & 0 & 0 & 0 & 0 & 0 & 0 & 0 & 1 & 0 \\
\end{array}
\right)
\]
Now, let us calculate the variation map $V_{1,\theta}$ in this basis,
and its dual.
If $e$ is a blue prong, then the relative gyration
$\delta^\vee_e$ starts at the edge $e$ and takes a turn along any green
edge, if it is encountered at time
\[
  t < \varsigma^\vee_1 - \varsigma^\vee_3
  =  \frac{1}{6} - \frac{2}{13}
  =  \frac{1}{78}
\]
This is what happens if $e$ is the first blue prong. The relative gyration
finds the green prong $21$, makes a fast safe walk to green prong
$8$, and then stops at blue prong $8$. From there, there is a safe
walk to $k_P(e)$, i.e. blue prong $2$.
To construct $v_e$, we must also consider the \emph{other} gyration,
following a safe walk from blue prong $1$ to blue prong $7$, then
a gyration of length $1/6$ which makes a turn at green prong $15$, and
then finishes gyrating from green prong $2$ to blue prong $2$.
This shows that the cycle $v_e$ induces the homological cycle given
by the first column in the matrix $V_{1,\theta}$ below. This includes
the first entries of the column equal to $1$, since $v_e$ crosses
the blue prong $1$, and its image by $k_P$, i.e. blue prong $2$.
It also makes two green jumps, which, by the orientation indicated,

If $e$ is a green prong, then
the relative gyration $\delta^\vee_e$ always makes a turn if it
encounters a blue edge. In this example, we see that this gyration actually
never encounters another green edge.
If $e$ is the first green edge,
then $v_e$ is the cycle which starts at $e$, makes a turn at the blue edge
labelled $1$ and makes a quick safe walk to the blue edge labelled $7$,
and continues gyrating from there until it meets the green edge labelled $15$.
From there, it takes a safe walk to the green edge $2$ which equals $k_P(e)$.
Now, concatenate this with the opposite of the path going the other way, which
starts at the
green edge $1$, follows a safe walk to green edge $14$, then gyrates
of length $\varsigma_2$ directly to green edge $2$, closing the cycle.
Since $v_e$ crosses the edges $e$ and $k_P(e)$, we find the value $1$
at position $7,7$ and $8,7$ in the matrix for $V_{1,\theta}$.
Since the gyration did a fast jump along the first blue edge, there is one
more $1$ in the seventh column of this matrix at position $1,7$.
Now, one verifies that the jumps made by the cylcles $v_e$, for $e$
green prongs labelled $1,\ldots,13$ give similar cycles described by the
columns in $V_{1,\theta}$ seen below.

\[
  V_{1,\theta} = 
\left(
\begin{array}{cccccc|ccccccccccccc}
1 & 0 & 0 & 0 & 0 &-1   & 1 & 0 & 0 & 0 & 0 & 0 &-1 & 0 & 0 & 0 & 0 & 0 & 0 \\
1 & 1 & 0 & 0 & 0 & 0   & 0 & 1 & 0 & 0 & 0 & 0 & 0 &-1 & 0 & 0 & 0 & 0 & 0 \\
0 & 1 & 1 & 0 & 0 & 0   & 0 & 0 & 1 & 0 & 0 & 0 & 0 & 0 &-1 & 0 & 0 & 0 & 0 \\
0 & 0 & 1 & 1 & 0 & 0   & 0 & 0 & 0 & 1 & 0 & 0 & 0 & 0 & 0 &-1 & 0 & 0 & 0 \\
0 & 0 & 0 & 1 & 1 & 0   & 0 & 0 & 0 & 0 & 1 & 0 & 0 & 0 & 0 & 0 &-1 & 0 & 0 \\
0 & 0 & 0 & 0 & 1 & 1   & 0 & 0 & 0 & 0 & 0 & 1 & 0 & 0 & 0 & 0 & 0 &-1 & 0 \\ \hline
0 & 0 & 0 & 0 & 0 & 0   & 1 & 0 & 0 & 0 & 0 & 0 & 0 & 0 & 0 & 0 & 0 & 0 &-1 \\
1 & 0 & 0 & 0 & 0 & 0   & 1 & 1 & 0 & 0 & 0 & 0 & 0 & 0 & 0 & 0 & 0 & 0 & 0 \\
0 & 1 & 0 & 0 & 0 & 0   & 0 & 1 & 1 & 0 & 0 & 0 & 0 & 0 & 0 & 0 & 0 & 0 & 0 \\
0 & 0 & 1 & 0 & 0 & 0   & 0 & 0 & 1 & 1 & 0 & 0 & 0 & 0 & 0 & 0 & 0 & 0 & 0 \\
0 & 0 & 0 & 1 & 0 & 0   & 0 & 0 & 0 & 1 & 1 & 0 & 0 & 0 & 0 & 0 & 0 & 0 & 0 \\
0 & 0 & 0 & 0 & 1 & 0   & 0 & 0 & 0 & 0 & 1 & 1 & 0 & 0 & 0 & 0 & 0 & 0 & 0 \\
0 & 0 & 0 & 0 & 0 & 1   & 0 & 0 & 0 & 0 & 0 & 1 & 1 & 0 & 0 & 0 & 0 & 0 & 0 \\
-1& 0 & 0 & 0 & 0 & 0   & 0 & 0 & 0 & 0 & 0 & 0 & 1 & 1 & 0 & 0 & 0 & 0 & 0 \\
0 &-1 & 0 & 0 & 0 & 0   & 0 & 0 & 0 & 0 & 0 & 0 & 0 & 1 & 1 & 0 & 0 & 0 & 0 \\
0 & 0 &-1 & 0 & 0 & 0   & 0 & 0 & 0 & 0 & 0 & 0 & 0 & 0 & 1 & 1 & 0 & 0 & 0 \\
0 & 0 & 0 &-1 & 0 & 0   & 0 & 0 & 0 & 0 & 0 & 0 & 0 & 0 & 0 & 1 & 1 & 0 & 0 \\
0 & 0 & 0 & 0 &-1 & 0   & 0 & 0 & 0 & 0 & 0 & 0 & 0 & 0 & 0 & 0 & 1 & 1 & 0 \\
0 & 0 & 0 & 0 & 0 &-1   & 0 & 0 & 0 & 0 & 0 & 0 & 0 & 0 & 0 & 0 & 0 & 1 & 1 \\
\end{array}
\right)
\]
We now choose a basis for the homology of $\Sinv_\theta$, as follows.
Since the blue edges $1$ and $2$, and the green edge $1$ form together
a spanning tree, we can complete any other edge in the graph as a cycle in
$\Sinv_\theta$. This way, we get a basis, and one verifies that in this
basis, the monodromy is given by the matrix
\[
\left(
\begin{array}{cccccccccccccccc}
 0&1&0&-1&0&-1&1& 0&0&-1&1& 0&0&-1&1& 0\\
 1&0&0& 0&0& 0&0& 0&0& 0&0& 0&0& 0&0& 0\\
 0&1&0& 0&0& 0&0& 0&0& 0&0& 0&0& 0&0& 0\\
 0&0&1& 0&0& 0&0& 0&0& 0&0& 0&0& 0&0& 0\\
 0&0&0& 0&1&-1&1&-1&1&-1&1&-1&1&-1&1&-1\\
 0&0&0& 0&1& 0&0& 0&0& 0&0& 0&0& 0&0& 0\\
 0&0&0& 0&0& 1&0& 0&0& 0&0& 0&0& 0&0& 0\\
 0&0&0& 0&0& 0&1& 0&0& 0&0& 0&0& 0&0& 0\\
 0&0&0& 0&0& 0&0& 1&0& 0&0& 0&0& 0&0& 0\\
 0&0&0& 0&0& 0&0& 0&1& 0&0& 0&0& 0&0& 0\\
 0&0&0& 0&0& 0&0& 0&0& 1&0& 0&0& 0&0& 0\\
 0&0&0& 0&0& 0&0& 0&0& 0&1& 0&0& 0&0& 0\\
 0&0&0& 0&0& 0&0& 0&0& 0&0& 1&0& 0&0& 0\\
 0&0&0& 0&0& 0&0& 0&0& 0&0& 0&1& 0&0& 0\\
 0&0&0& 0&0& 0&0& 0&0& 0&0& 0&0& 1&0& 0\\
 0&0&0& 0&0& 0&0& 0&0& 0&0& 0&0& 0&1& 0\\
\end{array}
\right)
\]
Correspondingly, take dual segments to any edge in the complement of
the spanning tree, which form a basis.
The variation map is then given by the matrix
\[
\left(
\begin{array}{cccccccccccccccc}
   1 & 0 & 0 & 0 &0 &-1 & 0 & 0 & 0& 0& 0& 1& 0& 0& 0& 0\\
   1 & 1 & 0 & 0 &0 & 0 &-1 & 0 & 0& 0& 0& 0& 1& 0& 0& 0\\
   0 & 1 & 1 & 0 &0 & 0 & 0 &-1 & 0& 0& 0& 0& 0& 1& 0& 0\\
   0 & 0 & 1 & 1 &0 & 0 & 0 & 0 &-1& 0& 0& 0& 0& 0& 1& 0\\
   0 & 0 & 0 & 0 &1 & 0 & 0 & 0 & 0& 0& 0& 0& 0& 0& 0& 0\\
   0 & 0 & 0 & 0 &1 & 1 & 0 & 0 & 0& 0& 0& 0& 0& 0& 0& 0\\
  -1 & 0 & 0 & 0 &0 & 1 & 1 & 0 & 0& 0& 0& 0& 0& 0& 0& 0\\
   0 &-1 & 0 & 0 &0 & 0 & 1 & 1 & 0& 0& 0& 0& 0& 0& 0& 0\\
   0 & 0 &-1 & 0 &0 & 0 & 0 & 1 & 1& 0& 0& 0& 0& 0& 0& 0\\
   0 & 0 & 0 &-1 &0 & 0 & 0 & 0 & 1& 1& 0& 0& 0& 0& 0& 0\\
   0 & 0 & 0 & 0 &0 & 0 & 0 & 0 & 0& 1& 1& 0& 0& 0& 0& 0\\
   0 & 0 & 0 & 0 &0 & 0 & 0 & 0 & 0& 0& 1& 1& 0& 0& 0& 0\\
   1 & 0 & 0 & 0 &0 & 0 & 0 & 0 & 0& 0& 0& 1& 1& 0& 0& 0\\
   0 & 1 & 0 & 0 &0 & 0 & 0 & 0 & 0& 0& 0& 0& 1& 1& 0& 0\\
   0 & 0 & 1 & 0 &0 & 0 & 0 & 0 & 0& 0& 0& 0& 0& 1& 1& 0\\
   0 & 0 & 0 & 1 &0 & 0 & 0 & 0 & 0& 0& 0& 0& 0& 0& 1& 1\\
\end{array}
\right)
\]

\begin{figure}[p]
	\begin{center}
		\scalebox{0.5}{ \input{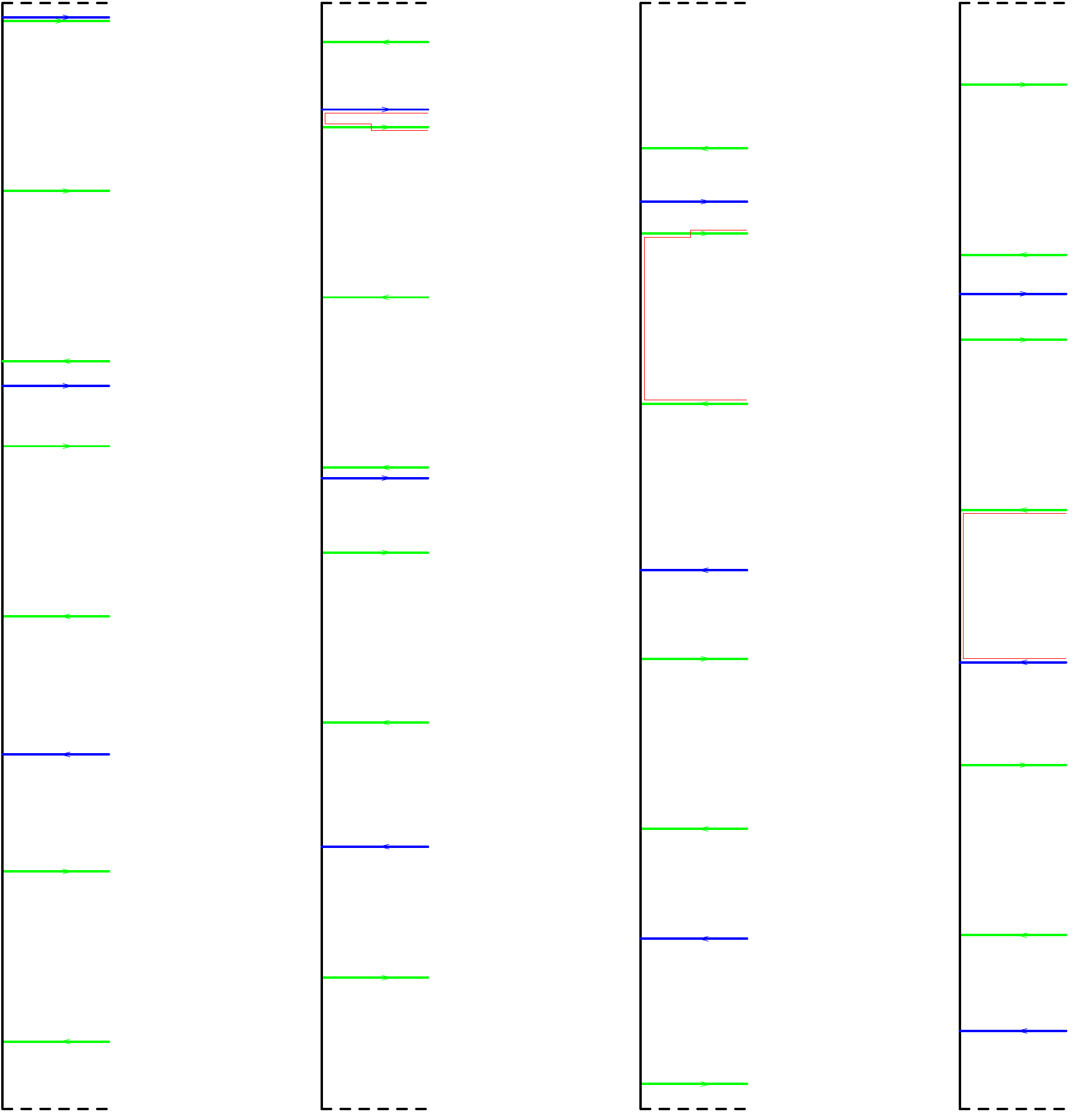_t}}
		\caption{A blown up gyrograph for a plane curve with two Puiseux 
		pairs. The red path is the variation map applied to a relative cycle
intersecting the green segment $2$ precisely once.}
		\label{fig:gyrograph}
	\end{center}
\end{figure}
\end{block}

\section{Software implementation}\label{s:software}
We have implemented the algorithms described in this paper in a Python script 
\texttt{var\_class.py} located in a Github repository. This script relies 
on \texttt{Singular} for resolution of singularities and \texttt{NetworkX} for 
graph manipulations. 

\subsection{Usage}
To use the software, ensure you have \texttt{Singular} installed and accessible 
in your path. The Python dependencies are \texttt{networkx}, 
\texttt{matplotlib}, \texttt{numpy}, and \texttt{sympy}.

The main function is \texttt{run\_polynomial(polynomial)}, which takes a 
polynomial string as input and returns the resolution graph and other data 
structures.

\subsection{Example}
Consider the polynomial $f(x,y) = (y^2+x^3)(x^2+y^3)$.
We run the following code to compute the resolution graph and the algebraic 
invariants.
We define a parameter
{\tt T = 1/100} corresponding to the angle $\theta = 2\pi / 100$.
In the code we scale the angle $\theta$ by $1/2\pi$ for simplification.
We compute the monodromy matrix $M$, the variation matrix $Var$, and the 
differential map $DI$ on the chain complex.
Then, we compute the Smith Normal Form of $DI$ to find a basis where the 
differential is diagonal. This allows us to identify the homology of the 
complex (the kernel of $DI$).
We denote by $V$ the change of basis matrix. The matrices $M_H$ and $\Var_H$ 
represent the restrictions of the monodromy and variation operators to the 
homology.

\begin{verbatim}
	import var_class
	import sympy as sp
	from fractions import Fraction
	G, H, Up, EN, mult = var_class.run_polynomial("(y^2+x^3)*(x^2+y^3)")
	var_class.draw_resolution_graph(G, H, mult=False, order=False, euler=True)
\end{verbatim}
This produces the resolution graph shown in \Cref{fig:example_graph}.

\begin{figure}[!ht]
	\centering
	\includegraphics[width=0.5\textwidth]{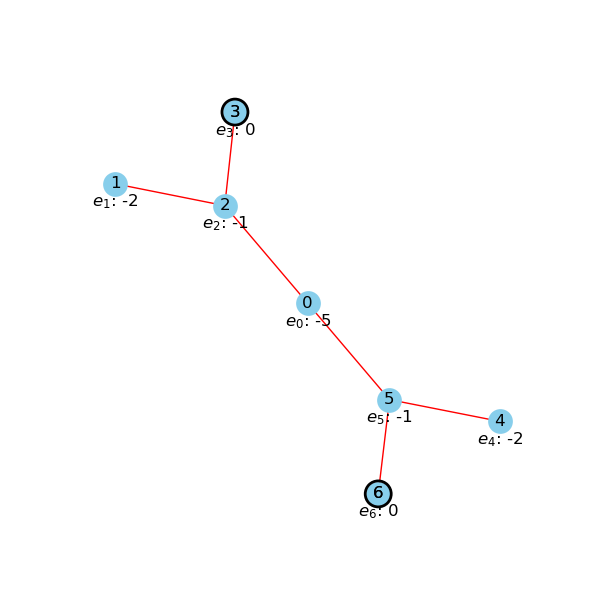}
	\caption{Resolution graph for $(y^2+x^3)(x^2+y^3)$ produced by the 
		software.}
	\label{fig:example_graph}
\end{figure}

\begin{verbatim}	
	# We define the parameters and compute the matrices
	T = Fraction(1, 100)
	M = var_class.monodromy_matrix(EN, G, Up, H, T)
	M = sp.Matrix(M)
	Var = var_class.variation_matrix(EN, G, Up, H, T)
	Var = sp.Matrix(Var)
	DI = var_class.differential_map(EN, G, Up, H, T)
	DI = sp.Matrix(DI)
	
	# We compute the Smith Normal Form to find the homology
	D, U, W = var_class.smith_normal_form_custom(DI)
	r = DI.rank()
	MH = (W**(-1)*M*W)[r:,r:]
	Var_H = ((W**-1)*Var*((V.transpose())**(-1)))[r:,r:]
\end{verbatim}

The script also computes the monodromy and variation matrices, as well as their 
homological counterparts.

\subsection*{Monodromy matrix on the complex}
The monodromy matrix $M$ acting on the chain complex.
\[ M = 
\left(\begin{array}{cccccccccccccc}0 & 0 & 0 & 1 & 0 & 0 & 0 & 0 & 0 & 0 & 0 & 
0 & 0 & 0\\1 & 0 & 0 & 0 & 0 & 0 & 0 & 0 & 0 & 0 & 0 & 0 & 0 & 0\\0 & 1 & 0 & 0 
& 0 & 0 & 0 & 0 & 1 & 0 & 0 & 0 & 0 & 1\\0 & 0 & 1 & 0 & 0 & 0 & 0 & 0 & 0 & 0 
& 0 & 0 & 0 & 0\\0 & 0 & 0 & 0 & 0 & 0 & 0 & 0 & -1 & 0 & 0 & 0 & 0 & 0\\0 & 0 
& 0 & 0 & 1 & 0 & 0 & 0 & 0 & 0 & 0 & 0 & 0 & 0\\0 & 0 & 0 & 0 & 0 & 1 & 0 & 0 
& 0 & 0 & 0 & 0 & 0 & 0\\0 & 0 & 0 & 0 & 0 & 0 & 1 & 0 & 0 & 0 & 0 & 0 & 0 & 
0\\0 & 0 & 0 & 0 & 0 & 0 & 0 & 1 & 0 & 0 & 0 & 0 & 0 & 0\\0 & 0 & 0 & 0 & 0 & 0 
& 0 & 0 & 0 & 0 & 0 & 0 & 0 & -1\\0 & 0 & 0 & 0 & 0 & 0 & 0 & 0 & 0 & 1 & 0 & 0 
& 0 & 0\\0 & 0 & 0 & 0 & 0 & 0 & 0 & 0 & 0 & 0 & 1 & 0 & 0 & 0\\0 & 0 & 0 & 0 & 
0 & 0 & 0 & 0 & 0 & 0 & 0 & 1 & 0 & 0\\0 & 0 & 0 & 0 & 0 & 0 & 0 & 0 & 0 & 0 & 
0 & 0 & 1 & 0\end{array}\right)
\]

\subsection*{Monodromy matrix after change of basis}
The monodromy matrix expressed in the basis given by the Smith Normal Form.
\[ W^{-1}MW = 
\left(\begin{array}{cccccccccccccc}0 & 1 & -1 & 0 & 0 & 0 & 0 & 0 & 0 & 0 & 0 & 
0 & 0 & 0\\-1 & 0 & 0 & 0 & 0 & 0 & 0 & 0 & 0 & 0 & 0 & 0 & 0 & 0\\0 & 0 & -1 & 
0 & 0 & 0 & 0 & 0 & 0 & 0 & 0 & 0 & 0 & 0\\0 & 0 & 0 & 0 & 1 & 0 & 0 & 0 & 0 & 
0 & 0 & 0 & 0 & 0\\1 & 0 & -1 & 1 & 0 & 0 & 1 & -1 & 1 & -1 & 1 & 0 & 0 & 0\\0 
& 0 & 1 & 0 & 0 & 1 & -1 & 1 & -1 & 1 & -1 & 1 & -1 & 1\\0 & 0 & 0 & 0 & 0 & 1 
& 0 & 0 & 0 & 0 & 0 & 0 & 0 & 0\\0 & 0 & 0 & 0 & 0 & 0 & 1 & 0 & 0 & 0 & 0 & 0 
& 0 & 0\\0 & 0 & 0 & 0 & 0 & 0 & 0 & 1 & 0 & 0 & 0 & 0 & 0 & 0\\0 & 0 & 0 & 0 & 
0 & 0 & 0 & 0 & 0 & 0 & 0 & 0 & 0 & -1\\0 & 0 & 0 & 0 & 0 & 0 & 0 & 0 & 0 & 1 & 
0 & 0 & 0 & 0\\0 & 0 & 0 & 0 & 0 & 0 & 0 & 0 & 0 & 0 & 1 & 0 & 0 & 0\\0 & 0 & 0 
& 0 & 0 & 0 & 0 & 0 & 0 & 0 & 0 & 1 & 0 & 0\\0 & 0 & 0 & 0 & 0 & 0 & 0 & 0 & 0 
& 0 & 0 & 0 & 1 & 0\end{array}\right)
\]

\subsection*{Monodromy matrix in homology}
The restriction of the monodromy matrix to the homology (kernel of the 
differential).
\[ M_H = 
\left(\begin{array}{ccccccccccc}0 & 1 & 0 & 0 & 0 & 0 & 0 & 0 & 0 & 0 & 0\\1 & 
0 & 0 & 1 & -1 & 1 & -1 & 1 & 0 & 0 & 0\\0 & 0 & 1 & -1 & 1 & -1 & 1 & -1 & 1 & 
-1 & 1\\0 & 0 & 1 & 0 & 0 & 0 & 0 & 0 & 0 & 0 & 0\\0 & 0 & 0 & 1 & 0 & 0 & 0 & 
0 & 0 & 0 & 0\\0 & 0 & 0 & 0 & 1 & 0 & 0 & 0 & 0 & 0 & 0\\0 & 0 & 0 & 0 & 0 & 0 
& 0 & 0 & 0 & 0 & -1\\0 & 0 & 0 & 0 & 0 & 0 & 1 & 0 & 0 & 0 & 0\\0 & 0 & 0 & 0 
& 0 & 0 & 0 & 1 & 0 & 0 & 0\\0 & 0 & 0 & 0 & 0 & 0 & 0 & 0 & 1 & 0 & 0\\0 & 0 & 
0 & 0 & 0 & 0 & 0 & 0 & 0 & 1 & 0\end{array}\right)
\]

\subsection*{Variation operator on the complex}
The variation operator $\textrm{Var}$ acting on the chain complex.
\[ \textrm{Var} = 
\left(\begin{array}{cccccccccccccc}1 & 0 & 0 & 1 & 0 & 0 & -1 & 0 & 0 & 0 & 0 & 
-1 & 0 & 0\\1 & 1 & 0 & 0 & 0 & 0 & 0 & -1 & 0 & 0 & 0 & 0 & -1 & 0\\0 & 1 & 1 
& 0 & -1 & 0 & 0 & 0 & 0 & -1 & 0 & 0 & 0 & 0\\0 & 0 & 1 & 1 & 0 & -1 & 0 & 0 & 
0 & 0 & -1 & 0 & 0 & 0\\0 & 0 & 0 & 0 & 1 & 0 & 0 & 0 & -1 & 0 & 0 & 0 & 0 & 
0\\0 & 0 & -1 & 0 & 1 & 1 & 0 & 0 & 0 & 0 & 0 & 0 & 0 & 0\\0 & 0 & 0 & -1 & 0 & 
1 & 1 & 0 & 0 & 0 & 0 & 0 & 0 & 0\\-1 & 0 & 0 & 0 & 0 & 0 & 1 & 1 & 0 & 0 & 0 & 
0 & 0 & 0\\0 & -1 & 0 & 0 & 0 & 0 & 0 & 1 & 1 & 0 & 0 & 0 & 0 & 0\\0 & 0 & 0 & 
0 & 0 & 0 & 0 & 0 & 0 & 1 & 0 & 0 & 0 & -1\\0 & 0 & -1 & 0 & 0 & 0 & 0 & 0 & 0 
& 1 & 1 & 0 & 0 & 0\\0 & 0 & 0 & -1 & 0 & 0 & 0 & 0 & 0 & 0 & 1 & 1 & 0 & 0\\-1 
& 0 & 0 & 0 & 0 & 0 & 0 & 0 & 0 & 0 & 0 & 1 & 1 & 0\\0 & -1 & 0 & 0 & 0 & 0 & 0 
& 0 & 0 & 0 & 0 & 0 & 1 & 1\end{array}\right)
\]

\subsection*{Variation after change of basis}
The variation operator expressed in the basis given by the Smith Normal Form.
\[ W^{-1}\textrm{Var}(W^{-1})^T = 
\left(\begin{array}{cccccccccccccc}0 & 0 & 0 & 0 & 0 & 0 & 0 & 0 & 0 & 0 & 0 & 
0 & 0 & 0\\0 & 0 & 0 & 0 & 0 & 0 & 0 & 0 & 0 & 0 & 0 & 0 & 0 & 0\\0 & 0 & 0 & 0 
& 0 & 0 & 0 & 0 & 0 & 0 & 0 & 0 & 0 & 0\\0 & 0 & 0 & 1 & 1 & -1 & 0 & 0 & 0 & 0 
& -1 & 0 & 0 & 0\\0 & 0 & 0 & 0 & 1 & 0 & 0 & 0 & 0 & -1 & 0 & 0 & 0 & 0\\0 & 0 
& 0 & 0 & -1 & 1 & 0 & 0 & 0 & 0 & 0 & 0 & 0 & 0\\0 & 0 & 0 & -1 & 0 & 1 & 1 & 
0 & 0 & 0 & 0 & 0 & 0 & 0\\0 & 0 & 0 & 0 & 0 & 0 & 1 & 1 & 0 & 0 & 0 & 0 & 0 & 
0\\0 & 0 & 0 & 0 & 0 & 0 & 0 & 1 & 1 & 0 & 0 & 0 & 0 & 0\\0 & 0 & 0 & 0 & 0 & 0 
& 0 & 0 & 0 & 1 & 0 & 0 & 0 & -1\\0 & 0 & 0 & 0 & -1 & 0 & 0 & 0 & 0 & 1 & 1 & 
0 & 0 & 0\\0 & 0 & 0 & -1 & 0 & 0 & 0 & 0 & 0 & 0 & 1 & 1 & 0 & 0\\0 & 0 & 0 & 
0 & 0 & 0 & 0 & 0 & 0 & 0 & 0 & 1 & 1 & 0\\0 & 0 & 0 & 0 & 0 & 0 & 0 & 0 & 0 & 
0 & 0 & 0 & 1 & 1\end{array}\right)
\]

\subsection*{Variation in homology}
The restriction of the variation operator to the homology.
\[ \textrm{Var}_H = 
\left(\begin{array}{ccccccccccc}1 & 1 & -1 & 0 & 0 & 0 & 0 & -1 & 0 & 0 & 0\\0 
& 1 & 0 & 0 & 0 & 0 & -1 & 0 & 0 & 0 & 0\\0 & -1 & 1 & 0 & 0 & 0 & 0 & 0 & 0 & 
0 & 0\\-1 & 0 & 1 & 1 & 0 & 0 & 0 & 0 & 0 & 0 & 0\\0 & 0 & 0 & 1 & 1 & 0 & 0 & 
0 & 0 & 0 & 0\\0 & 0 & 0 & 0 & 1 & 1 & 0 & 0 & 0 & 0 & 0\\0 & 0 & 0 & 0 & 0 & 0 
& 1 & 0 & 0 & 0 & -1\\0 & -1 & 0 & 0 & 0 & 0 & 1 & 1 & 0 & 0 & 0\\-1 & 0 & 0 & 
0 & 0 & 0 & 0 & 1 & 1 & 0 & 0\\0 & 0 & 0 & 0 & 0 & 0 & 0 & 0 & 1 & 1 & 0\\0 & 0 
& 0 & 0 & 0 & 0 & 0 & 0 & 0 & 1 & 1\end{array}\right)
\]

\subsection*{Verifications}
One verifies, using the above results, that the identity
\[M \textrm{Var}^T - \textrm{Var} = 0\]
holds.
Restricting to homology, we verify the well known identity
\[ 
M_H \textrm{Var}_H^T - \textrm{Var}_H = 0
\]
Now we consider the differential matrix $DI$:
\[ DI = 
\left(\begin{array}{cccccccccccccc}0 & 1 & 0 & -1 & 1 & -1 & 0 & 0 & 1 & 0 & 0 
& -1 & 1 & 0\\-1 & 0 & 1 & 0 & 0 & 1 & -1 & 0 & 0 & 1 & 0 & 0 & -1 & 1\\0 & -1 
& 0 & 1 & 0 & 0 & 1 & -1 & 0 & -1 & 1 & 0 & 0 & -1\\1 & 0 & -1 & 0 & -1 & 0 & 0 
& 1 & -1 & 0 & -1 & 1 & 0 & 0\end{array}\right)
\]
and verify that monodromy takes cycles to cycles ($DI \cdot M \cdot  V$). The 
columns 
corresponding to the kernel should be, and indeed are, zero:
\[ DI \cdot M \cdot V = 
\left(\begin{array}{cccccccccccccc}0 & 1 & -1 & 0 & 0 & 0 & 0 & 0 & 0 & 0 & 0 & 
0 & 0 & 0\\1 & 0 & 0 & 0 & 0 & 0 & 0 & 0 & 0 & 0 & 0 & 0 & 0 & 0\\0 & -1 & 0 & 
0 & 0 & 0 & 0 & 0 & 0 & 0 & 0 & 0 & 0 & 0\\-1 & 0 & 1 & 0 & 0 & 0 & 0 & 0 & 0 & 
0 & 0 & 0 & 0 & 0\end{array}\right)
\]

\bibliographystyle{alpha}
\bibliography{bibliography}	

\end{document}